\newcommand{\bbC}{{\mathbb{C}}}
\newcommand{\bbD}{{\mathbb{D}}}
\newcommand{\bbN}{{\mathbb{N}}}
\newcommand{\bbR}{{\mathbb{R}}}
\newcommand{\bbZ}{{\mathbb{Z}}}
\newcommand{\calE}{{\mathcal{E}}}
\newcommand{\calH}{{\mathcal H}}
\newcommand{\calI}{{\mathcal I}}
\newcommand{\calK}{{\mathcal K}}
\let\det=\undefined\DeclareMathOperator{\det}{det}
\newcommand{\dott}{\,\cdot\,}
\newcommand{\no}{\nonumber}
\newcommand{\lb}{\label}
\newcommand{\f}{\frac}
\newcommand{\tr}{\text{\rm{Tr}}}
\newcommand{\rank}{\text{\rm{rank}}}
\newcommand{\ran}{\text{\rm{ran}}}
\newcommand{\dom}{\text{\rm{dom}}}
\newcommand{\ac}{\text{\rm{ac}}}
\newcommand{\supp}{\text{\rm{supp}}}
\newcommand{\bi}{\bibitem}
\newcommand{\beq}{\begin{equation}}
\newcommand{\eeq}{\end{equation}}
\newcommand{\ba}{\begin{align}}
\newcommand{\ea}{\end{align}}
\newcommand{\veps}{\varepsilon}
\newcounter{smalllist}
\newenvironment{SL}{\begin{list}{{\rm\roman{smalllist})}}{%
\setlength{\topsep}{0mm}\setlength{\parsep}{0mm}\setlength{\itemsep}{0mm}%
\setlength{\labelwidth}{2em}\setlength{\leftmargin}{2em}\usecounter{smalllist}%
}}{\end{list}}
\DeclareMathOperator{\Ima}{Im}
\DeclareMathOperator*{\wlim}{w-lim}
\DeclareMathOperator{\diag}{diag}
\numberwithin{equation}{section}
\newtheorem{theorem}{Theorem}[section]
\newtheorem*{p2.1}{Proposition 2.1}
\newtheorem*{OQ2}{Open Question 11.2}
\newtheorem*{OQ3}{Open Question 11.3}
\newtheorem*{OQ4}{Open Question 11.4}
\newtheorem{proposition}[theorem]{Proposition}
\newtheorem{lemma}[theorem]{Lemma}
\newtheorem{corollary}[theorem]{Corollary}
\theoremstyle{definition}
\newtheorem{conjecture}[theorem]{Conjecture}
\theoremstyle{remark}
\newtheorem*{remark}{Remark}
\newtheorem*{remarks}{Remarks}
\newcommand{\abs}[1]{\lvert#1\rvert}
\begin{document}

\title[On the Koplienko Spectral Shift Function, I.\ Basics]{On the Koplienko Spectral Shift \\ Function, I. Basics}
\author[F.~Gesztesy, A.~Pushnitski, and B.~Simon]{Fritz~Gesztesy$^1$,
Alexander Pushnitski$^2$, and Barry Simon$^3$}

\thanks{$^1$ Department of Mathematics, University of Missouri, Columbia, MO 65211, USA.
E-mail: fritz@math.mizzou.edu. Supported in part by NSF Grant DMS-0405526}

\thanks{$^2$ Department of Mathematics, King's College London, Strand, London WC2R 2LS, England,
UK. E-mail: alexander.pushnitski@kcl.ac.uk. Supported in part by the Leverhulme Trust.}

\thanks{$^3$ Mathematics 253-37, California Institute of Technology, Pasadena, CA 91125, USA.
E-mail: bsimon@caltech.edu. Supported in part by NSF Grant DMS-0140592 and
U.S.--Israel Binational Science Foundation (BSF) Grant No.\ 2002068}

\thanks{Submitted to the Marchenko and Pastur birthday issue of Journal of Mathematical Physics, Analysis and Geometry}

\date{May 16, 2007}
\subjclass[2000]{Primary: 47A10, 81Q10. Secondary: 34B27, 47A40, 81Uxx.}
\keywords{Krein's spectral shift function, Koplienko's spectral shift function,
self-adjoint operators, trace class and Hilbert--Schmidt perturbations,
convexity properties, boundary values of (modified) Fredholm determinants.}

\begin{abstract}
We study the Koplienko Spectral Shift Function (KoSSF), which is distinct from
the one of Krein (KrSSF). KoSSF is defined for pairs $A,B$ with $(A-B)\in\calI_2$,
the Hilbert--Schmidt operators, while KrSSF is defined for pairs $A,B$ with
$(A-B)\in\calI_1$, the trace class operators. We review various aspects of
the construction of both KoSSF and KrSSF. Among our new results are: (i) that
any positive Riemann integrable function of compact support occurs as a KoSSF;
(ii) that there exist $A,B$ with $(A-B)\in\calI_2$ so $\det_2 ((A-z)(B-z)^{-1})$
does not have nontangential boundary values; (iii) an alternative definition of KoSSF
in the unitary case; and (iv) a new proof of the invariance of the a.c.\ spectrum
under $\calI_1$-perturbations that uses the KrSSF.
\end{abstract}

\maketitle

\section{Introduction} \lb{s1}

In 1941, Titchmarsh \cite{Ti41} (see also \cite[pp.\ 1564--1566]{DS88} for the result)
proved that if
$$
V\in L^1 ((0,\infty);dx), \, \text{ $V$ real-valued},
$$
and
\begin{align} \lb{1.1}
& H_{\theta} = -\f{d^2}{dx^2} + V,  \\
& \dom(H_{\theta})=\{f \in L^2((0,\infty);dx) \,|\, f, f' \in AC([0,R]) \text{ for all $R>0$};  \no \\
& \hspace*{1cm} \sin(\theta) f'(0)+ \cos(\theta) f(0)=0; \, (-f''+Vf)  \in L^2((0,\infty);dx)\},  \no
\end{align}
for some $\theta \in [0,\pi)$, then
$$
\sigma_\ac (H_{\theta})=[0,\infty).
$$
(Actually, he explicitly computed the spectral function in terms of the inverse square
of the modulus of the Jost function for positive energies.) It was later realized that
the a.c.\ invariance, that is,
\begin{equation} \lb{1.3}
\sigma_\ac (H_{\theta})=\sigma_\ac (H_{0,\theta})
\end{equation}
with
\begin{align} \lb{1.2}
& H_{0,\theta} = -\f{d^2}{dx^2},  \\
& \dom(H_{0,\theta})=\{f \in L^2((0,\infty);dx) \,|\, f, f' \in AC([0,R]) \text{ for all $R>0$};  \no \\
& \hspace*{3cm} \sin(\theta) f'(0)+ \cos(\theta) f(0)=0; \, f''  \in L^2((0,\infty);dx)\}, \no
\end{align}
is a special case of an invariance of the absolutely continuous spectrum,
$\sigma_\ac(\cdot)$ for the passage from $A$ to $B$ if $(A-B) \in \calI_1$, the trace class. 
In the present context of the pair $(H_{\theta},H_{0,\theta})$ one has 
$[(H_{\theta}+E)^{-1} - (H_{0,\theta} +E)^{-1}]\in\calI_1$ for $E>0$
sufficiently large. The abstract trace class result is associated with Birman
\cite{Bir62,Bir63}, Kato \cite{Kato1,Kato2}, and Rosenblum \cite{Ros57}.

Our original and continuing motivation is to find a suitable operator theoretic result
connected with the remarkable discovery of Deift--Killip \cite{DeiftK} that for the above
\eqref{1.1}/\eqref{1.2} case, one has \eqref{1.3} if one only assumes $V\in L^2
((0,\infty);dx)$. Note that $V\in L^2((0,\infty);dx)$ implies that
$$
[(H_{\theta}+E)^{-1}-(H_{0,\theta}+E)^{-1}]\in \calI_2,
$$
the Hilbert--Schmidt class. However, there is no totally general invariance result for
a.c.\ spectrum under non-trace class perturbations: It is a result of Weyl \cite{We09}  
and von Neumann \cite{vNeu} that given any self-adjoint $A$, there is a $B$ with 
pure point spectrum and $(A-B)\in\calI_2$. Kuroda \cite{Kur} extends this to $\calI_p$, 
$p\in(1,\infty)$, the trace ideals. Thus, we seek general operator criteria on when
$(A-B)\in\calI_2$ but \eqref{1.3} still holds.

We hope such a criterion will be found in the spectral shift function of Koplienko \cite{Kop}
(henceforth KoSSF), an object which we believe has not received the attention it deserves.
One of our goals in the present paper is to make propaganda for this object.

Two references for trace ideals we quote extensively are Gohberg--Krein \cite{GK} and
Simon \cite{STI}. We follow the notation of \cite{STI}. Throughout this paper all Hilbert
spaces are assumed to be complex and separable.

The KoSSF, $\eta (\lambda;A,B)$, is defined when $A$ and $B$ are bounded self-adjoint
operators satisfying $(A-B) \in\calI_2$, and is given by
\begin{equation} \lb{1.4}
\int_{\bbR} f''(\lambda) \eta(\lambda;A,B) \, d\lambda = \tr \bigg( f(A)-f(B)-
 \f{d}{d\alpha} f(B+\alpha (A-B))\bigg|_{\alpha=0}\bigg),
\end{equation}
where the right-hand side is sometimes (certainly if $(A-B)\in\calI_1$) the simpler-looking
\begin{equation} \lb{1.4a}
\tr (f(A)-f(B)-(A-B) f'(B)).
\end{equation}
$\eta$ has two critical properties: $\eta\in L^1(\bbR)$ and $\eta\geq 0$.
We mainly consider bounded $A,B$ here,
but see the remarks in Section~\ref{s9}.

Formula \eqref{1.4} requires some assumptions on $f$.
In Koplienko's original paper \cite{Kop} the case $f(x)=(x-z)^{-1}$ was considered
and then \eqref{1.4} was extended to the class of rational functions with poles off
the real axis. Later, Peller \cite{Pe05} extended the class of functions $f$ and
found sharp sufficient conditions on $f$
which guarantee that \eqref{1.4} holds. These conditions were stated
in terms of Besov spaces. Essentially, Peller's construction requires that
\eqref{1.4} hold for some sufficiently wide class of functions, so that this
class is dense in a certain Besov space, and then provides an
extension onto the whole of this Besov space.

We will use this aspect of Peller's work and will not worry about the classes of
$f$ in this paper. For the most part we will work with $f\in C^\infty(\bbR)$
and Peller's construction provides an extension to a wider function class.

The model for the KoSSF is, of course, the spectral shift function of Krein (henceforth KrSSF), 
denoted by $\xi (\lambda;A,B)$, and defined for $A,B$ with $(A-B)\in\calI_1$ by 
\begin{equation} \lb{1.5}
\int_{\bbR} \xi (\lambda; A,B) f'(\lambda) \, d\lambda = \tr (f(A)-f(B)).
\end{equation}
In the appendix, we recall a quick way to define $\xi$, its main properties and, most importantly,
present an  argument that shows how it can be used to derive the invariance of a.c.\ spectrum
without recourse to scattering theory.

As we will see in Section~\ref{s2}, it is easy to construct analogs of $\eta$ for any
$\calI_n$, $n\in\bbN$, but they are only tempered distributions. What makes $\eta$ different
is its positivity, which also implies it lies in $L^1(\bbR)$ (by taking $f$ suitably).
This positivity should be thought of as a general convexity result---something
hidden in Koplienko's paper \cite{Kop}.

One of our goals here is to emphasize this convexity. Another is to present a ``baby"
finite-dimensional version of the double Stieltjes operator integral of Birman--Solomyak
\cite{BS1,BS2,BS4}, essentially due to L\"owner \cite{Loe1}, whose contribution here
seems to have been overlooked.

In Section~\ref{s2}, we define $\eta$ when $(A-B)$ is trace class,
and in Section~\ref{s3},
we discuss the convexity result that is equivalent to positivity of $\eta$. In Section~\ref{s4},
we prove a lovely bound of Birman--Solomyak \cite{BS2,BS4}:
\begin{equation} \lb{1.6}
\|f(A)-f(B)\|_{\calI_2} \leq \|f'\|_\infty \|A-B\|_{\calI_2}.
\end{equation}
Here and in the remainder of this paper $\|\cdot\|_{\calI_p}$ denotes the norm in
the trace ideals $\calI_p$, $p\in[1,\infty)$. In Section~\ref{s5}, we use \eqref{1.6}
plus positivity of $\eta$ to complete the construction of $\eta$.

We want to emphasize an important distinction between the KrSSF and the KoSSF. The former
satisfies a chain rule
\begin{equation} \lb{1.7}
\xi(\dott; A,C) = \xi (\dott; A,B) + \xi (\dott; B,C),
\end{equation}
while $\eta$ instead satisfies a corrected chain rule
\begin{equation} \lb{1.7a}
\eta (\dott;A,C)=\eta (\dott;A,B) + \eta(\dott; B,C) + \delta \eta(\dott; A,B,C),
\end{equation}
where $\delta\eta$ satisfies
\begin{equation} \lb{1.8}
\int_{\bbR} g'(\lambda) \delta\eta (\lambda;A,B)\, d\lambda = \tr ((A-B)(g(B)-g(C))).
\end{equation}
(Here $g$ corresponds to $f'$ when comparing with \eqref{1.4}--\eqref{1.5}.)
It is in estimating \eqref{1.8} that \eqref{1.6} will be critical.

We view Sections~\ref{s2}--\ref{s5} as a repackaging in a prettier ribbon of
Koplienko's construction in \cite{Kop}. Section~\ref{s5a} explores what $\eta$'s can
occur. In Sections~\ref{new-s7} and \ref{s6}, we discuss the connection
to $\det_2(\cdot)$ and present a new result: an example of $(A-B)\in\calI_2$
where $\det_2 ((A-z)(B-z)^{-1})$ does not have nontangential limits to the real axis
a.e.\ This is in contradistinction to the KrSSF,
where $(A-B)\in\calI_1$ implies $\det((A-z)(B-z)^{-1})$ has a nontangential limit
$z\to\lambda$ for a.e.\ $\lambda\in\bbR$. The latter is a consequence of the formula
\begin{equation}
\log(\det((A-z)(B-z)^{-1})) = \int_{\bbR} (\lambda -z)^{-1} \xi(\lambda;A,B)\,
d\lambda,  \quad z\in\bbC\backslash \bbR,   \lb{1.9}
\end{equation}
since the right-hand side of \eqref{1.9} represents a difference of two Herglotz functions.

Sections~\ref{s9} and \ref{s10} discuss extensions of $\eta$ to the case of unbounded
operators with a trace class condition on the resolvents and to unitary operators.
Here a key is that $\eta$ is not determined until one makes a choice of interpolation.
Section~\ref{s11} discusses some conjectures.

In a future joint work, we will explore what one can learn about the KoSSF from Szeg\H{o}'s
theorem \cite{OPUC}, the work of Killip--Simon \cite{KS} and of Christ--Kiselev \cite{CK}.
This will involve the study of $\eta$ for suitable Schr\"odinger operators and Jacobi
and CMV matrices for perturbations in $L^p$, respectively, $\ell^p$, $p\in [1,2)$.

\medskip

We are indebted to E.~Lieb, K.~A.~Makarov, V.~V.~Peller, and M.~B.~Ruskai
for useful discussions. F.~G. and A.~P.~ wish to thank Gary Lorden and Tom Tombrello
for the hospitality of Caltech where some of this work was done. F.~G. gratefully
acknowledges a research leave for the academic year 2005/06 granted by the
Research Council and the Office of Research of the University of Missouri-Columbia.
A.~P. gratefully acknowledges financial support by the Leverhulme Trust.

\smallskip
It is a great pleasure to dedicate this paper to the birthdays of two giants of spectral theory:
Vladimir A.~Marchenko and Leonid A.~Pastur.

\section{The KoSSF $\eta(\dott;A,B)$ in the Trace Class Case} \lb{s2}

We begin with what can be said of $\calI_n$ perturbations, $n\in\bbN$, and then
turn to what is special for $n=1,2$. We note that our approach has common elements to
the one used by Dostani\'c \cite{Do93}.

\begin{proposition}\lb{P2.1} Let $A,B$ be bounded self-adjoint operators with
\begin{equation} \lb{2.1a}
A=B+X.
\end{equation}
For $\alpha,t\in\bbR$, define
\[
f_t(\alpha) =e^{it(B+\alpha X)}.
\]
Then $f_t(\alpha)$ is $C^\infty$ in $\alpha$ and
\begin{equation} \lb{2.1}
\f{d^k f_t}{d\alpha^k} =i^k \int_{\substack{0<s_j<t \\ \sum_{j=1}^{k} s_j <t}}
f_{s_1}(\alpha) X f_{s_2}(\alpha) \dots f_{s_k}(\alpha) X
f_{t-s_1- \dots -s_k}(\alpha)\, ds_1 \dots ds_k.
\end{equation}
If $X\in\calI_p$ for $p\geq k$, $k\in\bbN$, then $d^k f_t/d\alpha^k\in\calI_{p/k}$ and
\begin{equation} \lb{2.2}
\biggl\| \f{d^k f_t}{d\alpha^k}\biggr\|_{\calI_{p/k}} \leq \f{t^k}{k!}\, \|X\|_{\calI_p}^k.
\end{equation}
In particular, if $n\in\bbN$ and $X\in\calI_n$, then
\begin{gather}
g_t(A,B)\equiv \bigg(e^{itA}-e^{itB} - \sum_{k=1}^{n-1} \f{1}{k!}
\bigg(\f{d}{d\alpha}\bigg)^k f_t(\alpha)\bigg|_{\alpha=0}\bigg) \in\calI_1, \lb{2.3} \\
\|g_t(A,B)\|_{\calI_1} \leq \f{t^n}{n!}\, \|X\|_{\calI_n}^n.    \lb{2.4}
\end{gather}
\end{proposition}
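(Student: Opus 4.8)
The plan is to prove the three assertions in turn: the iterated Duhamel formula \eqref{2.1}, then the trace-ideal bound \eqref{2.2}, and finally the Taylor-remainder statement \eqref{2.3}--\eqref{2.4}. Throughout, the smoothness claim is cheap: $\alpha\mapsto B+\alpha X$ is a norm-linear family of bounded self-adjoint operators, so $f_t(\alpha)=e^{it(B+\alpha X)}$ is real-analytic in $\alpha$ in operator norm, which gives the $C^\infty$ statement and legitimizes differentiating under integral signs.

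First I would establish \eqref{2.1} by induction on $k$. For $k=1$ this is the standard Duhamel formula $\frac{df_t}{d\alpha}=i\int_0^t f_s(\alpha)\,X\,f_{t-s}(\alpha)\,ds$, obtained by differentiating the relation $\frac{d}{dt}f_t(\alpha)=i(B+\alpha X)f_t(\alpha)$ (equivalently, by reading off the coefficient of $\beta$ in the interaction-picture Dyson expansion of $f_t(\alpha+\beta)$ with free part $B+\alpha X$ and perturbation $\beta X$). For the inductive step I would differentiate the $k$-fold simplex integral under the integral sign; by the product rule only the $k+1$ unitary factors $f_{s_j}(\alpha)$ depend on $\alpha$, and applying the $k=1$ formula to each factor splits its duration and inserts one more copy of $X$. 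Reparametrizing the newly created time variable, and using that the $(k+1)$-simplex $\{s_j>0,\ \sum_j s_j<t\}$ is partitioned (up to a null set) according to which original subinterval was split, reassembles the $k+1$ terms into the single $(k+1)$-dimensional simplex integral, which is \eqref{2.1} with $k$ replaced by $k+1$.

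Next, for \eqref{2.2} I would fix $\alpha$ and estimate the integrand of \eqref{2.1} pointwise in $(s_1,\dots,s_k)$. It is a product of $k$ copies of $X\in\calI_p$ separated by unitary factors $f_{s_j}(\alpha)$, each of operator norm $1$. The H\"older inequality for trace ideals, with $\tfrac{1}{p/k}=k\cdot\tfrac1p$, bounds the $\calI_{p/k}$-norm of the integrand by $\|X\|_{\calI_p}^k$, uniformly in the $s_j$ and in $\alpha$. Here the hypothesis $p\geq k$ is essential: it forces $p/k\geq 1$, so $\calI_{p/k}$ is a genuine Banach space and the $\calI_{p/k}$-valued integral may be controlled by Minkowski's integral inequality, $\big\|\int(\cdots)\big\|_{\calI_{p/k}}\leq\int\|(\cdots)\|_{\calI_{p/k}}$. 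Multiplying the uniform pointwise bound by the simplex volume $t^k/k!$ yields \eqref{2.2}.

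Finally, \eqref{2.3}--\eqref{2.4} follow from Taylor's theorem with integral remainder applied to the ($C^\infty$) map $\alpha\mapsto f_t(\alpha)$ on $[0,1]$. Since $f_t(1)=e^{itA}$ and $f_t(0)=e^{itB}$, the operator $g_t(A,B)$ of \eqref{2.3} is exactly the $n$-th remainder $\frac{1}{(n-1)!}\int_0^1(1-\alpha)^{n-1}\frac{d^n f_t}{d\alpha^n}\big|_{\alpha}\,d\alpha$. Applying \eqref{2.2} with $k=p=n$ (so $\calI_{p/k}=\calI_1$) bounds the integrand in $\calI_1$ uniformly by $\frac{t^n}{n!}\|X\|_{\calI_n}^n$, and integrating the weight $(1-\alpha)^{n-1}$ even improves the constant by a factor $\frac{1}{(n-1)!}\int_0^1(1-\alpha)^{n-1}d\alpha=\frac{1}{n!}$, a fortiori giving \eqref{2.4}. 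I expect the main obstacle to be the rigorous justification of the inductive step for \eqref{2.1}: the simplex reparametrization and the Duhamel manipulations must be carried out and controlled in the correct topology (operator norm in general, the ideal norms once \eqref{2.2} is available). The trace-ideal bookkeeping in the second step---in particular tracking the requirement $p\geq k$ so as to remain in a normed rather than merely quasi-normed ideal---is the other point demanding care, but it is routine once H\"older and Minkowski are in hand.
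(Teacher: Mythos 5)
Your inductive step rests on a claim that is false, and it is exactly the delicate point you yourself flagged. When you differentiate the $k$-fold simplex integral and split the factor $f_{s_j}$ by the $k=1$ Duhamel formula, the resulting $(k+1)$-variable integral, after the reparametrization you describe, runs over the \emph{whole} simplex $\{s_i>0,\ \sum_{i=1}^{k+1}s_i<t\}$: the final time decomposition retains no record of which subinterval was split, so the $k+1$ terms produced by the product rule are \emph{not} pieces of a partition---each one separately equals the full $(k+1)$-simplex integral. The induction therefore yields
\begin{equation*}
\f{d^k f_t}{d\alpha^k} \,=\, k!\, i^k \int_{\substack{0<s_j<t \\ \sum_{j=1}^{k} s_j <t}}
f_{s_1}(\alpha)\, X\, f_{s_2}(\alpha) \dots f_{s_k}(\alpha)\, X\,
f_{t-s_1-\dots-s_k}(\alpha)\, ds_1 \dots ds_k,
\end{equation*}
i.e., \eqref{2.1} with an extra factor $k!$, and correspondingly the bound in \eqref{2.2} must read $t^k\,\|X\|_{\calI_p}^k$. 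The scalar case $B=0$, $X=x$ shows this is not a quibble: there $d^kf_t/d\alpha^k=(itx)^k e^{it\alpha x}$, while the integral in \eqref{2.1} equals $i^k x^k (t^k/k!)\, e^{it\alpha x}$, so \eqref{2.1}--\eqref{2.2} as printed are false for $k\geq 2$. This is evidently a typo in the statement itself (the paper's own proof is a few-line sketch that never writes out the induction, so it never confronts the factor); your partition claim manufactures the missing $k!$ rather than exposing it.

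The same factor resurfaces at the end of your argument and should have been a warning sign: you assert that the integral form of the Taylor remainder improves \eqref{2.4} to $\tfrac{t^n}{(n!)^2}\|X\|_{\calI_n}^n$. That bound is impossible, since in the scalar example $g_t=\sum_{k\geq n}(itx)^k/k!$, so $\|g_t\|$ divided by $\tfrac{t^n}{n!}\abs{x}^n$ tends to $1$ as $t\to 0$ and \eqref{2.4} is sharp. With the corrected derivative formula, your (correct) H\"older--Minkowski step gives $\|d^nf_t/d\alpha^n\|_{\calI_1}\leq t^n\|X\|_{\calI_n}^n$ uniformly in $\alpha$, and the integral remainder then gives exactly \eqref{2.4}: the factors $k!$ cancel against the $1/k!$ in the Taylor polynomial in \eqref{2.3} and against the $1/n!$ coming from $\frac{1}{(n-1)!}\int_0^1(1-\alpha)^{n-1}\,d\alpha$. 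Thus \eqref{2.3}--\eqref{2.4}---the only parts of the proposition used later in the paper---are true as stated, and your overall architecture (Duhamel for $k=1$, induction on $k$, H\"older plus Minkowski, Taylor with remainder) coincides with the paper's; the genuine gap is solely the combinatorial bookkeeping in the inductive step, which both you and the printed statement get wrong by $k!$.
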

\begin{proof} For $k=1$, \eqref{2.1} comes from taking limits in DuHamel's formula
\[
e^C-e^D =\int_0^1 e^{\beta C} (C-D) e^{(1-\beta)D}\, d\beta.
\]
The general $k$ case then
follows by induction.

\eqref{2.1} implies \eqref{2.2} by H\"older's inequality for operators (see
\cite[p.\ 21]{STI}). \eqref{2.3} is then Taylor's theorem with remainder and \eqref{2.4}
follows from \eqref{2.2}.
\end{proof}

\begin{theorem}\lb{T2.2} Let $A,B$ be bounded self-adjoint operators such that
$X=(A-B)\in\calI_n$ for some $n\in\bbN$. Let $f$ be of compact support
with $\widehat f$, its Fourier transform, satisfying
\begin{equation} \lb{2.5}
\int_{\bbR} (1+\abs{k})^n \abs{\widehat f(k)}\, dk <\infty.
\end{equation}
Then,
\begin{equation} \lb{2.6x}
\bigg( f(A)-f(B) - \sum_{j=1}^{n-1} \frac1{k!}\bigg( \f{d}{d\alpha}\bigg)^k
f(B+\alpha X)\bigg|_{\alpha=0}\bigg) \in\calI_1,
\end{equation}
and there is a distribution $T$ with
\begin{equation} \lb{2.7a}
\tr\bigg(f(A)-f(B) - \sum_{j=1}^{n-1} \frac1{k!}\bigg( \f{d}{d\alpha}\bigg)^k
f(B+\alpha X)\bigg|_{\alpha=0} \bigg)=\int_{\bbR} T(\lambda) f^{(n)}(\lambda)\, d\lambda.
\end{equation}
Moreover, the distribution $T$ is such that $\widehat T \in L^\infty(\bbR;dt)$.
\end{theorem}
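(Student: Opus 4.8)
The plan is to push everything through the Fourier transform so that the statement reduces to the factorial bound \eqref{2.4} of Proposition~\ref{P2.1}. Writing $f(x)=(2\pi)^{-1/2}\int_{\bbR}\widehat f(t)e^{itx}\,dt$ and substituting the operators $A$ and $B$ for $x$, I express the bracketed operator in \eqref{2.6x} as a single integral
\[
f(A)-f(B)-\sum_{k=1}^{n-1}\frac{1}{k!}\Big(\frac{d}{d\alpha}\Big)^k f(B+\alpha X)\Big|_{\alpha=0}
=\frac{1}{\sqrt{2\pi}}\int_{\bbR}\widehat f(t)\,g_t(A,B)\,dt ,
\]
where $g_t(A,B)$ is exactly the operator of \eqref{2.3}; here one differentiates $f(B+\alpha X)=(2\pi)^{-1/2}\int\widehat f(t)f_t(\alpha)\,dt$ under the integral $k$ times at $\alpha=0$, each term converging in $\calI_{n/k}$ by \eqref{2.2} and \eqref{2.5}. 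By \eqref{2.3}--\eqref{2.4} we have $g_t(A,B)\in\calI_1$ with $\|g_t(A,B)\|_{\calI_1}\le \frac{|t|^n}{n!}\|X\|_{\calI_n}^n$, so the integrand is bounded in $\calI_1$-norm by $|\widehat f(t)|\,|t|^n\|X\|_{\calI_n}^n/n!$. Hypothesis \eqref{2.5} then forces $\int_{\bbR}\|\widehat f(t)g_t(A,B)\|_{\calI_1}\,dt<\infty$, so the integral converges as a Bochner integral in the Banach space $\calI_1$; this proves \eqref{2.6x}.

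Next I take the trace. Since $S\mapsto\tr(S)$ is a bounded linear functional on $\calI_1$, it commutes with the Bochner integral, giving
\[
\tr\Big(f(A)-f(B)-\cdots\Big)=\frac{1}{\sqrt{2\pi}}\int_{\bbR}\widehat f(t)\,m(t)\,dt,
\qquad m(t):=\tr\big(g_t(A,B)\big).
\]
The bound \eqref{2.4} passes to the trace as $|m(t)|\le\|g_t(A,B)\|_{\calI_1}\le\frac{|t|^n}{n!}\|X\|_{\calI_n}^n$, and $t\mapsto m(t)$ is continuous (from norm-continuity of $t\mapsto g_t(A,B)$ in $\calI_1$). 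The crucial consequence is that $m(t)/t^n$ is \emph{bounded}, $|m(t)/t^n|\le \|X\|_{\calI_n}^n/n!$ for all $t$; the vanishing of $m$ to order $n$ at the origin is precisely the Taylor-remainder structure built into $g_t$.

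I now define $T$ through its Fourier transform by $\widehat T(t)=c\,m(t)/(it)^n$, with the constant $c$ and a possible reflection $t\mapsto -t$ fixed by the Fourier convention. Because $f$ has compact support and satisfies \eqref{2.5}, we have $\widehat{f^{(n)}}(t)=(it)^n\widehat f(t)\in L^1(\bbR)$, so the Parseval pairing $\langle T,f^{(n)}\rangle=\int_{\bbR}\widehat T(t)\,\widehat{f^{(n)}}(-t)\,dt$ is absolutely convergent, since $\widehat T\in L^\infty$ and $\widehat{f^{(n)}}\in L^1$. In this pairing the factor $(it)^n$ coming from $\widehat{f^{(n)}}$ cancels the $(it)^{-n}$ in $\widehat T$, and what remains is exactly $\frac{1}{\sqrt{2\pi}}\int_{\bbR}\widehat f(t)\,m(t)\,dt$, i.e.\ the trace computed above; this is \eqref{2.7a}. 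Finally, the asserted regularity is immediate from the boundedness of $m(t)/t^n$: $|\widehat T(t)|=|c|\,|m(t)|/|t|^n\le |c|\,\|X\|_{\calI_n}^n/n!$, so $\widehat T\in L^\infty(\bbR;dt)$.

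The genuinely substantive input is entirely contained in Proposition~\ref{P2.1}: the factorial bound \eqref{2.4} does triple duty, providing (i) absolute convergence of the Bochner integral, (ii) the $L^1$-in-$t$ control that legitimizes interchanging trace and integral, and (iii) the order-$n$ vanishing of $m(t)$ that renders $m(t)/t^n$ bounded and hence $\widehat T\in L^\infty$. The only points I would verify but not belabor are the norm-continuity (hence strong measurability) of $t\mapsto g_t(A,B)$ in $\calI_1$, needed so that the Bochner integral and the function $m(t)$ are well defined, and the bookkeeping of Fourier constants; neither is a conceptual obstacle.
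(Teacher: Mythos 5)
Your proof is correct and follows essentially the same route as the paper's: both expand $f$ through its Fourier transform, apply the $\calI_1$ estimate \eqref{2.4} of Proposition~\ref{P2.1} to obtain \eqref{2.6x} together with the bound $C\int_{\bbR}|t|^n|\widehat f(t)|\,dt = C\int_{\bbR}|\widehat{f^{(n)}}(t)|\,dt$, and deduce from this that $\widehat T\in L^\infty(\bbR;dt)$. The only difference is cosmetic: you construct $\widehat T(t)=c\,m(t)/(it)^n$ explicitly from $m(t)=\tr(g_t(A,B))$, whereas the paper reaches the same conclusion by the duality $(L^1)^*=L^\infty$ applied to the bound $|T(h)|\leq C\|\widehat h\|_{L^1}$ on the trace functional.
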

\begin{proof} This is immediate from the estimates in Proposition~\ref{P2.1} and
\[
f(A)=(2\pi)^{-1/2} \int_{\bbR} \widehat f(t)\, e^{itA}\, dt.
\]

For, by \eqref{2.4}, we have
\begin{equation}
\|\text{LHS of \eqref{2.6x}}\|_{\calI_1} \leq C\int_{\bbR} \abs{t}^n \abs{\widehat f(t)}\,
dt = C \int_{\bbR} \abs{\widehat{f^{(n)}}(t)}\, dt.
\end{equation}
Thus, \eqref{2.7a} defines a distribution $T$ with
\[
\abs{T(f)} \leq C \int_{\bbR} \abs{\widehat f(t)}\, dt,
\]
so $\widehat T$ is a function in $L^\infty(\bbR;dt)$.
\end{proof}

Notice that, as we have seen,
\[
\f{d}{d\alpha}\, e^{it(B+\alpha X)}\bigg|_{\alpha=0} =
i\int_0^t e^{i \beta B} Xe^{i (t-\beta) B}\, d\beta
\]
so that formally,
\[
 \tr\bigg(\f{d}{d\alpha}\, f(B+\alpha X)\bigg|_{\alpha=0} -Xf'(B)\bigg) =0,
 \]
and formally, \eqref{1.4a} can replace the right-hand side of \eqref{1.4}. This can be
proven if the commutator $[B,X]=[B,A]$ is trace class.

Dostani\'c \cite[Theorem~2.9]{Do93} essentially proves that $T$ is the derivative of an 
$L^2(\bbR;d\lambda)$-function.

A key point for us is that in the case $n=2$, the distribution $T$ is given by an
$L^1(\bbR;d\lambda)$-function. We start
this construction here by considering the trace class case.

\begin{lemma}\lb{L2.3} Let $B$ be a self-adjoint operator and let $X=(A-B)\in\calI_1$.
Then there is a {\rm{(}}complex\,{\rm{)}} measure $d\mu_{B,X}$ on $\bbR$ such that
for any bounded Borel function, $f$,
\begin{equation} \lb{2.6}
\tr (Xf(B))=\int_{\bbR} f(\lambda)\, d\mu_{B,X}(\lambda).
\end{equation}
Equation \eqref{2.6} defines $d\mu_{B,X}$ uniquely.
\end{lemma}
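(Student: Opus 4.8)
The plan is to construct the measure $d\mu_{B,X}$ by first treating the case where $X$ is a rank-one operator and then extending by the trace-class structure of $X$. Since $X \in \calI_1$ is self-adjoint (or can be split into self-adjoint real and imaginary parts), I would invoke its canonical decomposition $X = \sum_{j} \lambda_j \langle \varphi_j, \cdot\,\rangle \varphi_j$ with $\{\varphi_j\}$ orthonormal and $\sum_j \abs{\lambda_j} = \|X\|_{\calI_1} < \infty$. For each rank-one piece $X_j = \lambda_j \langle \varphi_j, \cdot\,\rangle \varphi_j$ one computes directly
\[
\tr(X_j f(B)) = \lambda_j \langle \varphi_j, f(B)\varphi_j\rangle = \lambda_j \int_{\bbR} f(\lambda)\, d\langle \varphi_j, E_B(\lambda)\varphi_j\rangle,
\]
where $E_B$ is the projection-valued spectral measure of $B$. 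Thus the natural candidate is
\[
d\mu_{B,X}(\lambda) = \sum_j \lambda_j \, d\langle \varphi_j, E_B(\lambda)\varphi_j\rangle.
\]

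The key steps, in order: first, verify that this series of measures converges in total variation. Each $d\langle \varphi_j, E_B(\cdot)\varphi_j\rangle$ is a positive measure of total mass $\|\varphi_j\|^2 = 1$, so the total variation of the $j$-th term is at most $\abs{\lambda_j}$, and $\sum_j \abs{\lambda_j} < \infty$ gives absolute convergence in the Banach space of finite complex measures. Hence $d\mu_{B,X}$ is a well-defined finite complex measure with $\abs{d\mu_{B,X}}(\bbR) \leq \|X\|_{\calI_1}$. Second, verify \eqref{2.6} by integrating $f$ against $d\mu_{B,X}$, interchanging the (absolutely convergent) sum with the integral, and recognizing the result as $\sum_j \tr(X_j f(B)) = \tr(X f(B))$; the interchange of trace and sum is justified since $\sum_j \|X_j\|_{\calI_1} = \|X\|_{\calI_1} < \infty$ and $f(B)$ is bounded. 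Third, for uniqueness, observe that \eqref{2.6} holds in particular for $f(\lambda) = e^{it\lambda}$, so the Fourier transform of $d\mu_{B,X}$ is determined by $\tr(X e^{itB})$; since a finite complex measure on $\bbR$ is determined by its Fourier transform, $d\mu_{B,X}$ is unique.

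The main obstacle, though a mild one, is organizing the general (non-self-adjoint) $X \in \calI_1$ case cleanly: $X$ need not be self-adjoint, so I cannot directly use a spectral-type decomposition with a single orthonormal basis. The cleanest remedy is to use the singular value (Schmidt) decomposition $X = \sum_j s_j \langle \psi_j, \cdot\,\rangle \phi_j$ with $\{\psi_j\}, \{\phi_j\}$ orthonormal systems and $\sum_j s_j = \|X\|_{\calI_1}$, so that each rank-one term contributes $s_j \langle \phi_j, f(B)\psi_j\rangle = s_j \int_{\bbR} f\, d\langle \phi_j, E_B(\cdot)\psi_j\rangle$, a complex measure of total variation at most $s_j$ by Cauchy--Schwarz applied to the polarized spectral measure. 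Summing again converges in total variation, and the rest proceeds as above. The only care needed is the standard fact that $\langle \phi, E_B(\cdot)\psi\rangle$ is a complex measure of total variation at most $\|\phi\|\,\|\psi\|$, which I would cite rather than prove.
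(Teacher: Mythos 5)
Your proposal is correct and takes essentially the same route as the paper: the paper likewise uses the canonical (Schmidt) decomposition $X=\sum_j \mu_j(X)\langle \varphi_j,\cdot\,\rangle\psi_j$ together with the polarized spectral measures $d\mu_{B;\varphi,\psi}$ obeying $\|\mu_{B;\varphi,\psi}\|\leq\|\varphi\|\,\|\psi\|$, and defines $d\mu_{B,X}=\sum_j \mu_j(X)\,d\mu_{B;\varphi_j,\psi_j}$, convergent in total variation since $\sum_j\mu_j(X)=\|X\|_{\calI_1}$. The only cosmetic differences are your Fourier-transform uniqueness argument (the paper instead observes that \eqref{2.6} determines the integral of every continuous function) and a harmless swap of the roles of the two orthonormal systems in the rank-one trace computation.
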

\begin{proof} Equation \eqref{2.6} yields uniqueness of the measure $d\mu_{B,X}$ since it
defines the integral for all continuous functions $f$. Regarding existence of $d\mu_{B,X}$,
the spectral theorem asserts the existence of measures $d\mu_{B;\varphi,\psi}$, such that
\begin{equation} \lb{2.7}
\langle\varphi, f(B)\psi\rangle = \int_{\bbR} f(\lambda)\, d\mu_{B;\varphi,\psi}(\lambda)
\end{equation}
and
\begin{equation} \lb{2.8}
\|\mu_{B;\varphi,\psi}\|\leq\|\varphi\|\, \|\psi\|.
\end{equation}

The canonical decomposition for $X$ (see \cite[Sect.\ 1.2]{STI}) says (with $N$ finite or infinite)
\begin{equation} \lb{2.9}
X=\sum_{j=1}^N \mu_j(X) \langle \varphi_j, \dott\rangle \psi_j,
\end{equation}
where $\{\psi_j\}_{j=1}^N$ and $\{\varphi_j\}_{j=1}^N$ are orthonormal sets, $\mu_j>0$, and
\begin{equation} \lb{2.10}
\sum_{j=1}^N \mu_j(X) = \|X\|_{\calI_1}.
\end{equation}

Define
\begin{equation} \lb{2.11}
d\mu_{B,X} =\sum_{j=1}^N \mu_j(X)\, d\mu_{B;\varphi_j,\psi_j}
\end{equation}
which converges by \eqref{2.8} and \eqref{2.10}.
\end{proof}

\begin{theorem}\lb{T2.4} Let $A,B$ be bounded operators and $X=(A-B)\in\calI_1$. Let
$\xi (\lambda;A,B)$ be the KrSSF. Let $d\mu_{B,X}$ be given by \eqref{2.6}. Define
\begin{equation} \lb{2.12}
\eta(\lambda;A,B)\equiv\mu_{B,X}((-\infty,\lambda))-\int_{-\infty}^\lambda
\xi(\lambda';A,B)\, d\lambda',  \quad \lambda\in\bbR.
\end{equation}
Then $\eta(\dott;A,B)$ has compact support and for any
$f\in C^\infty(\bbR)$, we have
\begin{equation} \lb{2.13}
\tr\bigg(f(A)-f(B) - \f{d}{d\alpha}\, f(B+\alpha X)\bigg|_{\alpha=0}\bigg)
= \int_{\bbR} f''(\lambda) \eta(\lambda;A,B)\, d\lambda.
\end{equation}
\end{theorem}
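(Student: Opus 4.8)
The plan is to reduce the left-hand side of \eqref{2.13} to two trace formulas that are already available---Krein's formula \eqref{1.5} and Lemma~\ref{L2.3}---and then recover the right-hand side by a single integration by parts. Throughout I may assume $f$ is Schwartz (equivalently, that $\widehat f$ satisfies \eqref{2.5} with $n=1$): each of $f(A)$, $f(B)$, and $\f{d}{d\alpha}f(B+\alpha X)\big|_{\alpha=0}$ depends only on the values of $f$ on a fixed compact neighborhood of $\spec(A)\cup\spec(B)$ (for the derivative term this is because $\spec(B+\alpha X)$ stays in such a neighborhood for small $\alpha$), so I am free to replace $f$ by a compactly supported smooth function agreeing with it there. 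With this reduction, Theorem~\ref{T2.2} (with $n=1$) and Proposition~\ref{P2.1} (with $k=p=1$) guarantee that $f(A)-f(B)\in\calI_1$ and $\f{d}{d\alpha}f(B+\alpha X)\big|_{\alpha=0}\in\calI_1$ separately, so the trace on the left of \eqref{2.13} splits into a difference of two traces.

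First I would record that $\eta(\dott;A,B)$ has compact support, which is also what makes the right-hand side of \eqref{2.13} well defined. The function $\xi(\dott;A,B)$ is supported in the convex hull of $\spec(A)\cup\spec(B)$ and $d\mu_{B,X}$ is supported in $\spec(B)$, both compact; hence for $\lambda$ below these sets both terms in \eqref{2.12} vanish. For $\lambda$ above them, $\int_{-\infty}^\lambda\xi(\lambda')\,d\lambda'=\int_\bbR\xi(\lambda')\,d\lambda'=\tr X$ (the $f(x)=x$ case of \eqref{1.5}), while $\mu_{B,X}((-\infty,\lambda))=\mu_{B,X}(\bbR)=\tr X$ (the $f\equiv1$ case of \eqref{2.6}), so the two contributions cancel and $\eta=0$. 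Thus $\eta$ is a compactly supported function of bounded variation, being the difference of the absolutely continuous $\Xi(\lambda):=\int_{-\infty}^\lambda\xi(\lambda')\,d\lambda'$ and the bounded-variation $M(\lambda):=\mu_{B,X}((-\infty,\lambda))$, with $d\eta=d\mu_{B,X}-\xi\,d\lambda$.

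The crux is the trace of the derivative term. By Proposition~\ref{P2.1} with $k=1$, for $f(x)=e^{itx}$ one has $\f{d}{d\alpha}e^{it(B+\alpha X)}\big|_{\alpha=0}=i\int_0^t e^{i\beta B}Xe^{i(t-\beta)B}\,d\beta$, an $\calI_1$-convergent integral; taking the trace and using cyclicity $\tr(e^{i\beta B}Xe^{i(t-\beta)B})=\tr(Xe^{itB})$ gives $\tr(\f{d}{d\alpha}e^{it(B+\alpha X)}\big|_{\alpha=0})=it\,\tr(Xe^{itB})$. Cyclicity of the trace, valid because $X\in\calI_1$, is precisely what lets me avoid the extra hypothesis that $[B,X]$ be trace class flagged in the remark above. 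Writing $f(x)=(2\pi)^{-1/2}\int\widehat f(t)e^{itx}\,dt$, differentiating under the integral (justified by the bound \eqref{2.2}), and inserting $\tr(Xe^{itB})=\int_\bbR e^{it\lambda}\,d\mu_{B,X}(\lambda)$ from \eqref{2.6}, Fubini then yields the identity $\tr(\f{d}{d\alpha}f(B+\alpha X)\big|_{\alpha=0})=\int_\bbR f'(\lambda)\,d\mu_{B,X}(\lambda)$.

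Finally I would assemble the pieces. Combining Krein's formula \eqref{1.5} with the identity just derived shows that the left-hand side of \eqref{2.13} equals $\int_\bbR f'\xi\,d\lambda-\int_\bbR f'\,d\mu_{B,X}$. Since $\eta=M-\Xi$ is compactly supported and of bounded variation, integration by parts against the test function $f'$ produces no boundary terms and gives $\int_\bbR f''\eta\,d\lambda=-\int_\bbR f'\,d\eta=-\int_\bbR f'\,d\mu_{B,X}+\int_\bbR f'\xi\,d\lambda$, which matches the previous expression and proves \eqref{2.13}. I expect the only genuinely delicate point to be the trace identity of the third paragraph---justifying termwise tracing, the use of cyclicity under the $\beta$-integral, and the passage from exponentials to general $f$; the compact-support and integration-by-parts steps are then routine bookkeeping.
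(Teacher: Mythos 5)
Your proposal is correct and is essentially the paper's own proof: the trace of the derivative term is evaluated through the cyclicity identity \eqref{2.15} on exponentials, yielding $\tr\big(\tfrac{d}{d\alpha}f(B+\alpha X)\big|_{\alpha=0}\big)=\int_\bbR f'(\lambda)\,d\mu_{B,X}(\lambda)$, which is then combined with Krein's formula \eqref{1.5} and an integration by parts, while compact support of $\eta$ follows from the equality of total masses $\mu_{B,X}(\bbR)=\tr(X)=\int_\bbR\xi(\lambda;A,B)\,d\lambda$, exactly as in the paper's Remark~1. The additional details you supply (the reduction to compactly supported $f$, the Fubini argument, and the observation that cyclicity is legitimate because $X\in\calI_1$) merely make explicit what the paper leaves implicit.
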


\begin{remarks}
1. Since
\[
\int_{\bbR} d\mu_{B,X}(\lambda) =\tr(X)=\int_{\bbR} \xi(\lambda;A,B)\, d\lambda,
\]
we can replace $(-\infty,\lambda)$ in both places in \eqref{2.12} by $[\lambda,\infty)$.
This shows that $\eta$ in \eqref{2.12} has compact support.

\smallskip
2. \eqref{2.13} determines $\eta$ uniquely up to an affine term. The condition that
$\eta$ have compact support (as the $\eta$ of \eqref{2.12} does) determines $\eta$
uniquely.
\end{remarks}

\begin{proof} We first claim that $\left. \f{d}{d\alpha} f(B+\alpha X)\right|_{\alpha =0}$
is trace class and that
\begin{equation} \lb{2.14}
\tr\bigg( \f{d}{d\alpha}\, f(B+\alpha X)\bigg|_{\alpha =0}\bigg) = \tr (Xf'(B)).
\end{equation}
This is immediate for $f$ nice enough (e.g., $f$
such that \eqref{2.5} holds for $n=1$) since
\begin{equation} \lb{2.15}
\tr (e^{i\alpha \beta B}Xe^{i\alpha (1-\beta)B}) = \tr (Xe^{i\alpha B}).
\end{equation}

Thus, by \eqref{2.6},
\begin{align*}
\tr\bigg( \f{d}{d\alpha}\, f(B+\alpha X)\bigg)
&= \int_{\bbR} f'(\lambda)\, d\mu_{B,X}(\lambda) \\
&= -\int_{\bbR} f''(\lambda) [\mu_{B,X} ((-\infty, \lambda))].
\end{align*}

Similarly, by \eqref{1.5},
\begin{align*}
\tr (f(A)-f(B)) &= \int_{\bbR} f'(\lambda) \xi (\lambda;A,B) \, d\lambda  \\
&= -\int_{\bbR} f'' (\lambda) \left(\int_{-\infty}^\lambda \xi (\lambda';A,B)\, d\lambda'\right)
d\lambda.
\qedhere
\end{align*}
\end{proof}

The next critical step will be to prove positivity of $\eta$.

\section{Convexity of $\tr(f(A))$} \lb{s3}

Positivity of $\eta$ is essentially equivalent to the following result:

\begin{theorem}\lb{T3.1} Let $f$ be a convex function on $\bbR$. Then the mapping
\begin{equation} \lb{3.1}
A\mapsto\tr (f(A))
\end{equation}
is a convex function on the $m\times m$ self-adjoint matrices for every $m\in\bbN$.
\end{theorem}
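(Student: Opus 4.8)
The plan is to establish the defining convexity inequality directly: for self-adjoint $A,B$ and $t\in[0,1]$, writing $C=tA+(1-t)B$, I would show
\[
\tr(f(C))\leq t\,\tr(f(A))+(1-t)\,\tr(f(B)).
\]
The idea is to diagonalize $C$ and reduce everything to the scalar Jensen inequality. Let $\{e_i\}_{i=1}^m$ be an orthonormal basis of eigenvectors of $C$; then $\tr(f(C))=\sum_i f(\langle e_i, C e_i\rangle)$, since each $e_i$ is an eigenvector so that $\langle e_i,f(C)e_i\rangle=f(\langle e_i,Ce_i\rangle)$. Because $\langle e_i, C e_i\rangle = t\langle e_i, A e_i\rangle+(1-t)\langle e_i, B e_i\rangle$, scalar convexity of $f$ gives
\[
\tr(f(C))\leq t\sum_i f(\langle e_i, A e_i\rangle)+(1-t)\sum_i f(\langle e_i, B e_i\rangle).
\]

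The heart of the argument is then the estimate that for every orthonormal basis $\{e_i\}$ and every self-adjoint $A$,
\[
\sum_{i=1}^m f(\langle e_i, A e_i\rangle)\leq \tr(f(A)).
\]
Granting this, applied to both $A$ and $B$, finishes the proof, since the right-hand side above becomes exactly $t\,\tr(f(A))+(1-t)\,\tr(f(B))$.

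To prove this key estimate I would use the spectral decomposition $A=\sum_j \mu_j P_j$ with unit eigenvectors $u_j$ and set $p_{ij}=\abs{\langle e_i, u_j\rangle}^2$. The matrix $(p_{ij})$ is doubly stochastic: $\sum_i p_{ij}=1$ because $u_j$ is a unit vector expanded in the basis $\{e_i\}$, and $\sum_j p_{ij}=1$ because $e_i$ is expanded in $\{u_j\}$. Since $\langle e_i, A e_i\rangle=\sum_j p_{ij}\mu_j$ is a convex combination of the eigenvalues, Jensen's inequality yields $f(\langle e_i, A e_i\rangle)\leq \sum_j p_{ij} f(\mu_j)$; summing over $i$ and exchanging the order of summation gives $\sum_i f(\langle e_i, A e_i\rangle)\leq \sum_j\big(\sum_i p_{ij}\big)f(\mu_j)=\sum_j f(\mu_j)=\tr(f(A))$. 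Equivalently, this is the statement that the diagonal of $A$ is majorized by its spectrum, combined with Karamata's inequality, but the doubly-stochastic bookkeeping above is self-contained and avoids invoking majorization theory.

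The main obstacle to watch is purely the noncommutativity: $f(C)$ is in no naive sense a convex combination of $f(A)$ and $f(B)$, so one cannot apply linearity of the trace to the functional calculus. The device that circumvents this is the passage to the eigenbasis of $C$, which converts the operator statement into a pair of scalar Jensen inequalities, the only genuine input being the doubly-stochastic structure of $p_{ij}=\abs{\langle e_i,u_j\rangle}^2$. The argument above handles general $t\in[0,1]$ directly, though one could equally first treat the midpoint $t=\tfrac12$ and upgrade to full convexity by continuity.
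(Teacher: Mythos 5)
Your proof is correct, and it is essentially the paper's ``Third Proof'' (attributed to M.~B.~Ruskai): diagonalizing $C=tA+(1-t)B$, applying scalar convexity, and then using Jensen's inequality to get $\sum_i f(\langle e_i, Ae_i\rangle)\leq\tr(f(A))$, which is exactly the paper's inequality $f(\langle v,Cv\rangle)\leq\langle v,f(C)v\rangle$ summed over the eigenbasis. Your doubly-stochastic bookkeeping with $p_{ij}=\abs{\langle e_i,u_j\rangle}^2$ is the same computation, just with the order of summation made explicit rather than invoking basis-independence of the trace.
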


\begin{remarks} 1. More generally, if $f$ is convex on $(a,b)$, \eqref{3.1} is
convex on matrices $A$ with spectrum in $(a,b)$. In fact, it is easy to see that any
convex function $f$ on $(a,b)$ is a monotone limit on $(a,b)$ of convex functions on
$\bbR$. So this more general result is a consequence of Theorem~\ref{T3.1}.

\smallskip
2. We will discuss the infinite-dimensional situation below.
\end{remarks}

Two special cases of this are widely known and used:
\begin{SL}
\item[(a)] $A\mapsto\tr (e^A)$ is convex.
\item[(b)] $A\mapsto\tr (A\log (A))$ is convex on $A\geq 0$.
\end{SL}
Both of these are rather special. In the first case, one has the stronger $A\mapsto
\log (\tr (e^A))$ is convex and the usual proof of it is via H\"older's
inequality (cf., e.g., \cite[p.\ 19--20]{Is79} or \cite[p.\ 57]{Si93}) which proves the
strong convexity of the $\log(\cdot)$, but does not prove Theorem~\ref{T3.1}. 
In the second case, by Kraus' theorem \cite{Kr,BSh} $A\mapsto A\log (A)$ is 
operator convex. (We also note that $A^r$, $r\in\bbR$, is operator convex for $A>0$ 
if and only if $r\in [-1,0]\cup [1,2]$ (cf.\ \cite[p.\ 147]{Bh97}).) We have found 
Theorem~\ref{T3.1} stated in Alicki--Fannes \cite[Sect.\ 9.1]{AF} and an equivalent 
statement in Ruelle \cite[Sect.\ 2.5]{RueBk} (who attributes it to Klein \cite{Kl31} 
although Klein only has the special case $f(x)= x\log (x)$ and his proof is specific 
to that case; Ruelle's is not). We have also found it in Lieb--Pedersen \cite{LP02} 
whose proof is closer to the one we label ``Third Proof" below. The result is also 
mentioned in von Neumann \cite{vN}, although the proof he gives earlier for a 
special $f$ does not seem to establish the general case.

In any event, even though this result is not hard and is known to some experts, we provide
several proofs because it is not widely known and is central to the theory of KoSSF. We
provide several proofs because they illustrate different aspects of the theorem.

\begin{proof}[First Proof] This uses eigenvalue perturbation theory. By a limiting argument,
it suffices to prove it for functions $f\in C^\infty(\bbR)$. By approximating derivatives of $f$ by
polynomials, we see that matrix elements, and so the trace of $f(A)$, are $C^\infty$-functions
of $A$. By a limiting argument, we need only show $\lambda\to\tr (A+\lambda X)$ has
a nonnegative second derivative at $\lambda =0$ in case $A$ has distinct eigenvalues.

So by changing basis, we suppose $A$ is diagonal with the eigenvalues $a_1< a_2 < \dots < a_m$.
Let $e_j(\lambda)$ be the eigenvalue of $A+\lambda X$ near $a_j$ for $|\lambda|$
sufficiently small. As is well known \cite[Sect.\ II.2]{Ka80}, \cite[Sect.\ XII.1]{RS4},
\begin{equation} \lb{3.2}
\left. \f{d^2 e_j}{d\lambda^2}\right|_{\lambda =0} =\sum_{\substack{k=1 \\ k\neq j}}^m\,
\f{\abs{X_{k,j}^2}}{a_j-a_k}.
\end{equation}

Clearly,
\begin{align*}
\f{d}{d\lambda}\, [f(e_j(\lambda))] &= f'(e_j(\lambda)) e'_j (\lambda),  \\
\f{d^2}{d\lambda^2}\, f(e_j (\lambda)) &= f'' (e_j(\lambda)) e'_j(\lambda)^2
+ f' (e_j(\lambda)) e''_j (\lambda),
\end{align*}
so
\begin{equation} \lb{3.3}
\left. \f{d^2}{d\lambda^2}\, \tr(f(A(\lambda)))\right|_{\lambda =0} = \boxed{1} + \boxed{2}\, ,
\end{equation}
where
\[
\boxed{1} =\sum_{j=1 }^m f''(a_j) e'_j(0)^2 \geq 0
\]
since $f'' \geq 0$ and, by \eqref{3.2},
\[
\boxed{2} =\sum_{\substack{ j,k=1\\ k\neq j}}^m \abs{X_{k,j}}^2
\biggl[ \f{f'(a_j) -f'(a_k)}{a_j-a_k}\biggr] \geq 0
\]
since for $x <y$,
\[
\f{f'(y)-f'(x)}{y-x} = \f{1}{y-x} \int_x^y f''(u)\, du \geq 0.
\qedhere
\]
\end{proof}

\begin{proof}[Second Proof] This one uses a variational principle. We consider first the case
\begin{equation} \lb{3.4}
f^+(x)=x_+ \equiv\begin{cases}
x, & x\geq 0, \\
0, & x <0.
\end{cases}
\end{equation}
We claim first that
\begin{equation} \lb{3.5}
\tr(f^+(A)) = \max\{\tr(AB) \, | \, \|B\|\leq 1,\, B\geq 0\},
\end{equation}
where $\|\cdot\|$ is the matrix norm on $\bbC^m$ with the Euclidean norm. For in an orthonormal
basis where $A$ is a diagonal matrix,
\[
\tr(AB)=\sum_{j=1}^m a_j b_{j,j}\leq \sum_{j=1}^m (a_j)_+ =\tr(f^+(A))
\]
if $0\leq b_{jj}\leq 1$. On the other hand, if $B$ is the diagonal matrix with
\[
b_{jj}=\begin{cases}
1, & a_j >0,  \\
0, & a_j\leq 0,
\end{cases}
\]
then $B\geq 0$, $\|B\|\leq 1$, and $\tr(AB)=\tr(f^+(A))$. This proves \eqref{3.5}.

Convexity is immediate for $f^+$ given by \eqref{3.4} once we have \eqref{3.5}, since
maxima of linear functionals are convex. Obviously, since $(x-\lambda)_+$ is just
a translate of $x_+$, we get convexity for any function of
\[
\int_{\lambda_0}^\infty (x-\lambda)_+ \, d\mu (\lambda)
\]
for any Borel measure $\mu$ on $(\lambda_0, \infty)$. But every convex function
$f$ with $f\equiv 0$ for $x\leq \lambda_0$ has this form.

Adding $ax+b$ to this, we get the result for any convex function $f$ with $f''(x)=0$ for
$x\leq \lambda_0$. Taking $\lambda_0\to -\infty$, we get the result for general convex
functions $f$.
\end{proof}

\begin{proof}[Third Proof {\rm{(}}M.~B.~Ruskai, private communication{\rm{)}}]
If $f$ is any convex function, $C$ a self-adjoint $m\times m$ matrix with
\begin{equation} \lb{3.6}
Ce_j =\lambda_je_j,
\end{equation}
and $v\in\bbC^m$ a unit vector, then
\begin{align}
\langle v, f(C)v\rangle &= \sum_{j=1}^m \abs{\langle v, e_j\rangle}^2 f(\lambda_j) \notag \\
&\geq f \biggl( \sum_{j=1}^m \lambda_j \abs{\langle v,e_j\rangle}^2\biggr) \lb{3.7} \\
&= f (\langle v, Cv\rangle),   \lb{3.7a}
\end{align}
where \eqref{3.7} employs Jensen's inequality.

Now suppose
\[
C=\theta A + (1-\theta) B, \quad \theta \in [0,1].
\]
Then,
\begin{align}
\tr (f(C)) &=\sum_{j=1}^m f(\langle e_j, Ce_j\rangle ) \notag \\
& = \sum_{j=1}^m f(\theta \langle e_j, Ae_j\rangle + (1-\theta)
\langle e_j, Be_j\rangle) \notag \\
&\leq \sum_{j=1}^m [\theta f(\langle e_j, Ae_j\rangle) + (1-\theta)
f(\langle e_j, Be_j\rangle)] \lb{3.8} \\
&\leq \theta \sum_{j=1}^m \langle e_j, f(A)e_j\rangle + (1-\theta) \sum_{j=1}^m
\langle e_j, f(B)e_j\rangle \lb{3.9} \\
&= \theta \tr(f(A))+(1-\theta) \tr(f(B)),
\end{align}
proving convexity. In the above, \eqref{3.8} is direct convexity of $f$ and
\eqref{3.9} is \eqref{3.7a} for $v=e_j$ and $C=A$ or $B$.
\end{proof}

\begin{corollary} \lb{C3.2} If $f\in C^1(\bbR)$ is convex and $B$ and $X$ are
$m\times m$ self-adjoint matrices, $m\in\bbN$, then
\begin{equation} \lb{3.10}
\tr\bigg(f(B+X)-f(B) - \f{d}{d\alpha}\, f(B+\alpha X)\bigg|_{\alpha=0}\bigg)
\geq 0.
\end{equation}
\end{corollary}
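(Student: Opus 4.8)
The plan is to reduce the matrix inequality to the one-dimensional fact that a convex, differentiable function lies above each of its tangent lines. First I would introduce the scalar function
\[
\phi(\alpha) = \tr(f(B + \alpha X)), \qquad \alpha \in \bbR.
\]
The map $\alpha \mapsto B + \alpha X$ is an affine map of $\bbR$ into the $m \times m$ self-adjoint matrices, so Theorem~\ref{T3.1}, applied to the convex function $f$, immediately gives that $\phi$ is a convex function of the single real variable $\alpha$. Once convexity of $\phi$ is in hand, the desired inequality \eqref{3.10} is exactly the assertion
\[
\phi(1) - \phi(0) - \phi'(0) \geq 0,
\]
i.e.\ that the graph of $\phi$ lies above its tangent line at $\alpha = 0$, read off at $\alpha = 1$. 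Indeed $\phi(1) = \tr(f(B+X))$, $\phi(0) = \tr(f(B))$, and $\phi'(0) = \tr\big(\f{d}{d\alpha} f(B+\alpha X)\big|_{\alpha=0}\big)$.

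The one genuine point to check is that $\phi$ is differentiable at $\alpha = 0$, so that $\phi'(0)$ is well defined and equals the trace of the operator derivative appearing in \eqref{3.10}. Because $f \in C^1(\bbR)$ and we work in finite dimensions, the matrix-valued map $\alpha \mapsto f(B + \alpha X)$ is differentiable at $\alpha = 0$; write $D = \f{d}{d\alpha} f(B+\alpha X)\big|_{\alpha = 0}$ for its derivative. Since the trace is linear and continuous,
\[
\phi'(0) = \lim_{\alpha \to 0} \f{\tr(f(B+\alpha X)) - \tr(f(B))}{\alpha} = \tr(D),
\]
so $\phi'(0)$ exists and coincides with the quantity subtracted in \eqref{3.10}. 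One may establish existence of $D$, together with the companion identity $\tr(D) = \tr(Xf'(B))$, by first treating polynomial $f$, where it follows from cyclicity of the trace, and then approximating a general $f \in C^1$ by polynomials $p_n$ with $p_n \to f$ and $p_n' \to f'$ uniformly on a neighborhood of the spectra of $B$ and of the $B + \alpha X$ for small $\alpha$.

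With convexity of $\phi$ and differentiability at $0$ in place, the conclusion is immediate: a convex function on $\bbR$ lies above its tangent line, so $\phi(1) \geq \phi(0) + \phi'(0)(1 - 0)$, which rearranges to \eqref{3.10}. Equivalently, convexity makes the difference quotient $\alpha \mapsto (\phi(\alpha) - \phi(0))/\alpha$ nondecreasing in $\alpha>0$, so its value at $\alpha = 1$ dominates its limit $\phi'(0)$ as $\alpha \downarrow 0$. The main, and essentially only, obstacle is the differentiability input of the preceding paragraph; everything else is a direct appeal to Theorem~\ref{T3.1} and elementary convexity. I would emphasize that no smoothness beyond $C^1$ is required, precisely because convexity supplies the needed control and only the first derivative of $f$ enters \eqref{3.10}.
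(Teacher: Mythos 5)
Your proposal is correct and follows essentially the same route as the paper's own proof: both reduce \eqref{3.10} to the scalar tangent-line inequality $g(x+y)-g(x)-g'(x)y\geq 0$ applied to the function $g(\alpha)=\tr(f(B+\alpha X))$, whose convexity in $\alpha$ is exactly Theorem~\ref{T3.1} composed with the affine map $\alpha\mapsto B+\alpha X$. The only difference is that you explicitly verify differentiability of this scalar function at $\alpha=0$ (via polynomial approximation), a point the paper leaves implicit.
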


\begin{remarks} 1. It is not hard to see that \eqref{3.10} is equivalent to
Theorem~\ref{T3.1}.

\smallskip
2. It is in this form that the result appears in Ruelle \cite[Sect.\ 2.5]{RueBk},
and for the case $f(x)=x\log (x)$, $x>0$, in Klein \cite{Kl31}.
\end{remarks}

\begin{proof} If $g\in C^1(\bbR)$ is a convex function,
\begin{equation} \lb{3.11}
g(x+y) -g(x)-g'(x)y \geq 0,
\end{equation}
since convexity says that $g$ lies above the tangent line at any point. \eqref{3.10}
is \eqref{3.11} for $g(\alpha)=\tr (f(B+\alpha X))$, $x=0$, $y=1$.
\end{proof}

\begin{corollary}\lb{C3.3} For finite-dimensional matrices $A$ and $B$, the KoSSF,
$\eta(\dott;A,B)$, satisfies $\eta(\lambda;A,B) \geq 0$ for a.e.\ $\lambda\in\bbR$.
\end{corollary}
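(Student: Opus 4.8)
The plan is to combine the trace formula \eqref{2.13} of Theorem~\ref{T2.4} with the convexity inequality \eqref{3.10} of Corollary~\ref{C3.2}, and then to argue that a nonnegative distribution represented by a locally integrable function must be nonnegative a.e.

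First I would observe that for any convex $f\in C^\infty(\bbR)$, writing $X=(A-B)$ and combining \eqref{2.13} with \eqref{3.10} gives
\begin{equation*}
\int_{\bbR} f''(\lambda)\,\eta(\lambda;A,B)\,d\lambda
= \tr\bigg(f(A)-f(B)-\f{d}{d\alpha}\,f(B+\alpha X)\bigg|_{\alpha=0}\bigg)\geq 0 .
\end{equation*}
Thus the pairing of $\eta$ against $f''$ is nonnegative for every smooth convex $f$. Note that in the finite-dimensional setting $\eta(\dott;A,B)$ is an honest function: in \eqref{2.12}, $d\mu_{B,X}$ is a finite sum of point masses and $\xi(\dott;A,B)$ is an integer-valued step function, so $\eta$ is a bounded function of compact support, in particular locally integrable.

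The key step is to realize an arbitrary nonnegative test function as the second derivative of a convex function. Given $\phi\in C^\infty(\bbR)$ of compact support with $\phi\geq 0$, I would set $f(\lambda)=\int_{-\infty}^{\lambda}(\lambda-s)\phi(s)\,ds$. Then $f\in C^\infty(\bbR)$ (it vanishes to the left of $\supp\phi$ and is affine to the right), $f''=\phi$, and $f$ is convex since $f''=\phi\geq 0$. Although $f$ need not have compact support, the integral $\int_{\bbR}f''\eta\,d\lambda$ is still well defined since $\eta$ does, and indeed \eqref{2.13} in Theorem~\ref{T2.4} is stated for all $f\in C^\infty(\bbR)$. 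Applying the previous display with this $f$ yields
\begin{equation*}
\int_{\bbR}\phi(\lambda)\,\eta(\lambda;A,B)\,d\lambda\geq 0
\quad\text{for all such }\phi .
\end{equation*}

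Finally I would invoke the standard fact that a locally integrable function whose pairing with every nonnegative test function is nonnegative must itself be nonnegative a.e.; applied to $\eta(\dott;A,B)$ this gives $\eta(\lambda;A,B)\geq 0$ for a.e.\ $\lambda\in\bbR$. I do not expect any serious obstacle: the only points needing a little care are that every nonnegative $\phi$ genuinely arises as $f''$ of a \emph{convex} $f$ (which the double-antiderivative construction supplies) and that the noncompact support of $f$ causes no trouble once one uses the compact support of $\eta$.
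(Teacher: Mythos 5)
Your proposal is correct and follows essentially the same route as the paper's own proof: both combine the trace formula \eqref{2.13} with the convexity inequality \eqref{3.10}, realizing nonnegative test functions as $f''$ of a convex $f$ via a double antiderivative, and conclude $\eta\geq 0$ a.e. The only (minor) difference is that the paper tests against arbitrary bounded measurable $h\geq 0$ supported near the spectra, while you restrict to smooth compactly supported $\phi\geq 0$ and finish with the standard du Bois--Reymond argument --- a variant that in fact sits more comfortably with the $C^\infty(\bbR)$ hypothesis under which \eqref{2.13} is stated.
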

\begin{proof}
Let $h:\bbR\to[0,\infty)$ be a measurable function bounded and supported on an interval $(a,b)$
with $\sigma(A)\cup\sigma(B)\subset(a,b)$ (so, by \eqref{2.12}, $\eta$ is supported on
$(a,b)$). Let $f$ be the unique convex function with $f=0$ near $-\infty$ and $f''=h$.
By \eqref{3.10} and \eqref{2.13},
\begin{equation} \lb{3.12}
0 \leq \int_{\bbR} h(\lambda) \eta(\lambda;A,B)\, d\lambda.
\end{equation}
Since $h$ is arbitrary, $\eta\geq 0$ a.e.
\end{proof}

\begin{theorem}\lb{T3.4} For any finite self-adjoint matrices $A,B$ {\rm{(}}of the same
size{\rm{)}},
\begin{equation} \lb{3.13}
\int_{\bbR} \, \abs{\eta(\lambda;A,B)}\, d\lambda = \tfrac12\, \|A-B\|_{\calI_2}^2.
\end{equation}
\end{theorem}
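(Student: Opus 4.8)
The plan is to evaluate the identity \eqref{2.13} on a single, carefully chosen test function. Since Corollary~\ref{C3.3} already gives $\eta(\lambda;A,B)\ge 0$ a.e., the absolute value in \eqref{3.13} is cosmetic and the left-hand side equals $\int_{\bbR}\eta(\lambda;A,B)\,d\lambda$. To extract this integral from \eqref{2.13} I want an $f$ with $f''\equiv 1$ on the support of $\eta$; the natural choice is $f(x)=\tfrac12 x^2$.

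First I would record that, by the construction in Theorem~\ref{T2.4} (see the Remarks following it), $\eta(\dott;A,B)$ is supported in any open interval $(a,b)\supset\sigma(A)\cup\sigma(B)$. Fix such an interval and pick $f\in C^\infty(\bbR)$ of compact support with $f(x)=\tfrac12 x^2$ on $(a,b)$. Then $f''\equiv 1$ on $\supp\eta$, so the right-hand side of \eqref{2.13} collapses to $\int_{\bbR}\eta\,d\lambda$; meanwhile $f(A)=\tfrac12 A^2$, $f(B)=\tfrac12 B^2$, and $f(B+\alpha X)=\tfrac12(B+\alpha X)^2$ for $\alpha$ small (since $\sigma(B+\alpha X)\subset(a,b)$ then), so the left-hand side of \eqref{2.13} is unchanged if I compute with $f(x)=\tfrac12 x^2$ outright.

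Next comes the one short computation. With $X=A-B$ one has
\[
\f{d}{d\alpha}\,\tfrac12(B+\alpha X)^2\Big|_{\alpha=0}=\tfrac12(BX+XB),
\]
while $\tfrac12 A^2-\tfrac12 B^2=\tfrac12(BX+XB+X^2)$ from $A=B+X$. Subtracting, the argument of the trace on the left of \eqref{2.13} is exactly $\tfrac12 X^2$. Taking the trace and using self-adjointness of $X$,
\[
\int_{\bbR}\eta(\lambda;A,B)\,d\lambda=\tr\big(\tfrac12 X^2\big)=\tfrac12\,\tr(X^*X)=\tfrac12\,\|X\|_{\calI_2}^2,
\]
which is \eqref{3.13}.

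I do not expect a genuine obstacle here: in finite dimensions every operator is trace class, so there are no convergence issues, and the only point needing a word of care is the licence to use the non-compactly-supported $f(x)=\tfrac12 x^2$, which the compact support of $\eta$ together with the spectral localization of $A$, $B$, and $B+\alpha X$ fully justifies.
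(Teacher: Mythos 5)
Your proposal is correct and follows essentially the same route as the paper's own proof: take $f(x)=\tfrac12 x^2$ in \eqref{2.13}, compute that the operator inside the trace reduces to $\tfrac12 X^2$, and invoke the positivity of $\eta$ from Corollary~\ref{C3.3} to drop the absolute value. Your extra care about replacing $f$ by a compactly supported modification is harmless but unnecessary, since Theorem~\ref{T2.4} states \eqref{2.13} for all $f\in C^\infty(\bbR)$, which includes $\tfrac12 x^2$ directly.
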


\begin{remarks} 1. It is remarkable that we always have equality in \eqref{3.13}.
The analog for the KrSSF is
\begin{equation} \lb{3.14}
\int_{\bbR} \, \abs{\xi(\lambda;A,B)}\, d\lambda \leq \|A-B\|_{\calI_1},
\end{equation}
where equality, in general, holds if $A-B$ is either positive or negative.

\smallskip
2. \eqref{3.13} emphasizes again the lack of a chain rule for $\eta$; $\eta$ is nonlinear
in $(A-B)$.
\end{remarks}

\begin{proof} Take $f(x)=\f12 x^2$ such that $f''(x)=1$ and
\begin{align*}
&f(B+X)-f(B) -\left. \f{d}{d\alpha}\, f(B+ \alpha X)\right|_{\alpha =0}  \\
&\quad = \tfrac12\, \big[(B+X)^2 -B^2 -XB-BX\big] =\tfrac12\, X^2.
\end{align*}
Since $\eta\geq 0$, $\int_{\bbR} f''(\lambda)\eta(\lambda;A,B)\, d\lambda =
\int_{\bbR} \abs{\eta(\lambda;A,B)}\, d\lambda$ and \eqref{3.13} holds.
\end{proof}

In Section~\ref{s5} we take limits from the finite-dimensional situation, but one can
easily extend Theorem~\ref{T3.1} in two ways and from there directly prove $\eta\geq 0$
and \eqref{3.13} in case $(A-B)\in\calI_1$. Without proof, we state the extensions (the
results are simple limiting arguments from finite dimensions):

\begin{theorem}\lb{T3.5} If $f$ is convex on $\bbR$ and $f(0)=0$, then $f(A)$ is trace
class for any self-adjoint trace class operator $A$, and for such $A$'s, the mapping
$A\mapsto \tr(f(A))$ is convex.
\end{theorem}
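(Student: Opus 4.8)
The plan is to prove the two assertions separately: first that $f(A)\in\calI_1$, and then that $A\mapsto\tr(f(A))$ is convex. For the convexity I would not truncate but instead transcribe the Third Proof of Theorem~\ref{T3.1} verbatim into infinite dimensions, which turns out to go through cleanly.

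For the trace class claim I would use that a convex $f$ is locally Lipschitz. If $A$ is self-adjoint and trace class, then $A$ is compact with eigenvalues $\{\lambda_j\}$ (repeated by multiplicity) satisfying $\sum_j\abs{\lambda_j}=\|A\|_{\calI_1}<\infty$ and $\abs{\lambda_j}\le\|A\|$. On the compact interval $[-\|A\|,\|A\|]$ the function $f$ is Lipschitz with some constant $L$, and since $f(0)=0$ this gives $\abs{f(\lambda)}\le L\abs{\lambda}$ there. Because $f(0)=0$, the operator $f(A)$ acts as $0$ on $\ker A$ and has eigenvalues $f(\lambda_j)$, so $\sum_j\abs{f(\lambda_j)}\le L\sum_j\abs{\lambda_j}=L\|A\|_{\calI_1}<\infty$; hence $f(A)\in\calI_1$.

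For convexity, fix self-adjoint trace class $A,B$, set $C=\theta A+(1-\theta)B$ with $\theta\in[0,1]$, and put $R=\max(\|A\|,\|B\|)$, so $\|C\|\le R$ and $f$ is Lipschitz with constant $L$ on $[-R,R]$. I would choose an orthonormal basis $\{e_j\}$ of eigenvectors of the compact operator $C$, with $Ce_j=\mu_j e_j$. Then $\tr(f(C))=\sum_j f(\mu_j)=\sum_j f(\langle e_j,Ce_j\rangle)$, and writing $\langle e_j,Ce_j\rangle=\theta\langle e_j,Ae_j\rangle+(1-\theta)\langle e_j,Be_j\rangle$, convexity of $f$ followed by the Jensen step \eqref{3.7}--\eqref{3.7a} (applied with $v=e_j$ and $C=A$ or $B$) gives, term by term,
\[
f(\langle e_j,Ce_j\rangle)\le\theta\langle e_j,f(A)e_j\rangle+(1-\theta)\langle e_j,f(B)e_j\rangle.
\]
Summing over $j$ and using that the trace of a trace class operator equals the sum of its diagonal entries in any orthonormal basis would then yield $\tr(f(C))\le\theta\tr(f(A))+(1-\theta)\tr(f(B))$.

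The main obstacle, and the only point requiring care, is justifying that every series above converges absolutely, so that the termwise inequality may be summed and the diagonal sums identified with traces. Here $f(0)=0$ is essential: it forces $\abs{f(t)}\le L\abs{t}$ on $[-R,R]$, whence $\sum_j\abs{f(\langle e_j,Ae_j\rangle)}\le L\sum_j\abs{\langle e_j,Ae_j\rangle}\le L\|A\|_{\calI_1}$, using the standard bound $\sum_j\abs{\langle e_j,Ae_j\rangle}\le\|A\|_{\calI_1}$ for the diagonal of a self-adjoint trace class operator (immediate from its eigenexpansion), and similarly for $B$ and $C$; the sums $\sum_j\langle e_j,f(A)e_j\rangle$ converge absolutely since $f(A)\in\calI_1$. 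An alternative route, closer to the ``limiting argument from finite dimensions'' the authors allude to, is to truncate by finite-rank projections $P_n\to I$ and apply Theorem~\ref{T3.1} on $\ran(P_n)$; but then one must control $\tr(f(P_nAP_n))\to\tr(f(A))$, where a possible corner of $f$ at $0$ interacts awkwardly with the accumulation of $\sigma(A)$ at $0$ (one would have to split off a multiple of $x_+$, handled by a variational estimate, from a remainder with $f(x)/x$ continuous). The eigenbasis argument above sidesteps this entirely, so I would prefer it.
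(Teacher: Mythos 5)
Your proposal is correct, but it follows a genuinely different route from the paper's. The paper in fact gives no proof of Theorem~\ref{T3.5}: it declares the result to be a ``simple limiting argument from finite dimensions,'' i.e., truncate by finite-rank projections $P_n\to I$, apply Theorem~\ref{T3.1} on $\ran(P_n)$, and pass to the limit---the same device used in the proofs of Theorem~\ref{T4.1} and Theorem~\ref{T5.1}. You instead argue directly in infinite dimensions: the trace-class assertion from the local Lipschitz bound $\abs{f(\lambda)}\leq L\abs{\lambda}$ (valid since $f$ is convex and $f(0)=0$) applied to the eigenvalues of $A$, and the convexity assertion by running Ruskai's Third Proof in an orthonormal eigenbasis of the compact operator $C=\theta A+(1-\theta)B$, with Jensen's inequality now taken over countable convex combinations. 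Both halves are sound; the absolute convergence checks you supply are exactly what is needed to sum the termwise inequality and to identify diagonal sums with traces, and your observation that $f(0)=0$ makes $f(A)$ vanish on $\ker A$ is also the point that keeps the countable Jensen step honest, since the kernel of $A$ carries the residual weight at the point $0$ in that step. The trade-off between the two routes is real: your eigenbasis argument is self-contained, whereas the truncation route must justify $\tr(f(P_nXP_n))\to\tr(f(X))$ for $X=A,B,C$, which is not a one-liner---a Lipschitz function need not be operator Lipschitz in trace norm, so one needs either an eigenvalue-matching inequality of Lidskii--Kato type for self-adjoint trace class operators or a splitting of $f$ of the sort you sketch. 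What the paper's route buys is only economy: it reuses the finite-dimensional theorem as a black box and is uniform with the truncation arguments appearing elsewhere in the paper. Your preference for the direct argument is well founded.
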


In this context we note that convex functions are Lipschitz continuous. (For this and additional
regularity results of convex functions, see, e.g., \cite[p.\ 145--146]{Bh97}.)

\begin{theorem}\lb{T3.6} For any convex function $f\in C^\infty(\bbR)$, any bounded
self-adjoint operator $B$, and any self-adjoint operator $X\in\calI_1$,
$$
[f(B+X)-f(B)]\in\calI_1
$$
and the mapping $X\mapsto\tr (f(B+X)-f(B))$ is convex.
\end{theorem}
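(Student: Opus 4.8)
The plan is to reduce everything to the finite-dimensional convexity already established in Theorem~\ref{T3.1} (in the form of Corollary~\ref{C3.2}) by compressing $B$ and $X$ to an increasing sequence of finite-dimensional subspaces and then passing to the limit. First I would dispose of the trace-class assertion. Fix $R>\|B\|+\|X\|_{\calI_1}$, so that $\sigma(B)$ and $\sigma(B+X)$ both lie in $(-R,R)$, and choose $\tilde f\in C_c^\infty(\bbR)$ with $\tilde f=f$ on $[-R,R]$; then $\tilde f(B)=f(B)$ and $\tilde f(B+X)=f(B+X)$, while $\tilde f$ satisfies \eqref{2.5} with $n=1$. Theorem~\ref{T2.2} (with $n=1$, so the correction sum is empty) then gives $f(B+X)-f(B)=\tilde f(B+X)-\tilde f(B)\in\calI_1$.

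For convexity, let $X_0,X_1\in\calI_1$ be self-adjoint, $\theta\in[0,1]$, and $X_\theta=\theta X_1+(1-\theta)X_0$; enlarge $R$ so that the spectra of $B+X_0,B+X_1,B+X_\theta$ all lie in $(-R,R)$. Let $P_n$ be finite-rank orthogonal projections increasing strongly to $\bdone$, write $\calH_n=\ran P_n$, and set $B_n=P_nBP_n$ and $(B+X_i)_n=P_n(B+X_i)P_n$, all regarded as self-adjoint operators on $\calH_n$. Since compression does not increase the numerical range, every operator on the segment $\{P_n(B+X_\theta)P_n\}$ has spectrum in $(-R,R)$, where $f$ is convex; hence by the interval form of Theorem~\ref{T3.1} (its first Remark) the finite-dimensional map $Y\mapsto\tr_{\calH_n}f(B_n+Y)$ is convex on the relevant set. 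Applying this to the compressions $P_nX_iP_n$ (which depend affinely on the $X_i$) and subtracting the common term $\tr_{\calH_n}f(B_n)$ yields
\[
\tr_{\calH_n}\!\big[f\big((B+X_\theta)_n\big)-f(B_n)\big]\leq \theta\,\tr_{\calH_n}\!\big[f\big((B+X_1)_n\big)-f(B_n)\big]+(1-\theta)\,\tr_{\calH_n}\!\big[f\big((B+X_0)_n\big)-f(B_n)\big].
\]

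The whole argument then rests on the single limit, for fixed self-adjoint $X\in\calI_1$ and $A=B+X$,
\[
\lim_{n\to\infty}\tr_{\calH_n}\!\big[f(A_n)-f(B_n)\big]=\tr\big[f(A)-f(B)\big],\qquad A_n=P_nAP_n,\ B_n=P_nBP_n.
\]
This is the main obstacle, precisely because compression does not commute with $f$, so $f(A_n)\neq P_nf(A)P_n$. I would handle it through the DuHamel/Fourier representation already used in Section~\ref{s2}: writing $\tilde f(C)=(2\pi)^{-1/2}\int\widehat{\tilde f}(t)e^{itC}\,dt$ and $e^{itA}-e^{itB}=i\int_0^t e^{i(t-s)A}Xe^{isB}\,ds$ (and likewise for the compressions, with $X$ replaced by $X_n=P_nXP_n$), the two traces become double integrals of $\tr[e^{i(t-s)A}Xe^{isB}]$ and of its compressed analogue. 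Because $P_nAP_n\to A$ and $P_nBP_n\to B$ strongly with uniform norm bound, the groups $e^{i(t-s)A_n}P_n$ and $P_ne^{isB_n}$ converge strongly (as do their adjoints), and $X_n\to X$ in $\calI_1$; a standard $\calI_1$ dominated-convergence argument via the canonical expansion of $X$ then shows $e^{i(t-s)A_n}X_ne^{isB_n}\to e^{i(t-s)A}Xe^{isB}$ in $\calI_1$, so the integrands converge pointwise in $(s,t)$. The uniform bound $\abs{\tr_{\calH_n}[\,\cdot\,]}\leq\|X\|_{\calI_1}$ together with $\int(1+\abs{t})\abs{\widehat{\tilde f}(t)}\,dt<\infty$ justifies passing the limit through the integrals, establishing the displayed limit.

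Granting this limit, letting $n\to\infty$ in the finite-dimensional inequality gives convexity of $X\mapsto\tr(f(B+X)-f(B))$. As a consistency check one notes that $\tr(P_n(f(A)-f(B))P_n)\to\tr(f(A)-f(B))$ trivially since $f(A)-f(B)\in\calI_1$; the real content of the limit above is exactly that replacing $P_nf(A)P_n$ by $f(A_n)$ (and similarly for $B$) does not affect the limiting trace, which is where the $\calI_1$-continuity of the sandwiched exponentials is essential.
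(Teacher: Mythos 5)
Your proof is correct and follows exactly the route the paper indicates for Theorem~\ref{T3.6} (which it states without proof as a ``simple limiting argument from finite dimensions''): finite-dimensional convexity of $\tr(f(\cdot))$ from Theorem~\ref{T3.1} applied to compressions $P_n(\cdot)P_n$, plus a trace-convergence limit, with the trace-class assertion handled via the Fourier/DuHamel bound of Section~\ref{s2}. The limit you identify as the main obstacle is precisely Theorem~\ref{TA.1}\,(b) of the appendix, applied with $B_n=P_nBP_n$ and $X_n=P_nXP_n\to X$ in $\calI_1$, and your DuHamel/Fourier argument for it essentially reproduces the paper's own proof of that result, so you could simply have cited it.
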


Convexity of maps of the type $s\mapsto \tr(f(B+X(s))-f(B))$, $s\in(s_1,s_2)$,
for convex $f$ and certain classes of $X(\cdot)\in\calI_1$ was also studied in \cite{GMM00}.

\section{L\"owner's Formula and the Finite-Dimensional Birman--Solomyak Bound}
\lb{s4}

The final element needed to construct the KoSSF is the following lovely theorem of
Birman--Solomyak \cite{BS2} (see also \cite{BS4}):

\begin{theorem}\lb{T4.1} Let $A,B$ be bounded self-adjoint operators with
$(A-B)$ Hilbert--Schmidt. Let $f$ be a function defined on an interval
$[a,b]\supset \sigma(A)\cup\sigma(B)$. Suppose $f$ is uniformly Lipschitz, that is,
\begin{equation} \lb{4.1}
\|f\|_L =\sup_{\substack{x,y\in [a,b]\\x\neq y}}\, \f{\abs{f(x)-f(y)}}{\abs{x-y}} <\infty.
\end{equation}
Then $[f(A)-f(B)]$ is also Hilbert--Schmidt and
\begin{equation} \lb{4.2}
\|f(A)-f(B)\|_{\calI_2} \leq \|f\|_L \|A-B\|_{\calI_2}.
\end{equation}
\end{theorem}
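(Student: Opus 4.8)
The plan is to reduce to the finite-dimensional case, dispatch that case by L\"owner's divided-difference (``Loewner matrix'') computation, and then recover the general bound by a truncation followed by a weak-limit argument. The finite-dimensional inequality is short; the one genuinely delicate point will be the passage to infinite dimensions.

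\emph{Finite-dimensional case.} Suppose $A,B$ are $m\times m$ self-adjoint, and diagonalize $A=\sum_i a_i\langle u_i,\dott\rangle u_i$, $B=\sum_j b_j\langle v_j,\dott\rangle v_j$ with $\{u_i\}$, $\{v_j\}$ orthonormal and $a_i,b_j\in[a,b]$. The key identity is that, in these two (generally different) orthonormal bases,
\[
\langle u_i,(f(A)-f(B))v_j\rangle = \Phi_{ij}\,\langle u_i,(A-B)v_j\rangle,
\qquad
\Phi_{ij}=\frac{f(a_i)-f(b_j)}{a_i-b_j},
\]
since $\langle u_i,(f(A)-f(B))v_j\rangle=(f(a_i)-f(b_j))\langle u_i,v_j\rangle$ while $\langle u_i,(A-B)v_j\rangle=(a_i-b_j)\langle u_i,v_j\rangle$ (when $a_i=b_j$ both matrix entries vanish, so one may set $\Phi_{ij}=0$). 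The Lipschitz hypothesis \eqref{4.1} gives $\abs{\Phi_{ij}}\le\|f\|_L$. I then use that the Hilbert--Schmidt norm may be computed in any pair of orthonormal bases, $\|M\|_{\calI_2}^2=\sum_{i,j}\abs{\langle u_i,Mv_j\rangle}^2$; applied to $M=f(A)-f(B)$ and bounding each $\abs{\Phi_{ij}}$ by $\|f\|_L$, this yields \eqref{4.2} at once. No differentiability of $f$ is needed—only the Lipschitz bound.

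\emph{Reduction to finite dimensions.} First extend $f$ to a Lipschitz function on all of $\bbR$ with the same constant $\|f\|_L$ (e.g.\ affinely outside $[a,b]$); this leaves $f|_{[a,b]}$, and hence both sides of \eqref{4.2}, unchanged. Fix an orthonormal basis of $\calH$ with finite-rank projections $P_N\uparrow I$ strongly, and set $A_N=P_NAP_N$, $B_N=P_NBP_N$. Since $a\le A\le b$ one has $\sigma(A_N)\subset\{0\}\cup[a,b]$, and on $\ran P_N^\perp$ both $A_N$ and $B_N$ vanish, so $f(A_N)-f(B_N)$ is supported on $\ran P_N$ and the finite-dimensional case applies there, giving
\[
\|f(A_N)-f(B_N)\|_{\calI_2}\le\|f\|_L\,\|A_N-B_N\|_{\calI_2}
=\|f\|_L\,\|P_N(A-B)P_N\|_{\calI_2}\le\|f\|_L\,\|A-B\|_{\calI_2}.
\]

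\emph{Passage to the limit (the main obstacle).} Since $A_N\to A$ and $B_N\to B$ strongly with uniformly bounded norms and common compact spectral set, continuity of the continuous functional calculus under strong convergence gives $f(A_N)-f(B_N)\to f(A)-f(B)$ in the weak operator topology. The previous display shows $\{f(A_N)-f(B_N)\}$ is bounded in the Hilbert space $\calI_2$ by $\|f\|_L\|A-B\|_{\calI_2}$, hence has a weakly $\calI_2$-convergent subsequence whose limit $M$ obeys $\|M\|_{\calI_2}\le\|f\|_L\|A-B\|_{\calI_2}$. Testing weak $\calI_2$-convergence against finite-rank operators (dense in $\calI_2$) reduces to weak operator convergence, which identifies $M=f(A)-f(B)$; equivalently, one invokes weak lower semicontinuity of the $\calI_2$-norm. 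This delivers \eqref{4.2} in general. Reconciling the merely strong (weak operator) convergence of the functional calculus with the Hilbert--Schmidt estimate is the only subtle step, and it is handled precisely by the weak compactness and lower semicontinuity of the $\calI_2$-ball.
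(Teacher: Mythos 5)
Your proposal is correct and follows essentially the same route as the paper: the finite-dimensional case via L\"owner's divided-difference formula computed in the two eigenbases, followed by truncation with finite-rank projections $P_N$ and a passage to the limit. The only divergence is in the final limiting step, where you invoke weak compactness of the ball in $\calI_2$ and lower semicontinuity of its norm, while the paper more elementarily bounds the finite partial sums $\sum_{j=1}^k\|[f(P_NBP_N)-f(P_NAP_N)]\zeta_j\|^2$ and uses strong convergence of the functional calculus directly; both arguments are sound, and your explicit Lipschitz extension of $f$ beyond $[a,b]$ tidies a point the paper leaves implicit.
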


The proof in \cite{BS2} depends on the deep machinery of double Stieltjes operator integrals.
Our two points in this section are:
\begin{SL}
\item[(1)] The inequality for finite matrices is quite elementary and, by limits, extends
to \eqref{4.2}.
\item[(2)] The key to our proof, a kind of ``Double Stieltjes Operator Integral for Dummies,"
goes back to L\"owner \cite{Loe1} in 1934 whose contributions to this theme seem not to
have been appreciated in the literature on double Stieltjes operator integrals.
\end{SL}

Given two finite $m\times m$ self-adjoint matrices $A,B$ with respective eigenvectors
$\{\varphi_j\}_{j=1}^m$ and $\{\psi_j\}_{j=1}^m$ and eigenvalues $\{x_j\}_{j=1}^m$
and $\{y_j\}_{j=1}^m$ such that
\begin{equation} \lb{4.3}
A\varphi_j =x_j\varphi_j,  \qquad
B\psi_j = y_j \psi_j,
\end{equation}
we introduce the (modified) L\"owner matrix of a function $f$ by
\begin{equation} \lb{4.4}
L_{k,\ell} = \begin{cases} \f{f(y_k)-f(x_\ell)}{y_k-x_\ell}, & y_k \neq x_\ell, \\
0, & y_k = x_\ell, \end{cases} \quad 1\leq k,\ell\leq m.
\end{equation}
(L\"owner \cite{Loe1} originally supposed $y_k \neq x_\ell$ for all $1\leq k,\ell \leq m$.)
Clearly, if $f$ is Lipschitz,
\begin{equation} \lb{4.5}
\sup_{1\leq k,\ell\leq m}\, \abs{L_{k,\ell}} \leq \|f\|_L.
\end{equation}

L\"owner noted that since
\begin{equation} \lb{4.6}
f(A)\varphi_j=f(x_j)\varphi_j, \qquad
f(B)\psi_j =f(y_j) \psi_j,
\end{equation}
we have L\"owner's formula:
\begin{equation} \lb{4.7}
\langle\psi_k,[f(B)-f(A)]\varphi_\ell\rangle = L_{k\ell}
\langle\psi_k, (B-A)\varphi_\ell\rangle,
\end{equation}
and this holds even if $y_k=x_\ell$ (since then both matrix elements vanish).
This is the ``baby'' version of the double Stieltjes operator integral formula
$$
f(B)-f(A)=\int_{\sigma(A)}\int_{\sigma(B)} \frac{f(y)-f(x)}{y - x} \,
dE_B(x)(B-A)dE_A(y)
$$
due to Birman and Solomyak  \cite{BS2, BS4}. Here the integration is with respect to
the spectral measures of $A$ and $B$.

L\"owner's formula immediately implies:

\begin{proposition}\lb{P4.2} \eqref{4.2} holds for finite self-adjoint matrices.
\end{proposition}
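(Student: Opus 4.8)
The plan is to compute the Hilbert--Schmidt norm of $f(B)-f(A)$ directly in the pair of orthonormal eigenbases $\{\psi_k\}_{k=1}^m$ and $\{\varphi_\ell\}_{\ell=1}^m$ from \eqref{4.3}, and then feed in L\"owner's formula \eqref{4.7} one matrix entry at a time. The only structural fact I need is that for any $m\times m$ matrix $M$, since the $\psi_k$ and the $\varphi_\ell$ each form an orthonormal basis,
\[
\|M\|_{\calI_2}^2 = \tr(M^*M) = \sum_{\ell=1}^m \|M\varphi_\ell\|^2 = \sum_{k,\ell=1}^m \abs{\langle\psi_k, M\varphi_\ell\rangle}^2 .
\]
This is where the freedom to use a \emph{different} orthonormal system on each side of the inner product pays off: it matches exactly the mixed pairing $\langle\psi_k,(\cdot)\varphi_\ell\rangle$ appearing in \eqref{4.7}.

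Applying this identity with $M=f(B)-f(A)$ and substituting \eqref{4.7} entrywise gives
\[
\|f(B)-f(A)\|_{\calI_2}^2 = \sum_{k,\ell=1}^m \abs{L_{k\ell}}^2\,\abs{\langle\psi_k,(B-A)\varphi_\ell\rangle}^2 .
\]
The uniform bound \eqref{4.5}, namely $\abs{L_{k\ell}}\le\|f\|_L$, then lets me pull the constant out of the sum and re-collapse the remaining sum via the same norm identity, now applied to $M=B-A$:
\[
\|f(B)-f(A)\|_{\calI_2}^2 \le \|f\|_L^2 \sum_{k,\ell=1}^m \abs{\langle\psi_k,(B-A)\varphi_\ell\rangle}^2 = \|f\|_L^2\,\|B-A\|_{\calI_2}^2 .
\]
Taking square roots yields \eqref{4.2} for finite self-adjoint matrices.

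As for the main obstacle: there is essentially none of substance, since all the real content has already been packaged into L\"owner's formula \eqref{4.7} and the entrywise estimate \eqref{4.5}. The single point deserving a moment's care is the basis-independence of the Hilbert--Schmidt norm together with the fact that it may be evaluated using one orthonormal system on each side of the pairing; this is precisely what upgrades the scalar inequality $\abs{L_{k\ell}}\le\|f\|_L$ to the operator inequality with no surviving cross-terms. I would therefore devote one line to justifying the norm identity above and otherwise let the three-line computation stand as the proof.
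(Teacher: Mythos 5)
Your proof is correct and is essentially identical to the paper's: both compute $\|f(B)-f(A)\|_{\calI_2}^2$ in the mixed pair of eigenbases via $\sum_{k,\ell}\abs{\langle\psi_k,\cdot\,\varphi_\ell\rangle}^2$, substitute L\"owner's formula \eqref{4.7} entrywise, and apply the bound \eqref{4.5} before re-collapsing to $\|A-B\|_{\calI_2}^2$. Nothing is missing and no further comment is needed.
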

\begin{proof} Hilbert--Schmidt norms can be computed in any basis, even two
different ones, that is,
\[
\|C\|_{\calI_2}^2 =\sum_{\ell=1}^m \, \|C\varphi_\ell\|^2 =\sum_{k,\ell=1}^m \,
\abs{\langle\psi_k, C\varphi_\ell\rangle}^2.
\]
Thus,
\begin{alignat*}{2}
\|f(A)-f(B)\|_{\calI_2}^2 &= \sum_{k,\ell=1}^m\, \abs{\langle\psi_k, (f(B)-f(A))
  \varphi_\ell\rangle}^2 \notag \\
&=\sum_{k,\ell=1}^m L_{k\ell}^2 \abs{\langle\psi_k, (B-A)\varphi_\ell\rangle}^2
\qquad && \text{by \eqref{4.7}} \\
&\leq \|f\|_L^2 \sum_{k,\ell=1}^m\, \abs{\langle\psi_k, (B-A)\varphi_\ell\rangle}^2
\qquad && \text{by \eqref{4.5}} \\
&= \|f\|_L^2 \|A-B\|_{\calI_2}^2.
\end{alignat*}
\end{proof}

\begin{proof}[Proof of Theorem~\ref{T4.1}] Let $\{\zeta_j\}_{j=1}^\infty$ be an orthonormal
basis for $\calH$ and $P_N$ the orthogonal projections onto the linear span of
$\{\zeta_j\}_{j=1}^N$.\ For any $A$ and $B$ and Lipschitz $f$, by Proposition~\ref{P4.2},
\begin{align}
\|f(P_N BP_N) -f(P_N AP_N)\|_{\calI_2}
&\leq \|f\|_L \|P_N(B-A) P_N\|_{\calI_2} \lb{4.8} \\
&\leq \|f\|_L \|B-A\|_{\calI_2}     \lb{4.9}
\end{align}
if $B-A$ is Hilbert--Schmidt, since
\begin{align}
\|P_N (B-A) P_N\|_{\calI_2}^2 &=\sum_{j=1}^n \, \|P_N (B-A)\zeta_j\|^2 \leq
\sum_{j=1}^n \, \|(B-A)\zeta_j\|^2   \\
& \leq \|B-A\|_{\calI_2}^2.
\end{align}

Thus, for any $k\in\bbN$,
\begin{equation} \lb{4.10}
\sum_{j=1}^k\, \|[f(P_N BP_N) -f(P_N AP_N)]\zeta_j\|^2 \leq \|f\|_L^2
\|B-A\|_{\calI_2}^2.
\end{equation}

As $P_N BP_N \underset{N\to\infty}{\longrightarrow} B$ strongly, one infers by continuity
of the functional calculus that $f(P_N BP_N) \underset{N\to\infty}{\longrightarrow} f(B)$
strongly. Since the sum in \eqref{4.10} is finite, one concludes that
\[
\sum_{j=1}^k\, \|(f(B)-f(A))\zeta_j\|^2 \leq \|f\|_L^2 \|B-A\|_{\calI_2}^2.
\]
Taking $k\to\infty$, we see that $[f(B)-f(A)] \in\calI_2$ and that \eqref{4.2} holds.
\end{proof}

\section{General Construction of the KoSSF $\eta(\dott;A,B)$} \lb{s5}

The general construction and proof of properties of $\eta$ depends first on an approximation
of trace class operators by finite rank ones and then on an approximation of Hilbert--Schmidt
operators by trace class operators. In this section, we mostly follow the approach
of \cite[Lemma 3.3]{Kop}.

\begin{theorem}\lb{T5.1} Let $B_n,B$, $n\in\bbN$, be uniformly bounded self-adjoint operators
such that $B_n\underset{n\to\infty}{\longrightarrow}B$ strongly. Let $X_n,X$, $n\in\bbN$, be
a sequence of self-adjoint trace class operators such that $\|X-X_n\|_{\calI_1}
\underset{n\to\infty}{\longrightarrow} 0$. Then for any continuous function, $g$,
of compact support, we conclude that
\begin{equation} \lb{5.1}
\int_{\bbR} g(\lambda) \eta(\lambda;B_n +X_n,B_n)\, d\lambda \underset{n\to\infty}{\longrightarrow}
\int_{\bbR} g(\lambda) \eta (\lambda; B+X,B)\, d\lambda.
\end{equation}
In particular, $\eta (\dott;A,B)\geq 0$ a.e.\ on $\bbR$ if $(A-B)\in\calI_1$ and,
in that case, \eqref{3.13} holds.
\end{theorem}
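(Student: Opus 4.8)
The plan is to first reduce the weak-convergence statement \eqref{5.1} to smooth test functions, then to establish the decisive trace convergence, and finally to read off positivity and the norm identity \eqref{3.13} from the finite-dimensional results by a finite-rank approximation.

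First I would record two uniform bounds on the signed measures $d\nu_n := \eta(\lambda;B_n+X_n,B_n)\,d\lambda$ (and the analogous limit measure). Since the $B_n$ are uniformly bounded and $\|X_n\|_{\calI_1}$ is bounded, all the spectra $\sigma(B_n)\cup\sigma(B_n+X_n)$ lie in a common compact interval $[a,b]$, so by the Remark following Theorem~\ref{T2.4} every $\nu_n$ is supported in $(a,b)$. From \eqref{2.12}, together with $\|\mu_{B_n,X_n}\|\le\|X_n\|_{\calI_1}$ and the Krein bound \eqref{3.14}, one gets $\sup_\lambda|\eta(\lambda;B_n+X_n,B_n)|\le 2\|X_n\|_{\calI_1}$, hence a uniform bound on the total variations $\|\nu_n\|$. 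Because $\{f'':f\in C^\infty(\bbR)\}$ restricts to a sup-norm dense subset of $C([a,b])$, the standard uniform-boundedness argument reduces \eqref{5.1} to the case $g=f''$ with $f\in C^\infty(\bbR)$.

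For such $f$, I would use \eqref{2.13} and \eqref{2.14} together with $\f{d}{d\alpha}\tr(f(B_n+\alpha X_n))=\tr(X_nf'(B_n+\alpha X_n))$ to write, with $A_n=B_n+X_n$,
\[
\int_{\bbR} f''(\lambda)\,\eta(\lambda;A_n,B_n)\,d\lambda
=\int_0^1\big[\tr(X_n f'(B_n+\alpha X_n))-\tr(X_n f'(B_n))\big]\,d\alpha .
\]
The heart of the matter is then the elementary but crucial lemma: if $Y_n\to Y$ in $\calI_1$ and $C_n\to C$ strongly with $\sup_n\|C_n\|<\infty$, then $\tr(Y_nC_n)\to\tr(YC)$. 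Indeed, $\tr(Y_nC_n)-\tr(YC)=\tr((Y_n-Y)C_n)+\tr(Y(C_n-C))$; the first term is bounded by $\|Y_n-Y\|_{\calI_1}\sup_n\|C_n\|\to0$, while the second tends to $0$ by dominated convergence applied to $\tr(Y(C_n-C))=\sum_j\mu_j\langle\varphi_j,(C_n-C)\psi_j\rangle$ over the canonical expansion of $Y$. Applying this with $Y_n=X_n$ and $C_n=f'(B_n+\alpha X_n)$, which converges strongly to $f'(B+\alpha X)$ by strong continuity of the functional calculus for uniformly bounded, strongly convergent self-adjoint operators, followed by dominated convergence in $\alpha$, yields \eqref{5.1}.

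Finally, for the ``in particular'' claim I would take $B_n=P_nBP_n$ and $X_n=P_nXP_n$ with $P_n$ finite-rank orthogonal projections increasing strongly to $I$; then $B_n\to B$ strongly, $\|X-X_n\|_{\calI_1}\to0$, and on $\ran P_n$ the pair $(A_n,B_n)$ is a pair of finite matrices acting trivially on $(\ran P_n)^\perp$, so \eqref{2.13} identifies $\eta(\dott;A_n,B_n)$ with the finite-dimensional KoSSF. Corollary~\ref{C3.3} gives $\eta(\dott;A_n,B_n)\ge0$, whence $\int g\,\eta(\dott;A,B)\ge0$ for every continuous $g\ge0$ by \eqref{5.1}, i.e.\ $\eta(\dott;A,B)\ge0$ a.e. Choosing $g$ continuous with $g\equiv1$ on $[a,b]$ and using Theorem~\ref{T3.4} in finite dimensions, $\int\eta(\dott;A_n,B_n)\,d\lambda=\tfrac12\|X_n\|_{\calI_2}^2\to\tfrac12\|X\|_{\calI_2}^2$ (as $\|X-X_n\|_{\calI_2}\le\|X-X_n\|_{\calI_1}\to0$), and \eqref{5.1} identifies the limit with $\int\eta(\dott;A,B)\,d\lambda$, giving \eqref{3.13} since $\eta\ge0$. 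I expect the main obstacle to be precisely the second trace limit $\tr(Y(C_n-C))\to0$: because $f'(B_n)\to f'(B)$ only strongly and not in norm, one genuinely needs the trace-class structure of $Y$ together with the dominated-convergence estimate over its singular values.
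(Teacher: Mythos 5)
Your proof is correct, but it reaches \eqref{5.1} by a genuinely different route than the paper. The paper never reduces to smooth test functions: it works with continuous $g$ directly by splitting $\eta$ according to \eqref{2.12} and treating the two pieces separately --- the integrated KrSSF term is handled by Fubini together with Theorem~\ref{TA.6} (the convergence theorem for the KrSSF already established in the appendix), while the term $\mu_{B_n,X_n}((-\infty,\lambda))$ is handled by showing $d\mu_{B_n,X_n}\to d\mu_{B,X}$ weakly, which is proven by exactly the estimate you isolate as your key lemma, namely $\abs{\tr(X_nf(B_n))-\tr(Xf(B))}\le\abs{\tr(X[f(B_n)-f(B)])}+\|f\|_\infty\|X-X_n\|_{\calI_1}$. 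You instead bypass the decomposition \eqref{2.12} in the convergence argument (using it only for the a priori bound $\sup_\lambda\abs{\eta(\lambda;B_n+X_n,B_n)}\le2\|X_n\|_{\calI_1}$), reduce to $g=f''$ with $f$ smooth via the uniform total-variation bounds, and represent the Koplienko trace as the coupling-constant integral $\int_0^1\big[\tr(X_nf'(B_n+\alpha X_n))-\tr(X_nf'(B_n))\big]\,d\alpha$, passing to the limit pointwise in $\alpha$ and then by dominated convergence. What this buys is independence from the KrSSF convergence theory (Theorem~\ref{TA.6}); what it costs is the extra density step and the justification that $\alpha\mapsto\tr(f(B_n+\alpha X_n)-f(B_n))$ is $C^1$ with derivative $\tr(X_nf'(B_n+\alpha X_n))$ so that the fundamental theorem of calculus applies --- this is legitimately available from Proposition~\ref{P2.1} and \eqref{2.14} applied at the base point $B_n+\alpha X_n$, but you should say so explicitly, and note that one must differentiate the trace of the \emph{difference} $f(B_n+\alpha X_n)-f(B_n)$, since $f(B_n)$ alone need not be trace class. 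The positivity argument via $B_n=P_nBP_n$, $X_n=P_nXP_n$ is identical to the paper's; for \eqref{3.13} the paper argues slightly more directly (repeating the proof of Theorem~\ref{T3.4} with $f(x)=\tfrac12x^2$, now that positivity and \eqref{2.13} hold for $(A-B)\in\calI_1$), whereas you take limits of the finite-dimensional identity using $\|X-X_n\|_{\calI_2}\le\|X-X_n\|_{\calI_1}\to0$ --- both work.
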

\begin{proof}  By Theorem~\ref{TA.6} and
\[
\int_{\bbR} g(\lambda) \biggl(\int_{-\infty}^\lambda \xi (\lambda'; A,B)\, d\lambda'\biggr)\, 
d\lambda = \int_{-\infty}^\infty \xi (\lambda'; A,B)\biggl( \int_{\lambda'}^\infty g(\lambda)\, 
d\lambda\biggr) \, d\lambda'
\]
we get convergence of the second term in \eqref{2.12}.
By \eqref{2.6},
\[
d\mu_{B_n,X_n} \underset{n\to\infty}{\longrightarrow} d\mu_{B,X}
\]
weakly by the strong continuity of the functional calculus since
\begin{align*}
& \abs{\tr (X_n f(B_n))-\tr(Xf(B))}   \\
& \quad \leq \abs{\tr(X[f(B_n)-f(B)])} +
\|f\|_\infty \|X-X_n\|_{\calI_1} \underset{n\to\infty}{\longrightarrow} 0,
\end{align*}
as $f$ is continuous.

Since weak limits of positive measures are positive, the positivity follows from positivity
in the finite-dimensional case taking $B_n=P_n BP_n$ and $X_n=P_n XP_n$ for finite-dimensional
$P_n$ converging strongly to $I$, the identity operator.

Once we have positivity, we obtain \eqref{3.13} directly by following the proof of
Theorem~\ref{T3.4}.
\end{proof}

\begin{theorem}\lb{T5.2} Let $A,B,C$ be bounded self-adjoint operators such that
$(A-C)\in\calI_1$ and $(B-C)\in\calI_1$. Then
\begin{equation} \lb{5.2}
\int_{\bbR} \abs{\eta (\lambda;A,C)-\eta(\lambda;B,C)}\, d\lambda \leq \|A-B\|_{\calI_2}
\big[\tfrac12\, \|A-B\|_{\calI_2} + \|B - C\|_{\calI_2}\big].
\end{equation}
\end{theorem}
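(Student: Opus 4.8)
The plan is to reduce \eqref{5.2} to the corrected chain rule \eqref{1.7a} together with the two estimates already assembled in Sections~\ref{s3}--\ref{s4}. Rearranging \eqref{1.7a} gives
\[
\eta(\dott;A,C) - \eta(\dott;B,C) = \eta(\dott;A,B) + \delta\eta(\dott;A,B,C),
\]
so the triangle inequality in $L^1(\bbR)$ yields
\[
\int_{\bbR} |\eta(\lambda;A,C) - \eta(\lambda;B,C)|\,d\lambda \leq \int_{\bbR}|\eta(\lambda;A,B)|\,d\lambda + \int_{\bbR}|\delta\eta(\lambda;A,B,C)|\,d\lambda.
\]
Since $(A-B)=(A-C)-(B-C)\in\calI_1$, all three functions $\eta$ are the compactly supported, nonnegative $L^1$-functions furnished by Theorem~\ref{T5.1}, so $\delta\eta$ is a compactly supported $L^1$-function and both integrals on the right are finite. (The identity \eqref{1.7a} itself follows from the defining formula \eqref{2.13}: subtracting the three trace expressions for the pairs $(A,C)$, $(A,B)$, $(B,C)$ cancels all $f(\dott)$ terms and leaves exactly $\tr((A-B)(f'(B)-f'(C)))$, which is \eqref{1.8} with $g=f'$.)

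The first integral is evaluated exactly. By Theorem~\ref{T3.4}, extended to the trace class case via Theorem~\ref{T5.1},
\[
\int_{\bbR}|\eta(\lambda;A,B)|\,d\lambda = \tfrac12\,\|A-B\|_{\calI_2}^2,
\]
which already supplies the leading term $\tfrac12\|A-B\|_{\calI_2}^2$ on the right of \eqref{5.2}.

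The core of the argument is the second integral, and it is here that the Birman--Solomyak bound \eqref{4.2} enters, as promised in the Introduction. I would estimate $\|\delta\eta\|_{L^1}$ by duality,
\[
\int_{\bbR}|\delta\eta(\lambda;A,B,C)|\,d\lambda = \sup\Big\{\Big|\int_{\bbR} h(\lambda)\,\delta\eta(\lambda;A,B,C)\,d\lambda\Big| : h\in C_c^\infty(\bbR),\ \|h\|_\infty\leq 1\Big\}.
\]
For such an $h$, set $g(\lambda)=\int_{-\infty}^{\lambda} h(s)\,ds$, so that $g$ is smooth, $g'=h$, and $\|g\|_L=\|g'\|_\infty\leq 1$. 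Then \eqref{1.8} (with this $g$) gives $\int_{\bbR} h\,\delta\eta\,d\lambda=\tr((A-B)(g(B)-g(C)))$, and the Cauchy--Schwarz inequality for Hilbert--Schmidt operators followed by Theorem~\ref{T4.1} yields
\[
|\tr((A-B)(g(B)-g(C)))| \leq \|A-B\|_{\calI_2}\,\|g(B)-g(C)\|_{\calI_2} \leq \|g\|_L\,\|A-B\|_{\calI_2}\,\|B-C\|_{\calI_2} \leq \|A-B\|_{\calI_2}\,\|B-C\|_{\calI_2}.
\]
Taking the supremum over $h$ bounds the second integral by $\|A-B\|_{\calI_2}\|B-C\|_{\calI_2}$; adding the two contributions and factoring out $\|A-B\|_{\calI_2}$ reproduces \eqref{5.2} exactly.

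The step I expect to require the most care is the passage from the smooth test functions for which \eqref{1.8} is established to the supremum defining $\|\delta\eta\|_{L^1}$. The remedy is the approximation indicated above: it suffices to test against $h\in C_c^\infty$ (which are dense enough to compute the $L^1$-norm of the $L^1$-function $\delta\eta$), and the smooth antiderivative $g$ is admissible in \eqref{1.8} because $B$ and $C$ are bounded, so only the restriction of $g$ to a compact interval containing $\sigma(B)\cup\sigma(C)$ enters $g(B)$ and $g(C)$; on that interval the Lipschitz seminorm governing \eqref{4.2} equals $\|g'\|_\infty\leq 1$.
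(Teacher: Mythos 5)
Your proposal is correct and follows essentially the same route as the paper: the decomposition via the corrected chain rule \eqref{1.7a}, the exact evaluation $\int_{\bbR}\abs{\eta(\lambda;A,B)}\,d\lambda = \tfrac12\|A-B\|_{\calI_2}^2$ from \eqref{3.13} (extended to the trace class case by Theorem~\ref{T5.1}), and the duality estimate of $\|\delta\eta\|_1$ via \eqref{1.8}, Cauchy--Schwarz in $\calI_2$, and the Birman--Solomyak bound of Theorem~\ref{T4.1}. The only cosmetic differences are that you test against $h\in C_c^\infty$ with an explicit smooth antiderivative where the paper tests against bounded $C^\infty$ functions dense in the bounded continuous ones, and that you spell out the derivation of \eqref{1.7a}/\eqref{1.8} from \eqref{2.13}, which the paper merely asserts.
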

\begin{proof} We begin with \eqref{1.7a} which follows from the fact that \eqref{2.13}
holds when $(A-B)\in\calI_1$. Here \eqref{1.8} holds for nice functions $g$, say,
$g\in C^\infty(\bbR)$. Thus,
\begin{equation}
\text{LHS of \eqref{5.2}} \leq \int_{\bbR} \abs{\eta (\lambda; A,B)}\, d\lambda
+ \int_{\bbR} \abs{\delta\eta (\lambda;A,B,C)}\, d\lambda.   \lb{5.3}
\end{equation}
By \eqref{3.13},
\begin{equation}
\text{First term on RHS of \eqref{5.3}} \leq
\tfrac12\, \|A-B\|_{\calI_2} \, \|A-B\|_{\calI_2}.   \lb{5.4}
\end{equation}

As for $\delta\eta$, by \eqref{1.8},
\begin{align*}
\biggl| \int_{\bbR} g'(\lambda) \delta\eta (\lambda)\,d\lambda \biggr|
&\leq \|A-B\|_{\calI_2} \, \|g(B)-g(C)\|_{\calI_2} \\
&\leq \|g'\|_\infty \|A-B\|_{\calI_2} \, \|B-C\|_{\calI_2}
\end{align*}
by Theorem~\ref{T4.1}. Since $\delta\eta\in L^1(\bbR)$ and the bounded
$C^\infty(\bbR)$-functions are $\|\cdot\|_\infty$-dense in the bounded
continuous functions, and for $h\in L^1(\bbR)$,
\[
\|h\|_1 =\sup_{\substack{f\in C(\bbR) \\ \|f\|_\infty =1}} \,
\biggl| \int_{\bbR} f(x) h(x) \, dx \biggr|,
\]
we conclude
\begin{equation} \lb{5.5}
\|\delta\eta\|_1 \leq \|A-B\|_{\calI_2} \, \|B - C\|_{\calI_2}.
\end{equation}
Relations \eqref{5.3}--\eqref{5.5} imply \eqref{5.2}.
\end{proof}

Here is the main theorem on the existence of the KoSSF:

\begin{theorem}\lb{T5.3} Let $A,B$ be two bounded self-adjoint operators with
$(B-A) \in\calI_2$. Then there exists a unique $L^1(\bbR;d\lambda)$-function
$\eta (\dott; A,B)$ supported on $(-\max (\|A\|, \|B\|), \max (\|A\|,\|B\|))$
such that for any $g\in C^\infty(\bbR)$,
\begin{equation} \lb{5.6}
\bigg(g(A)-g(B)- \f{d}{d\alpha}\, g(B+\alpha (A-B))\bigg|_{\alpha =0}\bigg)
\in\calI_1
\end{equation}
and
\begin{equation} \lb{5.7}
\tr\bigg(g(A)-g(B)- \f{d}{d\alpha}\, g(B+\alpha (A-B))\bigg|_{\alpha =0}\bigg)
 =\int_{\bbR} g''(\lambda) \eta(\lambda;A,B)\, d\lambda.
\end{equation}
Moreover,
\begin{gather}
 \eta(\dott;A,B) \geq 0 \, \text{ a.e.\ on $\bbR$,}   \\
 \int_{\bbR} \abs{\eta (\lambda;A,B)}\, d\lambda = \tfrac12\, \|A-B\|_{\calI_2}^2,  \lb{5.8}
\end{gather}
and for any bounded self-adjoint operators $A,B,C$ with $(A-C) \in\calI_2$ and
$(B-C) \in\calI_2$,
\begin{equation} \lb{5.9}
\int_{\bbR} \abs{\eta(\lambda;A,C) - \eta (\lambda;B,C)}\, d\lambda \leq \|A-B\|_{\calI_2}
\big[\tfrac12\, \|A-B\|_{\calI_2} + \|B-C\|_{\calI_2}\big].
\end{equation}
\end{theorem}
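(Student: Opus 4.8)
The plan is to build $\eta(\dott;A,B)$ for a Hilbert--Schmidt perturbation $X=A-B$ by approximating $X$ from within the trace class and passing to a limit in $L^1(\bbR)$, transporting each asserted property from the trace class case (Theorems~\ref{T2.4}, \ref{T5.1}, and \ref{T5.2}) to the limit. Concretely, I would fix self-adjoint finite rank operators $X_n$ with $\|X_n-X\|_{\calI_2}\to0$ and set $A_n=B+X_n$, so that $(A_n-B)=X_n\in\calI_1$ and $\eta(\dott;A_n,B)$ is already defined by \eqref{2.12}. Since $\|\dott\|\le\|\dott\|_{\calI_2}$, we also get $A_n\to A$ in operator norm, hence $\|A_n\|\to\|A\|$, which controls supports below.

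The first step is to show that $\{\eta(\dott;A_n,B)\}_n$ is Cauchy in $L^1(\bbR)$. Applying Theorem~\ref{T5.2} to the triple $(A_n,A_m,B)$ gives
\[
\int_{\bbR}\abs{\eta(\lambda;A_n,B)-\eta(\lambda;A_m,B)}\,d\lambda
\le \|X_n-X_m\|_{\calI_2}\bigl[\tfrac12\|X_n-X_m\|_{\calI_2}+\|X_m\|_{\calI_2}\bigr],
\]
whose right-hand side tends to $0$ since $\|X_n-X_m\|_{\calI_2}\to0$ and $\|X_m\|_{\calI_2}$ is bounded. Thus $\eta(\dott;A,B):=\lim_n\eta(\dott;A_n,B)$ exists in $L^1(\bbR)$, and the same estimate applied to two approximating sequences shows the limit is independent of the choice of $X_n$, so $\eta(\dott;A,B)$ is well defined. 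Because $\max(\|A_n\|,\|B\|)\to\max(\|A\|,\|B\|)=:M$, each $\eta(\dott;A_n,B)$ is eventually supported in $[-(M+\veps),M+\veps]$, and $L^1$-convergence forces $\eta(\dott;A,B)$ to be supported in $[-M,M]$, i.e.\ (up to a null set) in $(-M,M)$.

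The delicate step is the trace formula \eqref{5.6}--\eqref{5.7}, which I expect to be the main obstacle. After replacing $g$ by a compactly supported $C^\infty$ function agreeing with it on a neighborhood of $[-M,M]$ (which alters none of the relevant quantities, as only the values on $\sigma(A)\cup\sigma(B)$ enter), condition \eqref{2.5} holds, and the $n=2$ case of Proposition~\ref{P2.1} with $g(A)=(2\pi)^{-1/2}\int_{\bbR}\hatt g(t)\,e^{itA}\,dt$ yields \eqref{5.6} together with
\[
\bigl\|g(A)-g(B)-\tfrac{d}{d\alpha}g(B+\alpha X)\big|_{\alpha=0}\bigr\|_{\calI_1}
\le C\int_{\bbR}t^2\,\abs{\hatt g(t)}\,dt\;\|X\|_{\calI_2}^2 .
\]
To pass the trace formula for $(A_n,B)$ to the limit, write $g_t(A,B)=e^{itA}-e^{itB}-\tfrac{d}{d\alpha}e^{it(B+\alpha X)}|_{\alpha=0}$. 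Using the integral representation \eqref{2.1} for the two explicit perturbation factors and DuHamel's formula for the differences of the unitary propagators, one telescopes to obtain a bound of the form
\[
\|g_t(B+X_n,B)-g_t(B+X,B)\|_{\calI_1}\le C(t)\,\|X_n-X\|_{\calI_2}\bigl(\|X_n\|_{\calI_2}+\|X\|_{\calI_2}\bigr),
\]
with $C(t)$ of polynomial growth. Integrating against $\hatt g$ shows that the operator inside the trace in \eqref{5.7}, formed with $(A_n,B)$, converges in $\calI_1$ to the one for $(A,B)$, so the left-hand sides of \eqref{5.7} converge; the right-hand sides $\int_{\bbR}g''\,\eta(\dott;A_n,B)$ converge to $\int_{\bbR}g''\,\eta(\dott;A,B)$ by $L^1$-convergence and boundedness of $g''$ on the common support. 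Identifying the two limits establishes \eqref{5.7}. This telescoping $\calI_1$-continuity estimate is routine given \eqref{2.1} but is the technical heart of the argument.

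The remaining claims follow by limits. Positivity $\eta(\dott;A,B)\ge0$ a.e.\ holds since each $\eta(\dott;A_n,B)\ge0$ (Theorem~\ref{T5.1}) and $L^1$-limits of nonnegative functions are nonnegative a.e.\ (pass to an a.e.-convergent subsequence). For \eqref{5.8}, apply \eqref{5.7} to a compactly supported $g$ equal to $\tfrac12\lambda^2$ near $[-M,M]$; as in the proof of Theorem~\ref{T3.4} the operator inside the trace is $\tfrac12 X^2\in\calI_1$, so the left side equals $\tr(\tfrac12 X^2)=\tfrac12\|X\|_{\calI_2}^2$ while the right side is $\int_{\bbR}\eta(\dott;A,B)=\int_{\bbR}\abs{\eta(\dott;A,B)}$ by positivity. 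Estimate \eqref{5.9} is obtained by approximating $A-C$ and $B-C$ in $\calI_2$ by trace class operators, applying Theorem~\ref{T5.2} to the approximants, and letting the approximation parameter tend to $0$, using the $L^1$-convergence built into the construction of $\eta$ together with $\calI_2$-continuity of the right-hand side. Finally, uniqueness follows because \eqref{5.7} forces $\int_{\bbR}g''(\eta_1-\eta_2)\,d\lambda=0$ for all $g\in C^\infty(\bbR)$, so $\eta_1-\eta_2$ is affine a.e.; the compact support requirement then gives $\eta_1=\eta_2$.
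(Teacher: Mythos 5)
Your proposal is correct and follows essentially the same route as the paper: approximate $X=A-B$ by trace class operators $X_n$, use the stability estimate of Theorem~\ref{T5.2} to show $\eta(\dott;B+X_n,B)$ is Cauchy in $L^1(\bbR;d\lambda)$, define $\eta(\dott;A,B)$ as the limit, and transport \eqref{5.7}--\eqref{5.9} and positivity to the limit. The only difference is that you spell out details the paper compresses into ``holds by taking limits''---notably the telescoping DuHamel/H\"older estimate giving $\calI_1$-convergence of the operators inside the trace, and the derivation of \eqref{5.8} directly from \eqref{5.7} with $g=\tfrac12\lambda^2$ near the spectra---both of which are sound.
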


\begin{remark} For a sharp condition on the class of functions $\eta$ for which
Koplienko's trace formula holds, we refer to Peller \cite{Pe05}.
\end{remark}

\begin{proof} Let $X=A-B$ and pick $X_n\in\calI_1$, $n\in\bbN$, such that
$\|X_n-X\|_{\calI_2} \underset{n\to\infty}{\longrightarrow} 0$. By \eqref{5.9},
which we have proven for
\[
\eta (\lambda;B+X_n,B)-\eta (\lambda;B+X_m,B),
\]
we see that $\eta (\dott;B+X_n,B)$ is Cauchy in $L^1(\bbR;d\lambda)$ and so converges
a.e.\ to what we will define as
$\eta (\dott;A,B)$. \eqref{5.7} holds by taking limits; $\eta\geq 0$ as a limit of positive
functions $\eta$. \eqref{5.8} and \eqref{5.9} hold by taking limits.
\end{proof}

\begin{remark}
Here is an alternative method of proving the estimate (5.2), bypassing Theorem 5.1:
In the same way as in Corollary 3.3, one can deduce positivity
of $\eta(\dott;A,B)$ for $(A-B)\in\calI_1$ from Theorem 3.6.
The only place where Theorem 5.1 is used in the proof of Theorem 5.2 is
in the estimate (5.4). This estimate (see Theorem 3.4) follows directly from
the positivity of $\eta$ and the trace  formula (2.15).
\end{remark}

\section{What Functions $\eta$ are Possible?}  \lb{s5a}

We introduce the classes of functions
\begin{align*}
\eta(\calI_2)&=\{\eta(\dott;A,B)\,|\, A, B \text{ bounded and self-adjoint,} \, (A-B) \in\calI_2\},\\
\eta(\calI_1)&=\{\eta(\dott;A,B)\,|\, A, B \text{ bounded and self-adjoint,} \, (A-B) \in\calI_1\}.
\end{align*}
In this section we would like to raise the question of the description of
the classes $\eta(\calI_2)$ and $\eta(\calI_1)$.

Since, for now, we are considering $A,B$ bounded, $\eta$ has compact support.

First we discuss the class $\eta(\calI_2)$. By Theorem~\ref{T5.3},
all functions of this class are nonnegative and Lebesgue integrable.
It would be interesting to see if the class $\eta(\calI_2)$ contains all
nonnegative Lebesgue integrable functions.

As a step towards answering this question, we give the following elementary
result:

\begin{theorem} \label{t6.1}
The class  $\eta(\calI_2)$ contains all nonnegative Riemann integrable functions
of compact support.
\end{theorem}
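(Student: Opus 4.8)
The goal is to realize an arbitrary nonnegative Riemann integrable function $h$ of compact support as $\eta(\dott;A,B)$ for suitable bounded self-adjoint $A,B$ with $(A-B)\in\calI_2$. My plan is to build such a pair explicitly out of finite-dimensional blocks and then take an orthogonal direct sum. The key structural fact I would exploit is the additivity of $\eta$ under direct sums: if $A=\bigoplus_k A_k$ and $B=\bigoplus_k B_k$ act on $\calH=\bigoplus_k\calH_k$, then, because the trace in \eqref{5.7} splits over the summands, one has $\eta(\dott;A,B)=\sum_k\eta(\dott;A_k,B_k)$, provided the right-hand side converges in $L^1$. Combined with \eqref{5.8}, convergence in $L^1$ is controlled by $\sum_k\|A_k-B_k\|_{\calI_2}^2=\|A-B\|_{\calI_2}^2$, so the direct sum will be a legitimate Hilbert--Schmidt perturbation exactly when this sum is finite.

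\textbf{The basic building block.} The first step is to identify the simplest $\eta$ one can produce: a single "bump." I would compute $\eta$ for a rank-one (or $2\times2$) perturbation. The cleanest model is a $2\times 2$ pair, $B=\diag(a,b)$ and $A=B+X$ with $X$ a small self-adjoint matrix. Using \eqref{2.12}, or by direct computation from \eqref{5.7} with $f(x)=\tfrac12 x^2$ as in the proof of Theorem~\ref{T3.4}, one finds that $\eta(\dott;A,B)$ is a nonnegative function supported on roughly the interval spanned by the eigenvalues involved, with total mass $\tfrac12\|X\|_{\calI_2}^2$ by \eqref{5.8}. What I want from this computation is a family of nonnegative $\eta$'s that approximate (constant multiples of) indicator functions $c\,\chi_{[s,t]}$ of small intervals: by choosing the block appropriately I expect to produce an $\eta$ that is close to a prescribed height on a prescribed short interval, with $\calI_2$-cost of the perturbation of the order of $(\text{height}\times\text{length})^{1/2}$.

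\textbf{Assembling a step function, then passing to the limit.} Since $h$ is Riemann integrable of compact support, it is a uniform-in-$L^1$ limit of step functions: for each $\veps$ there is a step function $s=\sum_{k=1}^{N} c_k\chi_{I_k}$ with the $I_k$ disjoint subintervals and $c_k\geq 0$, such that $\|h-s\|_{L^1}<\veps$. The second step is to realize such a step function (or a good approximation of it) as a finite direct sum of the basic blocks above, one block per interval $I_k$, giving a finite-rank perturbation $(A-B)\in\calI_1\subset\calI_2$ whose $\eta$ approximates $s$. The third step is to remove the two approximation errors simultaneously: take a sequence $h_n\to h$ in $L^1$ of such step-function-realizable $\eta$'s, and realize each $h_n-h_{n-1}$ as an additional orthogonal block summand. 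By the Riemann integrability of $h$ I can arrange $\|h_n-h_{n-1}\|_{L^1}\to 0$ fast enough that the telescoping total $\calI_2$-mass $\sum_n\tfrac12\|X_n\|_{\calI_2}^2=\sum_n\|h_n-h_{n-1}\|_{L^1}$ (using \eqref{5.8} on each block) is finite; then the infinite direct sum defines a genuine Hilbert--Schmidt perturbation and, by the direct-sum additivity together with $L^1$-convergence, its $\eta$ equals $\lim_n h_n=h$.

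\textbf{Main obstacle.} The delicate point is the first step: controlling the \emph{shape} of the building-block $\eta$, not just its mass. Positivity and the mass identity \eqref{5.8} are automatic, but I must verify that a single block genuinely concentrates $\eta$ near a chosen interval with approximately the right profile, so that finitely many blocks add up (using additivity over the direct sum) to something $L^1$-close to the target step function rather than to some smeared-out nonnegative function of the same mass. This is where an explicit low-rank computation is unavoidable, and where Riemann (rather than merely Lebesgue) integrability is used: it guarantees the target is well approximated by step functions on intervals, which is precisely the regularity matched by the interval-supported blocks. Once the single-block shape is pinned down, the direct-sum bookkeeping and the two limiting arguments are routine applications of \eqref{5.8} and \eqref{5.9}.
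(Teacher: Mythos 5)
Your architecture---orthogonal direct sums of $2\times2$ blocks, additivity of $\eta$ over the summands, and $L^1$-control of limits via \eqref{5.8} and \eqref{5.9}---is exactly the paper's, but the keystone you defer (``an explicit low-rank computation is unavoidable'') is precisely where the proof lives, and your stated expectation for it is not correct as formulated. A single $2\times2$ block cannot produce $c\,\chi_{[s,t]}$ with the height $c$ and the length $t-s$ prescribed independently. Indeed, write $B=\diag(s,t)$ and ask when $\eta(\dott;B+X,B)=c\,\chi_{(s,t)}$: by \eqref{2.12}, the jumps of $\eta$ are the atoms $X_{11},X_{22}$ of $\mu_{B,X}$, and flatness of $\eta$ away from the jumps forces $\xi\equiv0$, i.e.\ $B+X$ and $B$ isospectral; equating traces and determinants then gives $X_{11}=c=-X_{22}$ and $\abs{X_{12}}^2=c(t-s-c)$, which is solvable if and only if $c\leq t-s$. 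So the flat profiles realizable by one block have height at most their width; the paper uses only the extreme case $c=t-s$, namely the swap block $B=\diag(a-\veps,a+\veps)$, $A=\diag(a+\veps,a-\veps)$ with $\eta=2\veps\chi_{(a-\veps,a+\veps)}$, i.e.\ the ``square'' profiles $\abs{I}\chi_I$; taller profiles must be assembled by stacking several blocks over the same interval. Consequently, the real substance of the theorem is the real-variable statement that a nonnegative Riemann integrable $h$ of compact support is an $L^1$-convergent sum $\sum_n\abs{I_n}\chi_{I_n}$ of square profiles. The paper proves this by fitting finitely many disjoint open squares under the graph of $h$ that capture at least half of $\int h$ (this is exactly where Riemann integrability is used), subtracting, and iterating, so the remainder after $m$ steps has mass at most $2^{-m}\int h$. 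Nothing in your proposal supplies this exhaustion-by-squares argument or a substitute for it.

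There is a second, smaller gap in your limiting step: you propose to realize each increment $h_n-h_{n-1}$ as an additional orthogonal summand, but every KoSSF is nonnegative, so this is possible only if the approximating sequence is monotone nondecreasing; for a general $L^1$-approximating sequence of step functions the increments are signed and cannot be the $\eta$ of any block. The fix is to approximate from below (e.g., lower sums over nested partitions), and then the summability you worry about is automatic by telescoping: $\sum_n\|h_n-h_{n-1}\|_{L^1}=\lim_n\int h_n-\int h_0\leq\int h$, so the total cost $\sum_n\|X_n\|_{\calI_2}^2=2\sum_n\|h_n-h_{n-1}\|_{L^1}$ is finite with no rate assumption needed. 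But producing a monotone approximation whose increments are sums of \emph{realizable} square profiles is, once again, the exhaustion-by-squares argument; both gaps close at the same point, and that point is the paper's actual proof.
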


\begin{remark} A contemporary account of the theory of Riemann integrable functions
can be found, for instance, in Stein--Shakarchi \cite{SS}.
\end{remark}

\begin{proof}
First we consider a simple example. Let $a\in \bbR$ and $\varepsilon>0$; consider the
operators in $\bbC^2$ given by the diagonal $2\times 2$ matrices
$B=\diag(a-\varepsilon,a+\varepsilon)$ and $A=\diag(a+\varepsilon,a-\varepsilon)$.
Then the KoSSF for this pair is given by (cf.\ \eqref{2.12})
$\eta(\lambda)=2\varepsilon \chi_{(a-\epsilon, a+\epsilon)}(\lambda)$.
We  note that
$$
\int_{\bbR}\eta(\lambda) \, d\lambda =4\varepsilon^2=\tfrac12 \tr ((A-B)^2),
$$
in agreement with \eqref{5.8}.

Next, suppose that $0\leq \eta\in L^1(\bbR;d\lambda)$ is represented by the
$L^1(\bbR;d\lambda)$-convergent series
\begin{equation}
\eta(\lambda)=\sum_{n=1}^\infty
\abs{I_n}\chi_{I_n}(\lambda),
\label{a3}
\end{equation}
where $I_n\subset \bbR$ are (not necessarily disjoint) finite intervals and
$\abs{I_n}$ is the length of $I_n$.
Denote by $a_n$ the midpoint of $I_n$ and let $\varepsilon_n=\frac12\abs{I_n}$.
We introduce
$$B=\oplus_{n=1}^\infty\diag(a_n-\varepsilon_n,a_n+\varepsilon_n) \, \text{ and } \,
A=\oplus_{n=1}^\infty\diag(a_n+\varepsilon_n,a_n-\varepsilon_n)
$$
in the Hilbert space   $\oplus_{n=1}^\infty \bbC^2$.
Note that the $L^1(\bbR;d\lambda)$-convergence of the series \eqref{a3} is equivalent to the
condition $\sum_{n=1}^\infty \varepsilon_n^2<\infty$ and so
 $A-B=\oplus_{n=1}^\infty\diag(2\varepsilon_n,-2\varepsilon_n)$
is a Hilbert--Schmidt operator.
It is clear that the
KoSSF for the pair $A, B$  coincides with $\eta$.

Thus, it suffices to prove that any Riemann integrable function
$0\leq \eta\in L^1(\bbR;d\lambda)$
can be represented as an $L^1(\bbR;d\lambda)$-convergent series \eqref{a3}.

Let $0\leq \eta\in L^1(\bbR;d\lambda)$ be Riemann integrable.
According to the definition of the Riemann integral,
there exists a finite set of disjoint open squares
$Q_n$, $n\in \{1,\dots,M\}$, which fit under the graph of $\eta$ and
$$
\sum_{n=1}^M \text{area}(Q_n)\geq\tfrac12 \int_{\bbR} \eta(\lambda) \, d\lambda.
$$
In other words, there exists a finite set of (not necessarily disjoint)
open intervals $I_n\subset \bbR$, $n\in \{1,\dots,N\}$,
such that
\begin{align*}
& \sum_{n=1}^N \abs{ I_n} \chi_{I_n}(\lambda)\leq \eta(\lambda),
\quad \lambda\in\bbR,
\\
& \int_{\bbR} \bigg(\sum_{n=1}^N \abs{ I_n} \chi_{I_n}(\lambda) \bigg)  d\lambda
= \sum_{n=1}^N \abs{ I_n}^2\geq \tfrac12 \int_{\bbR} \eta(\lambda) \, d\lambda.
\end{align*}
Thus, we can represent $\eta$ as
\begin{align*}
& \eta(\lambda)
= \sum_{n=1}^{N_1} \abs{I_n}\chi_{I_n}(\lambda)
+\eta_1(\lambda),  \\
&  \eta_1(\lambda)\geq0,  \quad
\int_{\bbR} \eta_1(\lambda) \, d\lambda \leq
\tfrac12 \int_{\bbR} \eta(\lambda) \, d\lambda,
\end{align*}
and the sum is taken over a finite set of indices $n\in \{1,\dots,N_1\}$.
Iterating this procedure, we see that for any $m\in\bbN$ we can represent
$\eta$ as
\begin{align*}
& \eta(\lambda)= \sum_{n=1}^{N_m} \abs{I_n}\chi_{I_n}(\lambda)
+\eta_m(\lambda),  \\
& \eta_m(\lambda)\geq0,
\quad
\int_{\bbR} \eta_m(\lambda) \, d\lambda
\leq
2^{-m}
\int_{\bbR} \eta(\lambda) \, d\lambda,
\end{align*}
where $\varepsilon_n>0$, $a_n\in\bbR$, and the sum is taken over a finite set of indices $n$.
Taking $m\to\infty$, it follows that $\eta$ can be represented as an
$L^1(\bbR;d\lambda)$-convergent series \eqref{a3}.
\end{proof}

Regarding the class $\eta(\calI_1)$, we note only that every function of this class is
of bounded variation. This follows from \eqref{2.12}, since both terms on the right-hand
side of \eqref{2.12} are of bounded variation. We also note that it follows from the
proof of Theorem \ref{t6.1} that the class $\eta(\calI_1)$ contains all functions of the type
$$
\eta(\lambda)=\sum_{n=1}^\infty
\abs{I_n}\chi_{I_n}(\lambda),
\quad \sum_{n=1}^\infty \abs{I_n}<\infty.
$$

\section{Modified Determinants and the KoSSF}  \lb{new-s7}

In this section, as a preliminary to the next, we want to use our viewpoint to prove a
formula for modified perturbation determinants in terms of the KoSSF originally derived
by Koplienko \cite{Kop}. We recall that one of Krein's motivating formulas for the
KrSSF is (see \eqref{A.31}):
\begin{equation}\lb{new7.1}
\det((A-z)(B-z)^{-1}) = \exp\biggl( \int_\bbR (\lambda-z)^{-1} \xi (\lambda)\,
d\lambda\biggr), \quad z\in\bbC\backslash\bbR.
\end{equation}
Here $\det(\cdot)$ is the Fredholm determinant defined on $I+\calI_1$ (since
$A-B=X\in\calI_1$ implies $(A-z)(B-z)^{-1}-I =X(B-z)^{-1}\in\calI_1$); see \cite{GK,STI}.

We recall that for $C\in\calI_1$, one can define $\det_2(\cdot)$ by
\begin{equation}\lb{new7.2}
\det_2 (I+C)=\det (I+C) e^{-\tr(C)}
\end{equation}
and that $C\mapsto\det_2(I+C)$ extends uniquely and continuously to $\calI_2$, the Hilbert--Schmidt
operators, although the right-hand side of \eqref{new7.2} no longer makes sense (see
\cite[Ch.\ IV]{GK}, \cite[Ch.\ 9]{STI}). Our goal in this section is to prove the
following formula first derived by Koplienko \cite{Kop}:

\begin{theorem}\lb{newT7.1} Let $B$ and $X$ be bounded self-adjoint operators and
$X\in\calI_2$. Let $A=B+X$. Then for any $z\in\bbC\backslash\bbR$,
$(A-z)(B-z)^{-1}\in I +\calI_2$ and
\begin{equation}\lb{new7.3}
\det_2 ((A-z)(B-z)^{-1}) = \exp\biggl( -\int_{\bbR}
\f{\eta(\lambda;A,B)}{(\lambda-z)^2}\, d\lambda\biggr).
\end{equation}
\end{theorem}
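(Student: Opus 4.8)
The plan is to prove \eqref{new7.3} first in the trace class case $X\in\calI_1$, where everything needed is already in place via Krein's formula, and then to pass to the general Hilbert--Schmidt case by a continuity argument. Throughout I write $C=C(z)=X(B-z)^{-1}$, so that $(A-z)(B-z)^{-1}=I+C$; since $X\in\calI_2$ and $(B-z)^{-1}$ is bounded we have $C\in\calI_2$, which already gives the membership $(A-z)(B-z)^{-1}\in I+\calI_2$. Moreover $A$ is self-adjoint and $z\in\bbC\backslash\bbR$, so $A-z$ is boundedly invertible; hence $I+C=(A-z)(B-z)^{-1}$ is invertible and $\det_2(I+C)\neq0$. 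Consequently all logarithms below can be read off the exponential form, and no branch ambiguity arises.

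Suppose first $X\in\calI_1$, so $C\in\calI_1$ and \eqref{new7.2} applies, $\det_2(I+C)=\det(I+C)\,e^{-\tr(C)}$. For the first factor I would insert Krein's formula \eqref{new7.1}, and for the second I would use Lemma~\ref{L2.3} with the bounded Borel function $f(\lambda)=(\lambda-z)^{-1}$, which by \eqref{2.6} gives $\tr(C)=\tr(X(B-z)^{-1})=\int_\bbR(\lambda-z)^{-1}\,d\mu_{B,X}(\lambda)$. Combining,
\[
\det_2((A-z)(B-z)^{-1})=\exp\biggl(\int_\bbR(\lambda-z)^{-1}\xi(\lambda;A,B)\,d\lambda-\int_\bbR(\lambda-z)^{-1}\,d\mu_{B,X}(\lambda)\biggr).
\]
The remaining step is an integration by parts in each term of the exponent. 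Writing $\Xi(\lambda)=\int_{-\infty}^\lambda\xi(\lambda';A,B)\,d\lambda'$ and $M(\lambda)=\mu_{B,X}((-\infty,\lambda))$, both functions are bounded, vanish near $-\infty$, and are constant (equal to $\tr(X)$, by the Remark after Theorem~\ref{T2.4}) near $+\infty$, while $(\lambda-z)^{-1}\to0$; hence the boundary terms vanish and each integral against $(\lambda-z)^{-1}$ turns into $\int_\bbR(\lambda-z)^{-2}\Xi(\lambda)\,d\lambda$, respectively $\int_\bbR(\lambda-z)^{-2}M(\lambda)\,d\lambda$. Subtracting and recalling the definition \eqref{2.12}, namely $\eta=M-\Xi$, collapses the exponent to $-\int_\bbR(\lambda-z)^{-2}\eta(\lambda;A,B)\,d\lambda$, which is \eqref{new7.3} in the trace class case.

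For the general case $X\in\calI_2$ I would approximate. Let $X_n$ be the finite rank truncations of $X$ in its spectral representation, so that $X_n\in\calI_1$ is self-adjoint, $\|X_n\|\le\|X\|$, and $\|X_n-X\|_{\calI_2}\to0$; put $A_n=B+X_n$. On the left, $X_n(B-z)^{-1}\to X(B-z)^{-1}$ in $\calI_2$, and since $\det_2(\dott)$ is continuous on $\calI_2$ (as recalled after \eqref{new7.2}), $\det_2((A_n-z)(B-z)^{-1})\to\det_2((A-z)(B-z)^{-1})$. On the right, Theorem~\ref{T5.3} gives $\eta(\dott;A_n,B)\to\eta(\dott;A,B)$ in $L^1(\bbR)$, with all these functions supported in the fixed compact interval $(-(\|B\|+\|X\|),\|B\|+\|X\|)$; as $(\lambda-z)^{-2}$ is bounded there, $\int_\bbR(\lambda-z)^{-2}\eta(\lambda;A_n,B)\,d\lambda\to\int_\bbR(\lambda-z)^{-2}\eta(\lambda;A,B)\,d\lambda$. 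Passing to the limit in the trace class identity then yields \eqref{new7.3}.

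The routine parts are the integration by parts and the membership statements; the step requiring the most care is the limiting argument, namely simultaneously invoking the $\calI_2$-continuity of $\det_2$ and the $L^1$-convergence (with uniformly bounded supports) of the KoSSF from Theorem~\ref{T5.3}. I would regard this coordination of the two convergences as the main point, since it is exactly where the Hilbert--Schmidt theory genuinely goes beyond the trace class case. An alternative that avoids the approximation entirely is to compare logarithmic $z$-derivatives of the two sides directly, using the trace formula \eqref{5.7} for $g(\lambda)=(\lambda-z)^{-1}$, the standard identity $\tfrac{d}{dz}\log\det_2(I+C(z))=\tr(((I+C(z))^{-1}-I)C'(z))$, and the resolvent identity $(A-z)^{-1}X(B-z)^{-1}=(B-z)^{-1}X(A-z)^{-1}$ to match the two traces, and then fixing the integration constant by letting $\Ima z\to\infty$, where both sides tend to $1$.
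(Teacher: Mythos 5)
Your proof is correct and takes essentially the same route as the paper's: establish the trace class case via $\det_2(I+C)=\det(I+C)e^{-\tr(C)}$, Krein's formula \eqref{new7.1}, the trace representation \eqref{2.6}, integration by parts, and the definition \eqref{2.12} of $\eta$, then pass to general $X\in\calI_2$ by density. Your explicit approximation argument is just the spelled-out version of the paper's one-line reduction, which invokes $\calI_2$-continuity of $\det_2$ for the left-hand side and the $L^1$-bound of Theorem~\ref{T5.2} (equivalently \eqref{5.9} of Theorem~\ref{T5.3}) for the right-hand side.
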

\begin{proof} It suffices to prove \eqref{new7.3} for $X\in\calI_1$ since both sides are
continuous in $\calI_2$ norm and $\calI_1$ is dense in $\calI_2$. Continuity of the left-hand
side follows from Theorem~9.2(c) of \cite{STI} and of the right-hand side by Theorem~\ref{T5.2}
above.

When $X\in\calI_1$, we can use \eqref{new7.2}. Let
\begin{equation}\lb{new7.4}
g_1(\lambda) =\mu_{B,X}((-\infty, \lambda)), \quad
g_2(\lambda) = \int_{-\infty}^\lambda \xi(\lambda'; A,B)\, d\lambda'.
\end{equation}

By an integration by parts argument (using $g'_2=\xi$),
\begin{equation}\lb{new7.5}
\int_{\bbR} \f{\xi(\lambda)}{\lambda-z}\, d\lambda
= \int_{\bbR} \f{g_2(\lambda)}{(\lambda-z)^2}\, d\lambda.
\end{equation}
By an integration by parts in a Stieltjes integral and by \eqref{2.6},
\begin{align}
\tr (X(B -z)^{-1}) &= \int_{\bbR} \f{1}{\lambda-z}\, d\mu_{B,X}(\lambda) \lb{new7.6} \\
&= \int_{\bbR} g_1(\lambda)\, \f{1}{(\lambda-z)^2}\, d\lambda.     \lb{new7.7}
\end{align}

Thus, by \eqref{new7.1} and \eqref{new7.2},
\[
\det_2 (1+X(B-z)^{-1})
=\exp\biggl( -\int_{\bbR} (g_1(\lambda) -g_2(\lambda))(\lambda-z)^{-2}\, d\lambda\biggr),
\]
which, given \eqref{2.12} is \eqref{new7.3}.
\end{proof}

\section{On Boundary Values of Modified Perturbation Determinants
$\det_2((A-z)(B-z)^{-1})$}  \lb{s6}

By \eqref{new7.1}, if $(A-B)\in\calI_1$, $\det((A-z)(B-z)^{-1})$ has a limit as
$z\to \lambda+i0$ for a.e.\ $\lambda \in\bbR$ since $\int_{\bbR} (\nu-z)^{-1} 
\xi(\nu)\, d\nu$ is a difference of Herglotz functions. In this section, we will 
consider nontangential boundary values to the real axis of modified perturbation 
determinants
$$
{\det}_2((A-z)(B-z)^{-1}), \quad z\in\bbC_+,
$$
where $X=(A-B)\in\calI_2$. Unlike the trace class, we will see nontangential
boundary values may not exist a.e.\ on $\bbR$.

For notational simplicity in the remainder of this section, we now abbreviate KoSSF simply
by $\eta$, that is, $\eta \equiv \eta(\dott;A,B)$.

In contrast to the usual (trace class) SSF theory, we have the following nonexistence result
for boundary values of modified perturbation determinants:

\begin{theorem}\label{t6.2}
There exists a pair of  self-adjoint operators $A, B$ $($in a complex, separable Hilbert
space$)$ such that $X=(A-B)\in\calI_2$, $\sigma(B)$ is an interval, and for
a.e.\ $\lambda\in\sigma(B)$, the nontangential limit $\lim_{z\to\lambda, \, z\in\bbC_+}
\det_2 (I+X(B-zI)^{-1})$ does not exist.
\end{theorem}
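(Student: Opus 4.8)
The plan is to construct an explicit operator pair by exploiting the key formula \eqref{new7.3} from Theorem~\ref{newT7.1}, which tells us that
\[
{\det}_2((A-z)(B-z)^{-1}) = \exp\biggl(-\int_{\bbR} \f{\eta(\lambda)}{(\lambda-z)^2}\, d\lambda\biggr).
\]
Taking logarithms, the question of existence of nontangential boundary values of $\det_2$ reduces to understanding the boundary behavior of the function $F(z) = \int_{\bbR} \eta(\lambda)(\lambda-z)^{-2}\, d\lambda$ for $z\in\bbC_+$. Since $\eta\geq 0$ is in $L^1$ and can be prescribed as any nonnegative Riemann integrable function of compact support (by Theorem~\ref{t6.1}), and even more flexibly as any $L^1$-convergent series of the form \eqref{a3}, my strategy is to choose $\eta$ cleverly so that $F(z)$ fails to have nontangential limits almost everywhere, and then transfer this failure back to $\det_2$.

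First I would observe that $F(z)$ is, up to a sign and a derivative, the Cauchy/Borel transform of the measure $\eta(\lambda)\,d\lambda$: indeed $F(z) = \f{d}{dz}\int_{\bbR}\eta(\lambda)(\lambda-z)^{-1}\,d\lambda = G'(z)$ where $G$ is a Herglotz-type function. The function $G(z)=\int \eta(\lambda)(\lambda-z)^{-1}\,d\lambda$ always has nontangential boundary values a.e.\ (it is Herglotz since $\eta\geq0$), so the subtlety is entirely in the derivative $F=G'$: differentiation can destroy boundary regularity. The plan is therefore to engineer $\eta$ so that $G'$ blows up or oscillates nontangentially on a set of full measure in $\sigma(B)$. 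A natural device is to take $\eta$ to have very wild local behavior—for instance built from the series \eqref{a3} with intervals $I_n$ accumulating densely so that $\eta$, while integrable, is extremely rough (e.g.\ unbounded on a dense set, or with a derivative that is badly behaved). Concretely I would try to arrange $\Ima F(\lambda+i0)$, which morally measures $\eta'$, to be $+\infty$ or nonexistent a.e.

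The main obstacle I expect is the transfer step: even if $F(z)\to\infty$ or oscillates along vertical approach, I must rule out that the exponentiation in $\det_2 = e^{-F}$ accidentally tames the behavior, and I must control \emph{nontangential} (not merely radial) limits on a set of \emph{full} measure. To handle this I would rely on a classical fact from the theory of boundary values of analytic functions in $\bbC_+$: if an analytic function has a finite nontangential limit on a set of positive measure, that limit is constrained by Privalov/Lusin--Privalov-type uniqueness theorems, and $G'$ inheriting nontangential limits a.e.\ would force strong regularity on $G$ (essentially that $\eta$ itself is well-behaved, e.g.\ of bounded variation, contradicting our rough choice). So the concrete plan is: (i) pick $\eta$ as an $L^1$-series \eqref{a3} whose corresponding distribution function has unbounded variation on every subinterval of some interval $J\subset\sigma(B)$; (ii) show via the Herglotz representation that $G'=F$ then fails to have finite nontangential limits a.e.\ on $J$; and (iii) conclude from \eqref{new7.3} that $\det_2(I+X(B-z)^{-1}) = e^{-F(z)}$ cannot have nontangential limits a.e., choosing $B = \oplus \diag(a_n-\varepsilon_n, a_n+\varepsilon_n)$ as in the proof of Theorem~\ref{t6.1} so that $\sigma(B)$ fills out an interval. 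The delicate quantitative estimates tying roughness of $\eta$ to nonexistence of boundary values of $G'$ are where the real work lies, and I would expect to invoke a sharp result on Cauchy transforms (or a direct second-moment computation showing $\int_{\bbR}\eta(\lambda)\,y^{-1}\,((\lambda-x)^2+y^2)^{-1}\,d\lambda$ diverges as $y\downarrow0$ for a.e.\ $x$) to seal the argument.
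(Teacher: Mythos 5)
Your overall scheme coincides with the paper's: use Theorem~\ref{newT7.1} to write ${\det}_2((A-z)(B-z)^{-1})=e^{-F(z)}$ with $F(z)=\int_{\bbR}\eta(\lambda)(\lambda-z)^{-2}\,d\lambda$, use Theorem~\ref{t6.1} (or directly the class of functions \eqref{a3}) to realize a prescribed $\eta$ as a KoSSF with $\sigma(B)$ an interval, and then exhibit an $\eta$ for which $F$ has no nontangential limits a.e. Your treatment of the exponentiation is also fine in principle: $e^{-F}$ is analytic and zero-free, so the Lusin--Privalov uniqueness theorem excludes a nontangential limit $0$ on a set of positive measure, while a nonzero nontangential limit of $e^{-F}$ forces a finite nontangential limit of $F$ (connectedness of the Stolz angle pins down the branch of the logarithm); flagging this point, which the paper passes over silently, is to your credit. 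The genuine gap is your step (ii): you assert, but do not prove, that making $\eta$ ``rough'' (unbounded variation on every subinterval of $J$) forces $F=G'$ to have no finite nontangential limits a.e.\ on $J$. There is no such theorem, and the justification you offer is circular: Privalov-type uniqueness theorems concern the boundary value $0$, and they do not convert ``$G'$ has nontangential limits a.e.'' into regularity (e.g.\ bounded variation) of $\eta$ --- that alleged implication is exactly what would need to be proved. What is true is the opposite direction: if $\eta$ has bounded variation and compact support, integration by parts gives $F(z)=\int_{\bbR}(\lambda-z)^{-1}\,d\eta(\lambda)$, a Cauchy transform of a finite signed measure, hence a difference of Herglotz-type functions, so $F$ \emph{does} have finite nontangential limits a.e.; thus failure of local bounded variation is necessary for your example, but nothing says it is sufficient.

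Your fallback, the ``direct second-moment computation,'' is vacuous: the quantity $\int_{\bbR}\eta(\lambda)\,y^{-1}((\lambda-x)^2+y^2)^{-1}\,d\lambda$ equals $\pi y^{-2}$ times the Poisson average of $\eta$ at $x+iy$, so it diverges like $\pi\eta(x)/y^{2}$ at every Lebesgue point with $\eta(x)>0$ --- for \emph{every} integrable $\eta\geq0$, including $\eta=\chi_{(0,1)}$, for which $F(z)=(z-1)^{-1}-z^{-1}$ has finite nontangential limits at every point of $(0,1)$. The kernel $(\lambda-z)^{-2}$ is not positive; the entire difficulty of the theorem lives in its cancellations, which no positive-kernel divergence estimate can detect. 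This missing ingredient is precisely what the paper imports ready-made from the Bloch--Nevanlinna circle of ideas: a power series $f(z)=\sum_{n}c_nz^n$ with $\sum_n\abs{c_n}<\infty$, built from lacunary series or Rademacher functions (after Duren and Zygmund), whose derivative $f'$ has no nontangential limits a.e.\ on $\partial\bbD$. The paper then takes $\eta$ to be a cut-off, vertically shifted copy of the boundary function $\Ima(f)$ --- so that, up to terms handled by localization, $F$ \emph{is} $f'$ --- and transplants everything to the real line by a M\"obius map. Without that known (and genuinely nontrivial) theorem, or a substitute for it, your construction does not get off the ground; with it, your outline essentially becomes the paper's proof.
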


\begin{proof} By Theorems~\ref{t6.1} and \ref{newT7.1}, the proof reduces to the following 
statement: There exists a Riemann integrable $0\leq \eta\in L^1(\bbR;d\lambda)$ with 
support being an interval such that for a.e.\ $\lambda\in \supp\, (\eta)$, 
the nontangential limit
\begin{equation}
\lim_{\substack{z\to\lambda\\ z\in\bbC_+}}\int_{\bbR} \frac{\eta(\lambda)\,
d\lambda}{(\lambda-z)^2}
\label{6.5}
\end{equation}
does not exist.

First we note that the existence of the limit in \eqref{6.5} at the point $\lambda$
depends only on the behavior of $\eta(t)$ when $t$ varies in a small neighborhood
of $\lambda$. Thus, it suffices to construct $0\leq \eta\in L^1(\bbR;d\lambda)$ such that
the limits \eqref{6.5} do not exist for a.e.\ $\lambda\in(-1,1)$; by shifting and
scaling such a function $\eta$, one obtains the required statement for a.e.\
$\lambda\in\sigma(B)$.

Let us first obtain the required example of $\eta$ defined on the unit circle $\partial\bbD$,
and then transplant it onto the real line. By a well-known construction employing
either lacunary series or Rademacher functions (see \cite{Duren}, \cite[App.~A]{Du70},
\cite[I, p.~6]{Zy}), there exists a power series $f(z)=\sum_{n=1}^{\infty} c_n z^n$,
$\abs{z}\leq1$, such that $\sum_{n=1}^{\infty} \abs{c_n}<\infty$ and for a.e.\ $z\in\partial\bbD$,
the limit $\lim_{\zeta\to z}f'(\zeta)$ does not exist as $\zeta$ approaches $z$ from
inside of the unit disc along any nontangential trajectory. By construction, $\Ima (f)$
is continuous on $\partial\bbD$ and
$$
f(z)=\frac1\pi \int_{\partial\bbD} \frac{\Ima (f(\zeta))}{(\zeta-z)}\, d\zeta,
\quad
f'(z)=\frac1\pi \int_{\partial\bbD} \frac{\Ima (f(\zeta))}{(\zeta-z)^2}\, d\zeta,
\quad
\abs{z}<1.
$$
Let $a>-\min_{\zeta\in\partial\bbD} \Ima (f(\zeta))$ and set $v(\zeta)=\Ima (f(\zeta))+a$ if
$\abs{\arg \zeta}<\pi/2$ and $v(\zeta)=0$ otherwise. Then $v\geq 0$ and $v$ is
piecewise continuous (with the possible discontinuities only for $\arg(\zeta)=
\pm\pi/2$); in particular, $v$ is Riemann integrable. Again by a localization argument,
for a.e.\ $\theta\in(-\pi/2,\pi/2)$, the limit
$$
\lim_{z\to e^{i\theta}} \int_{\partial\bbD} \frac{\Ima (f(\zeta))}{(\zeta-z)^2}\, d\zeta
$$
does not exist as $z$ approaches $e^{i\theta}$ from inside of the unit disc along
any nontangential trajectory.

It remains to transplant $v$ from the unit circle onto the real line. Let
$t=i\frac{1-\zeta}{1+\zeta}$, $w=i\frac{1-z}{1+z}$, and $\eta(t)=v(\zeta(t))$.
Then $0\leq \eta\in L^1(\bbR;d\lambda)$, $\supp \, (\eta)\subset(-1,1)$, $\eta$ is Riemann
integrable, and
$$
 \int_{\partial\bbD} \frac{\Ima (f(\zeta))}{(\zeta-z)^2}\, d\zeta
 =
 -\frac{(w+i)^2}{2i}
 \int_{-1}^{1}\frac{\eta(\lambda) \, d\lambda}{(\lambda-w)^2}.
 $$
Thus, the limit \eqref{6.5} does not exist for a.e.\ $\lambda\in(-1,1)$.
 \end{proof}

\section{KoSSF for Unbounded Operators }\lb{s9}

In this section we briefly discuss the question of existence of
KoSSF under the assumption
\begin{equation}\label{9.1}
[(A-z)^{-1} - (B-z)^{-1}] \in\calI_2
\end{equation}
instead of $(A-B)\in\calI_2$. This question was studied in \cite{Ne88} 
and \cite{Pe05} (see also \cite{Kop} for related issues).

First recall the invariance principle for the KrSSF. Assume that $A,B$ are bounded
self-adjoint operators and $(A-B)\in\calI_1$. Let $\varphi=\overline{\varphi}\in 
C^\infty(\bbR)$, $\varphi'\neq 0$ on $\bbR$. Then we have
\begin{equation}
\xi(\lambda;A,B)
=
\text{sign}\, (\varphi') \, \xi(\varphi(\lambda); \varphi(A),\varphi(B))
+ \text{const} \, \text{ for a.e.\ $\lambda\in\bbR$}.
\label{9.2}
\end{equation}
This is a consequence of Krein's trace formula \eqref{1.5}.
With an appropriate choice of normalization of KrSSF, the constant in the right-hand
side of \eqref{9.2} vanishes.

When both $(A-B)\in\calI_1$ and $[\varphi(A)-\varphi(B)]\in\calI_1$, formula \eqref{9.2} is
an easily verifiable \emph{identity}. But when $[\varphi(A)-\varphi(B)]\in\calI_1$ yet
$(A-B)\not\in\calI_1$, this formula can be regarded as a \emph{definition} of
$\xi(\dott;A,B)$.

In contrast to this, no explicit formula relating $\eta(\varphi(\cdot);\varphi(A), 
\varphi(B))$ to $\eta(\dott; A,B)$ is known. The reason is simple: The definition of 
$\eta$ involves not only a trace formula but a choice of interpolation $A(\theta)$ 
between $B$ and $A$. For bounded self-adjoint operators, the choice $A(\theta)= 
(1-\theta)B+\theta A$, $\theta\in [0,1]$, is natural. But when one only has \eqref{9.1}, 
what choice does one make? It is natural to define $A(\theta)$ by
\begin{equation}\lb{9.2a}
(A(\theta)-z)^{-1} =(1-\theta)(B-z)^{-1} + \theta(A-z)^{-1}, \quad \theta\in [0,1].
\end{equation}
For this to be self-adjoint, we need $z\in\bbR$, which means we should have some
real point in the intersection of the resolvent sets for $A$ and $B$. Even if there were
such a $z$, it is not unique and the interpolation will not be unique. Moreover, the
convexity that led to $\eta\geq 0$ may be lost. The net result is that the situation,
both after the work of others and our work, is less than totally satisfactory.

Let us discuss a certain surrogate of \eqref{9.2} for the KoSSF.
The formulas below are a slight variation on  the theme of the construction of \cite{Ne88}.

First assume that $A$ and $B$ are bounded operators and $X=(A-B) \in\calI_2$.
Let $\delta\subset\bbR$ be an interval which contains the spectra of $A$ and $B$ and
$\varphi\in C^\infty(\delta)$, $\varphi'\not=0$. Denote $a=\varphi(A)$, $b=\varphi(B)$,
$x=a-b$. By the Birman--Solomyak bound \eqref{1.6}, we have $x\in\calI_2$ and so
both $\eta(\dott; A,B)$ and $\eta(\dott; a,b)$ are well defined. Let us display the
corresponding trace formulas:
\begin{align}
\tr\bigg(f(A)-f(B)-\frac{d}{d\alpha}f(B+\alpha X)\bigg|_{\alpha=0}\bigg)
&=
\int_{\bbR} \eta(\lambda; A, B)f''(\lambda)\, d\lambda,
\label{9.3}
\\
\tr\bigg(g(a)-g(b)-\frac{d}{d\alpha}g(b+\alpha x)\bigg|_{\alpha=0}\bigg)
&=
\int_{\bbR} \eta(\mu;a,b)g''(\mu)\, d\mu.
\label{9.4}
\end{align}
Now suppose $f=g\circ\varphi$. In contrast to the corresponding calculation
for the KrSSF, the left-hand sides of \eqref{9.3} and \eqref{9.4} are, in general,
distinct. However, we can make the right-hand sides look similar if we introduce
the following modified KoSSF:
\begin{equation}
\widetilde \eta(\lambda; A,B) =
\eta(\varphi(\lambda); a,b)\, \frac{1}{\varphi'(\lambda)}
- \int_{\lambda_0}^\lambda \eta(\varphi(t); a,b)\left(\frac{1}{\varphi'(t)}\right)'dt.
\label{9.5}
\end{equation}
The choice of $\lambda_0$ above is arbitrary; it affects only the constant term in the
definition of $\widetilde\eta$.

By a simple calculation involving integration by parts, we get
\begin{equation}
\int_{\bbR} \eta(\mu; a,b)g''(\mu)\, d\mu
=
\int_{\bbR} \widetilde \eta(\lambda; A,B)f''(\lambda)\, d\lambda,
\quad f=g\circ \varphi\in C_0^\infty(\bbR).
\label{9.6}
\end{equation}
Combining \eqref{9.4} and \eqref{9.6}, we get the modified trace formula
\begin{equation}
\tr\bigg(f(A)-f(B)-\frac{d}{d\alpha}f\circ \varphi^{-1}(b+\alpha x)\bigg|_{\alpha=0}\bigg)
=
\int_{\bbR} \widetilde\eta(\lambda; A, B)f''(\lambda)\, d\lambda
\label{9.7}
\end{equation}
for all $f\in C_0^\infty(\bbR)$. Precisely as for the KrSSF, one can treat
\eqref{9.5} and \eqref{9.7} as the definition of a modified KoSSF $\widetilde \eta(\dott;A,B)$.

We consider an example of this construction which might be useful in applications.
Suppose that $A$ and $B$ are lower semibounded self-adjoint operators such that
for some (and thus for all) $z\in\bbC\backslash(\sigma(A)\cup\sigma(B))$
the inclusion \eqref{9.1} holds. Choose $E\in\bbR$ such that $\inf\sigma(A+E)>0$
and $\inf\sigma(B+E)>0$. Take $\varphi(\lambda)=\frac{1}{\lambda+E}$ and let
$a=(A+E)^{-1}$, $b=(B+E)^{-1}$, $x=a-b$. For $\lambda>-E$, define
\begin{align}
\begin{split}
\widetilde\eta(\lambda; A,B)
& = -\eta((\lambda+E)^{-1}; a,b)(\lambda+E)^2  \\
& \quad + 2\int_{-E}^\lambda \eta((t+E)^{-1};a,b)(t+E)\, dt.   \lb{9.7b}
\end{split}
\end{align}
Note that $\eta(\dott;a,b)$ is integrable and $\eta(\lambda;a,b)$ vanishes for large
$\lambda$ and therefore
the integral in \eqref{9.7b} converges. Moreover, this definition ensures that
$\widetilde\eta(\lambda;A,B)=0$ for $\lambda<\inf(\sigma(A)\cup\sigma(B))$.
Thus, it is natural to define
\begin{equation}\lb{9.7a}
\widetilde\eta(\lambda; A,B)=0 \, \text{ for } \, \lambda\leq -E.
\end{equation}
The above calculations prove the following result:
\begin{theorem}\label{t9.1}
Let $A$, $B$,  $a$, $b$, $x$ be as above.
Then there exists a function $\widetilde \eta(\dott; A,B)$ such that
\begin{equation}
\int_{\bbR} \widetilde\eta(\lambda;A,B)(\lambda+E)^{-4}d\lambda<\infty
\label{9.8}
\end{equation}
and $\widetilde \eta(\lambda;A,B)=0$ for $\lambda<\inf(\sigma(A)\cup\sigma(B))$
and for all $f\in C_0^\infty(\bbR)$ the following trace formula holds:
\begin{equation}
\tr\bigg(f(A)-f(B)-\frac{d}{d\alpha}f((b+\alpha x)^{-1}-E)\bigg|_{\alpha=0}\bigg)
=
\int_{\bbR} \widetilde \eta(\lambda; A,B)f''(\lambda)\, d\lambda.
\label{9.9}
\end{equation}
\end{theorem}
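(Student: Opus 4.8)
The plan is to reduce everything to the bounded pair $a=(A+E)^{-1}$, $b=(B+E)^{-1}$, for which the full Hilbert--Schmidt theory of Sections~\ref{s2}--\ref{s5} already applies, and then to transport the resulting KoSSF through the smooth change of variables $\varphi(\lambda)=(\lambda+E)^{-1}$. First I would note that $x=a-b=[(A+E)^{-1}-(B+E)^{-1}]$ lies in $\calI_2$: this is precisely hypothesis~\eqref{9.1} evaluated at the real point $z=-E$, which belongs to the common resolvent set since $\inf\sigma(A+E)>0$ and $\inf\sigma(B+E)>0$. Consequently Theorem~\ref{T5.3} guarantees that $\eta(\dott;a,b)$ exists, is nonnegative, lies in $L^1(\bbR)$, and is supported in a bounded subset of $(0,\infty)$ with $\eta(\mu;a,b)=0$ for $\mu>1/m$, where $m=\inf(\sigma(A+E)\cup\sigma(B+E))>0$. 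This vanishing is the only decay information I will need.

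Next I would observe that \eqref{9.7b} is exactly the general formula \eqref{9.5} specialized to $\varphi(\lambda)=(\lambda+E)^{-1}$ with $\lambda_0=-E$: one computes $\varphi'(\lambda)=-(\lambda+E)^{-2}$, hence $1/\varphi'(\lambda)=-(\lambda+E)^{2}$ and $(1/\varphi')'(\lambda)=-2(\lambda+E)$, and substitution reproduces the two terms of \eqref{9.7b}. Since $\varphi^{-1}(\mu)=\mu^{-1}-E$, the functional-calculus identity $\varphi^{-1}(a)=a^{-1}-E=A$ (and likewise for $B$) shows that, with $g=f\circ\varphi^{-1}$, one has $g(a)=f(A)$, $g(b)=f(B)$, and $g(b+\alpha x)=f((b+\alpha x)^{-1}-E)$; thus the left-hand side of the bounded trace formula \eqref{9.4} for the pair $(a,b)$ coincides with the left-hand side of \eqref{9.9}, while \eqref{9.6} turns its right-hand side into $\int_{\bbR}\widetilde\eta(\lambda;A,B)f''(\lambda)\,d\lambda$. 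This is \eqref{9.9}. The vanishing of $\widetilde\eta$ below $\inf(\sigma(A)\cup\sigma(B))$ follows because for such $\lambda$ one has $\lambda+E<m$, so the arguments $(\lambda+E)^{-1}$ and $(t+E)^{-1}$ in both terms of \eqref{9.7b} exceed $1/m$, forcing $\eta(\dott;a,b)$ to vanish there; combined with the convention \eqref{9.7a} this gives $\widetilde\eta\equiv0$ on $(-\infty,\inf(\sigma(A)\cup\sigma(B)))$.

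For the integrability bound \eqref{9.8} I would treat the two terms of \eqref{9.7b} separately, reducing each to $\|\eta(\dott;a,b)\|_{L^1}$ by the substitution $\mu=(\lambda+E)^{-1}$. The first term contributes, in absolute value, $\int_{-E}^{\infty}\eta((\lambda+E)^{-1};a,b)(\lambda+E)^{-2}\,d\lambda=\int_0^\infty\eta(\mu;a,b)\,d\mu$, while for the second term (whose prefactor is $2$) Tonelli's theorem applies, since all integrands are nonnegative; exchanging the order of integration and using $\int_t^\infty(\lambda+E)^{-4}\,d\lambda=\tfrac13(t+E)^{-3}$ collapses the double integral to $\tfrac23\int_0^\infty\eta(\mu;a,b)\,d\mu$. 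Both are finite, so $\int_{\bbR}|\widetilde\eta(\lambda;A,B)|(\lambda+E)^{-4}\,d\lambda\le\tfrac53\|\eta(\dott;a,b)\|_{L^1}<\infty$. The weight $(\lambda+E)^{-4}$ is the natural one here because, under $f=g\circ\varphi$, the dominant part of $f''$ as $\lambda\to+\infty$ is $g''((\lambda+E)^{-1})(\lambda+E)^{-4}$.

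The step I expect to require the most care is the rigorous justification of the integration by parts producing \eqref{9.6}, together with the admissibility of $g=f\circ\varphi^{-1}$ as an input to the bounded trace formula \eqref{9.4}. For $f\in C_0^\infty(\bbR)$ the function $g(\mu)=f(\mu^{-1}-E)$ is smooth on $(0,\infty)$ and, because $f$ has compact support, vanishes identically for $\mu$ near $0$ and for $\mu$ large; extending it by zero yields $g\in C_0^\infty(\bbR)$, so Theorem~\ref{T5.3} does apply to the pair $(a,b)$. One must also check that replacing $f$ by a cutoff supported in $(-E,\infty)$ changes neither side of \eqref{9.9}: the left-hand side is unaffected since $\sigma(A)\cup\sigma(B)\subset(-E,\infty)$, and the right-hand side is unaffected since $\widetilde\eta$ vanishes below $\inf(\sigma(A)\cup\sigma(B))$. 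The only genuinely delicate point is the behavior as $\mu\to0^+$, equivalently $\lambda\to+\infty$, where $\sigma(a)$ may accumulate; here the compact support of $g$ kills any boundary contribution in the integration by parts, so no hypothesis beyond $\eta(\dott;a,b)\in L^1$ is needed.
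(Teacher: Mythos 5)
Your proposal follows the paper's own proof essentially step for step: realize $x\in\calI_2$ from \eqref{9.1} at $z=-E$, apply Theorem~\ref{T5.3} to the bounded pair $(a,b)$, recognize \eqref{9.7b} as \eqref{9.5} specialized to $\varphi(\lambda)=(\lambda+E)^{-1}$ with $\lambda_0=-E$, and push the trace formula \eqref{9.4} through the change of variables \eqref{9.6}. Your supporting details (the support argument for the vanishing of $\widetilde\eta$ below $\inf(\sigma(A)\cup\sigma(B))$, the Tonelli computation giving \eqref{9.8} with the bound $\tfrac53\|\eta(\dott;a,b)\|_{L^1}$, and the cutoff reduction to $f$ supported in $(-E,\infty)$, without which $g=f\circ\varphi^{-1}$ need not vanish for large $\mu$) are correct and considerably more explicit than anything the paper writes down.

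The gap sits exactly at the step you deferred, namely the integration by parts behind \eqref{9.6}. That identity is orientation sensitive: when one carries it out, the $\varphi''$ terms cancel between the two pieces of \eqref{9.5}, and what survives is $\int\eta(\varphi(\lambda))\,g''(\varphi(\lambda))\,\varphi'(\lambda)\,d\lambda$, which under the substitution $\mu=\varphi(\lambda)$ equals $\mathrm{sign}(\varphi')\int\eta(\mu)g''(\mu)\,d\mu$. Thus \eqref{9.6} holds as stated only for increasing $\varphi$, whereas $\varphi(\lambda)=(\lambda+E)^{-1}$ is decreasing; so with $\widetilde\eta$ given by \eqref{9.7b}, the chain \eqref{9.4} plus \eqref{9.6} actually yields \eqref{9.9} with a minus sign on the right. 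A $1\times1$ example makes this concrete: take $E=0$, $A=2$, $B=1$, so $a=\tfrac12$, $b=1$, $\eta(\mu;a,b)=(\mu-\tfrac12)\chi_{[1/2,1]}(\mu)$, and \eqref{9.7b} gives $\widetilde\eta(\lambda)=(\lambda-\tfrac32)\chi_{[1,2]}(\lambda)+\tfrac12\chi_{(2,\infty)}(\lambda)$; then the left-hand side of \eqref{9.9} equals $f(2)-f(1)-\tfrac12 f'(1)$, while $\int_\bbR\widetilde\eta(\lambda)f''(\lambda)\,d\lambda=-\bigl(f(2)-f(1)-\tfrac12 f'(1)\bigr)$. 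The theorem itself survives, because it asserts only the existence of some $\widetilde\eta$: take the negative of \eqref{9.7b} (equivalently, insert the factor $\mathrm{sign}(\varphi')$ into \eqref{9.5}), and then \eqref{9.8}, the vanishing below the spectrum, and \eqref{9.9} all hold, since the first two properties are insensitive to the sign. To be fair, the paper's own text contains the same slip, as it too quotes \eqref{9.6} for this decreasing $\varphi$; you have reproduced the intended argument faithfully. But because your write-up cites \eqref{9.6} as a black box at precisely the point you yourself flagged as the delicate one, it inherits the sign error: a complete proof must perform the integration by parts, record the $\mathrm{sign}(\varphi')$ factor, and adjust the definition of $\widetilde\eta$ accordingly.
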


We note that condition \eqref{9.8} does not fix the linear term in the definition
of $\widetilde\eta$ but \eqref{9.7a} does.

In \cite{Ne88}, a pair of self-adjoint operators $A, B$ was considered under the assumption
\eqref{9.1} alone (without the lower semiboundedness assumption).
Another regularization of $\eta(\dott;A,B)$  was suggested in this case.
The construction of \cite{Ne88} is more intricate than the above calculation and
uses KoSSF for unitary operators.

In \cite{Kop}, the assumption
$$
(A-B)|A-iI|^{-1/2}\in\calI_2
$$
was used. This assumption is intermediate between $(A-B)\in\calI_2$ and \eqref{9.1}.
Under this assumption, the trace formula \eqref{5.7} was proven with
$0\leq \eta\in L^1(\bbR;(1+\lambda)^{-\gamma}d\lambda)$ for any $\gamma> \f12$.

Finally, in \cite{Ne88}, the assumption
$$
(A-B)(A-iI)^{-1}\in\calI_2
$$
was used and formula \eqref{5.7} was proven with $\eta\in L^1(\bbR;(1+\lambda^2)^{-2}d\lambda)$.

Note that the difference between the last two results and Theorem~\ref{t9.1} is that
in Theorem~\ref{t9.1}, a modified trace formula \eqref{9.9} is proven rather than the
original formula \eqref{5.7}. Theorem~\ref{t9.1} is nothing but a change of variables
in the trace formula for resolvents, whereas the abovementioned results of \cite{Kop}
and \cite{Ne88} require some work.

\section{The Case of Unitary Operators} \lb{s10}

In this section, we want to briefly discuss a definition of $\eta$ for a pair of unitaries.
Once again, there is an issue of interpolation. If $A$ and $B$ are the unitaries,
\begin{equation}\lb{10.1}
A(\theta)=(1-\theta)B+\theta A, \quad \theta \in [0,1],
\end{equation}
is not unitary, so we cannot define $f(A(\theta))$ for arbitrary $C^\infty$-functions on
$\partial\bbD=\{z\in\bbC \, | \, \abs{z}=1\}$. Neidhardt \cite{Ne88} (see also \cite{Pe05})
discussed one way of interpolating by writing $A=e^C$, $B=e^D$ for suitable $C$ and $D$ and
interpolating, but there is considerable ambiguity in how to choose $C,D$ as well as
whether to look at $e^{\theta C+ (1-\theta)D}$ or $e^{(1-\theta)D} e^{\theta C}$, etc.

Here, with Szeg\H{o}'s theorem as background \cite{GPS}, we want to discuss an alternative
to Neidhardt's approach.

\begin{lemma}\lb{L10.1} Let $A,B$ be unitary with $(A-B)\in\calI_2$. Then for any
$n=0,1,2,\dots$,
\begin{equation}   \lb{10.2}
\bigg(A^n -B^n - \f{d}{d\theta}\, A(\theta)^n\bigg|_{\theta=0}\bigg)\in \calI_1.
\end{equation}
We have
\begin{equation}\lb{10.3}
\biggl\| \left. A^n-B^n -\f{d}{d\theta}\, A(\theta)^n\right|_{\theta=0}\biggr\|_{\calI_1} \leq
\f{n(n-1)}{2}\, \|A-B\|_{\calI_2}^2.
\end{equation}
In fact,
\begin{equation}\lb{10.4}
\text{LHS of \eqref{10.3}} = o(n^2).
\end{equation}
\end{lemma}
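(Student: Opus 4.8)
The plan is to reduce everything to a single algebraic identity obtained by expanding $A^n$ and $B^n$ around the interpolant $A(\theta)=(1-\theta)B+\theta A$. First I would write $A=B+X$ with $X=(A-B)\in\calI_2$, so that $A(\theta)=B+\theta X$. Since $A(\theta)^n$ is a polynomial in $\theta$, the first-order term is simply
\[
\f{d}{d\theta}\, A(\theta)^n\bigg|_{\theta=0} = \sum_{j=0}^{n-1} B^j X B^{n-1-j}.
\]
The key observation is that the quantity in \eqref{10.2} is exactly the \emph{Taylor remainder of order two} of the polynomial $\theta\mapsto A(\theta)^n$ evaluated at $\theta=1$, namely the sum of all monomials $B^{j_0} X B^{j_1} X \cdots$ containing $X$ at least twice. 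Expanding $A^n=(B+X)^n$ by the noncommutative binomial and subtracting $B^n$ and the first-order term, I get the explicit expression
\[
A^n-B^n-\f{d}{d\theta}\,A(\theta)^n\bigg|_{\theta=0}
= \sum_{\substack{0\le i<j\le n-1}} B^{i} X B^{\,j-i-1} X B^{\,n-1-j} + (\text{higher-order terms}),
\]
where each summand contains the factor $X$ at least twice.

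Once this is in hand, \eqref{10.2} is immediate: each term has at least two Hilbert--Schmidt factors $X$, and since $B$ is unitary (hence bounded by $1$ in operator norm, with $B^k$ isometric), the product of two $\calI_2$ factors lies in $\calI_1$ by H\"older's inequality for trace ideals (as in \cite[p.\ 21]{STI}). For the quantitative bound \eqref{10.3}, I would keep only the leading quadratic contribution and estimate $\|B^{i}XB^{j-i-1}XB^{n-1-j}\|_{\calI_1}\le \|X\|_{\calI_2}^2$ using $\|B^k\|\le 1$; the number of pairs $(i,j)$ with $0\le i<j\le n-1$ is $\binom{n}{2}=\f{n(n-1)}{2}$, giving \eqref{10.3} directly. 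The contributions from monomials with three or more copies of $X$ must be shown to be absorbed, which is where the clean combinatorics of the second-order remainder matters: one can organize the expansion so that the full remainder is controlled by the purely quadratic count, or invoke the remainder form of Taylor's theorem for the matrix polynomial (cf.\ the derivation of \eqref{2.3}--\eqref{2.4} in Proposition~\ref{P2.1}) to get the quadratic bound without separately tracking higher terms.

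The refinement \eqref{10.4} is the main obstacle, since the crude count gives exactly the $\f{n(n-1)}{2}\sim n^2/2$ bound rather than $o(n^2)$. The plan here is a density argument: first establish \eqref{10.4} for $X$ of finite rank (or more simply for $A,B$ differing by a rank-one unitary perturbation), where one can compute the $\calI_1$-norm asymptotics directly and exhibit cancellation that the triangle-inequality count misses; then pass to general $X\in\calI_2$ by approximating $X$ by finite-rank operators $X_m$ with $\|X-X_m\|_{\calI_2}\to0$ and controlling the error through \eqref{10.3} applied to the difference, i.e.\ a standard $\veps/3$ splitting. The delicate point is that the bound \eqref{10.3} alone is \emph{not} $o(n^2)$, so the $o(n^2)$ statement genuinely requires extracting cancellation rather than summing absolute values; I expect the cleanest route is to show that for fixed Hilbert--Schmidt $X$ the operators $B^{i}XB^{j-i-1}XB^{n-1-j}$ have $\calI_1$-norms that are summable against the trivial count only when $X$ has a ``spread-out'' spectral structure, and that the $o(n^2)$ gain comes from the decay of $\|XB^kX\|_{\calI_1}$ (equivalently, decay of correlations $\tr(X^*B^{-k}XB^{k}\cdots)$) as $k\to\infty$, which holds because $X\in\calI_2$ forces $B^kX\to 0$ weakly. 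Making this decay quantitative and uniform enough to beat the $\binom{n}{2}$ count is the crux of the argument.
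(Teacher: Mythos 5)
Your treatment of \eqref{10.2}--\eqref{10.3} can be made to work, but only via the second of the two routes you mention: the integral form of the Taylor remainder, $\int_0^1(1-s)\,\tfrac{d^2}{ds^2}A(s)^n\,ds$, where $\tfrac{d^2}{ds^2}A(s)^n=2\sum_{0\le i<j\le n-1}A(s)^iXA(s)^{j-i-1}XA(s)^{n-1-j}$ and $\|A(s)\|\le1$ because $A(s)$ is a convex combination of unitaries; this gives exactly $\tfrac{n(n-1)}{2}\|X\|_{\calI_2}^2$. Your first suggestion---absorbing the monomials with three or more factors of $X$ into the quadratic count---cannot work by absolute values: bounding every word with $k\ge2$ letters $X$ by $\|X\|_{\calI_2}^2\|X\|^{k-2}$ yields $\|X\|_{\calI_2}^2\sum_{k\ge2}\binom{n}{k}\|X\|^{k-2}$, which grows exponentially in $n$. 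The paper avoids this altogether by telescoping to the identity (its \eqref{10.8}) that the LHS of \eqref{10.2} equals $\sum_{j=0}^{n-1}(A^j-B^j)XB^{n-1-j}$, together with $\|A^j-B^j\|_{\calI_2}\le j\|X\|_{\calI_2}$; this gives the same bound, but the identity is \emph{linear} in $X$ for fixed $A,B$, which is what matters next.

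The genuine gap is in \eqref{10.4}. The mechanism you propose---decay of $\|XB^kX\|_{\calI_1}$ because ``$X\in\calI_2$ forces $B^kX\to0$ weakly''---is false in general: take $B=I$ and any unitary $A$ with $(A-I)\in\calI_2$; then $B^kX=X$ for all $k$ and nothing decays, yet \eqref{10.4} still holds. Your density step is also flawed as stated: replacing $X$ by a finite-rank $X_m$ replaces $A$ by $B+X_m$, which is no longer unitary and may have norm exceeding $1$, so its powers are not uniformly bounded and the bound \eqref{10.3} (whose proof needs $\|A(\theta)\|\le1$) is unavailable for the perturbed pair; moreover the finite-rank base case is only asserted (``exhibit cancellation''), not proved. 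In fact no cancellation is needed. Keep $A,B$ fixed, use the linear identity above, and split $X=X_\veps^{(1)}+X_\veps^{(2)}$ with $X_\veps^{(1)}\in\calI_1$ and $\|X_\veps^{(2)}\|_{\calI_2}\le\veps$ \emph{inside that identity}. For the $X_\veps^{(2)}$ part, H\"older with the pairing $(2,2)$ gives at most $\veps\,\|X\|_{\calI_2}\tfrac{n(n-1)}{2}$; for the $X_\veps^{(1)}$ part, H\"older with the pairing $(\infty,1)$ and the uniform bound $\|A^j-B^j\|\le2$ give at most $2n\|X_\veps^{(1)}\|_{\calI_1}=O(n)$. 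Dividing by $n^2$, letting $n\to\infty$, and then $\veps\downarrow0$ yields \eqref{10.4}. The $o(n^2)$ gain thus comes purely from switching the H\"older pairing on the trace-class piece---an elementary trace-ideal trick, not an operator cancellation---and this is the idea your proposal is missing.
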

\begin{proof} Let $X=A-B$. Then, by telescoping,
\begin{equation}\lb{10.5}
A(\theta)^n -B^n =\sum_{j=0}^{n-1} A(\theta)^j (\theta X) B^{n-1-j}.
\end{equation}
Thus, since $\|A(\theta)\|\leq 1$, $\|B\|=1$,
\begin{equation}\lb{10.6}
\|A(\theta)^n -B^n\|_{\calI_2} \leq n\abs{\theta}\, \|X\|_{\calI_2}
\end{equation}
and, of course,
\begin{equation}\lb{10.7}
\|A(\theta)^n-B^n\|\leq 2.
\end{equation}

Dividing \eqref{10.5} by $\theta$ and taking $\theta$ to zero yields
\[
\f{d}{d\theta}\, A(\theta)^n =\sum_{j=0}^{n-1} B^j\, X\, B^{n-1-j},
\]
so
\begin{equation}\lb{10.8}
\text{LHS of \eqref{10.2}} =\sum_{j=0}^{n-1}\, (A^j-B^j)X\, B^{n-1-j}.
\end{equation}

\eqref{10.3} is immediate since \eqref{10.8} and \eqref{10.6} implies
\begin{equation}\lb{10.9}
\text{LHS of \eqref{10.3}} \leq \|X\|_2^2 \biggl(\, \sum_{j=0}^{n-1} j\biggr).
\end{equation}

To get \eqref{10.4}, we write $X=X_\veps^{(1)} + X_\veps^{(2)}$ where
$\|X_\veps^{(2)}\|_{\calI_2}
\leq\veps$ and $\|X_\veps^{(1)}\|_{\calI_1} <\infty$. Thus, \eqref{10.8} implies
\[
\text{LHS of \eqref{10.3}} \leq \veps\|X\|_{\calI_1}\, \f{n(n-1)}{2} + 2n\|X_\veps^{(1)}\|_{\calI_1}
\]
using \eqref{10.7} instead of \eqref{10.6}. Dividing by $n^2$, taking $n\to\infty$, and
then $\veps\downarrow 0$, show
\[
\limsup_{n\to\infty} n^{-2} \text{ LHS of \eqref{10.3}} =0.
\qedhere
\]
\end{proof}

\begin{theorem}\lb{T10.2} Let $A$ and $B$ be unitary so $(A-B)\in\calI_2$. Then there
exists a real distribution $\eta(\lambda;A,B)$ on $\partial\bbD$ so that for any
polynomial $P(z)$, $\big[P(A)-P(B)-\f{d}{d\theta} P(A(\theta))\big]\big|_{\theta=0}\in
\calI_2$ and
\begin{equation}\lb{10.10}
 \tr\biggl( \left.P(A)-P(B) -\f{d}{d\theta}\, P(A(\theta))\right|_{\theta=0}\biggr) =
\int_0^{2\pi} P''(e^{i\theta}) \eta (e^{i\theta};A,B)\, \f{d\theta}{2\pi}.
\end{equation}
Moreover, the moments of $\eta$ satisfy
\begin{equation}\lb{10.11}
\int_0^{2\pi} e^{in\theta} \eta(e^{i\theta})\, \f{d\theta}{2\pi} \underset{\abs{n}\to\infty}{=} o(1).
\end{equation}
\end{theorem}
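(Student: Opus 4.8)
The plan is to reduce everything to monomials and to read the Fourier coefficients of $\eta$ directly off the traces supplied by Lemma~\ref{L10.1}. By linearity it suffices to treat $P(z)=z^n$, and for each such $P$ Lemma~\ref{L10.1} already shows that the bracket in \eqref{10.10} lies in $\calI_1$; hence, again by linearity, the bracket is in $\calI_1$ for every polynomial, which gives the asserted trace-class statement and makes the left-hand side of \eqref{10.10} meaningful. Accordingly I set
\[
T_n=\tr\bigg(A^n-B^n-\f{d}{d\theta}A(\theta)^n\Big|_{\theta=0}\bigg),\qquad n=0,1,2,\dots,
\]
and note $T_0=T_1=0$, since the bracket vanishes identically for $P=1$ and for $P=z$ (there $\f{d}{d\theta}A(\theta)|_{\theta=0}=A-B$).

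First I would define the nonnegative Fourier coefficients of $\eta$ by imposing \eqref{10.10} on monomials. Writing $\hat\eta_m=\int_0^{2\pi}e^{im\theta}\eta(e^{i\theta})\,\f{d\theta}{2\pi}$ and using $(z^n)''=n(n-1)z^{n-2}$, the required identity \eqref{10.10} becomes $T_n=n(n-1)\hat\eta_{n-2}$, so I am forced to set $\hat\eta_m=T_{m+2}/\big((m+2)(m+1)\big)$ for $m\ge0$. The crude bound \eqref{10.3} then yields $\abs{\hat\eta_m}\le\tfrac12\|A-B\|_{\calI_2}^2$ uniformly in $m$. For the negative coefficients I impose conjugate symmetry $\hat\eta_{-m}=\overline{\hat\eta_m}$ (which is exactly the relation a real distribution must obey, and which equals the recipe above applied to the pair $(A^*,B^*)=(A^{-1},B^{-1})$, using $\overline{T_n}=\tr(\,\cdot\,)$ with $A^*,B^*$ in place of $A,B$). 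Since any two-sided bounded sequence of Fourier coefficients defines a distribution on $\partial\bbD$ — for instance $\eta=(1-\partial_\theta^2)g$, where $g$ is the continuous function with coefficients $\hat\eta_m/(1+m^2)$ — this produces the sought $\eta$.

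The trace formula \eqref{10.10} then holds for all polynomials essentially by construction. Indeed, for $P=\sum_k a_k z^k$ one has $P''=\sum_{j\ge0}a_{j+2}(j+2)(j+1)z^j$, so pairing against $\eta$ gives $\sum_{j\ge0}a_{j+2}(j+2)(j+1)\hat\eta_j=\sum_{j\ge0}a_{j+2}T_{j+2}=\sum_k a_k T_k$, which is the left-hand side of \eqref{10.10} by linearity and $T_0=T_1=0$; note that only the nonnegative moments enter, so the choice of negative moments does not disturb this. Reality is then built into the nonzero modes through $\hat\eta_{-m}=\overline{\hat\eta_m}$. The one point that is \emph{not} automatic is the zero mode $\hat\eta_0=\tfrac12\tr\big((A-B)^2\big)$, which is shared between the analytic and anti-analytic parts and is pinned by $P=\tfrac12 z^2$; reconciling it with reality forces one to exploit the genuinely unitary structure $A^*=A^{-1}$ (rather than any formal self-adjointness), and I expect this to be the main obstacle in the argument.

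Finally, the moment decay \eqref{10.11} is precisely where the crude estimate \eqref{10.3} is insufficient and the refined bound \eqref{10.4} is required. From $\text{LHS of \eqref{10.3}}=o(n^2)$ one gets $\hat\eta_m=T_{m+2}/\big((m+2)(m+1)\big)=o(1)$ as $m\to+\infty$, and by conjugate symmetry also as $m\to-\infty$, which is exactly \eqref{10.11}. The hardest analytic ingredient is thus already localized in Lemma~\ref{L10.1}, namely the $o(n^2)$ bound obtained from the splitting $X=X_\veps^{(1)}+X_\veps^{(2)}$ into a trace-class part and an arbitrarily small Hilbert--Schmidt part; granting that, the construction of $\eta$ and the verification of \eqref{10.10}--\eqref{10.11} are routine, with the reality of the zero mode the only delicate residual point.
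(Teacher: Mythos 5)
Your construction is essentially the paper's proof, step for step: the paper sets $c_0=c_1=0$, $c_n=[n(n-1)]^{-1}\tr\bigl(A^n-B^n-\f{d}{d\theta}A(\theta)^n\big|_{\theta=0}\bigr)$ for $n\geq 2$, and $c_n=\overline{c_{-n}}$ for $n\leq -1$ (your $\widehat\eta_m=T_{m+2}/[(m+2)(m+1)]$ with conjugate symmetry), invokes Lemma~\ref{L10.1} --- in particular the $o(n^2)$ refinement \eqref{10.4} rather than the crude bound \eqref{10.3} --- to get $c_n=o(1)$ and hence both the existence of the distribution and \eqref{10.11}, and then checks \eqref{10.10} on monomials, extending to all polynomials by linearity exactly as you do.

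One comment on the point you flag as the ``main obstacle,'' the reality of the zero mode: the paper does not resolve it; its proof never checks that $c_2=\tfrac12\tr\bigl((A-B)^2\bigr)$, which is the mean $\int_0^{2\pi}\eta(e^{i\theta})\,\f{d\theta}{2\pi}$ of the constructed distribution, is real. In fact it need not be: take $A=\diag(i,1)$ and $B=I$ on $\bbC^2$ (both unitary, $A-B$ finite rank); then $\tr((A-B)^2)=(i-1)^2=-2i$. So if $P''$ in \eqref{10.10} is read as $d^2P/dz^2$, no \emph{real} distribution can satisfy \eqref{10.10} for $P(z)=\tfrac12 z^2$ in this example, and the unitary structure $A^*=A^{-1}$ you hoped to exploit cannot repair this. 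Your instinct that this is the delicate point is therefore sound, but it is an unaddressed defect of the theorem as stated (or of the implicit convention for $P''$) in the paper itself, not a gap of your argument relative to the paper's.
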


\begin{remarks} 1. As usual, we use
$\int_0^{2\pi} f(e^{i\theta}) \eta(e^{i\theta}) \f{d\theta}{2\pi}$
as shorthand for the distribution $\eta$ acting on the function $f$.

\smallskip
2. As we will discuss, $\eta$ is determined by \eqref{10.10} up to three real constants
in an affine term.

\smallskip
3. For a sharp condition on the class of functions for which Neidhardt's version
of Koplienko's trace formula for unitary operators holds, we refer to Peller \cite{Pe05}.
\end{remarks}
\begin{proof}
Let $c_n$, $n\in\bbZ$, be defined by
\begin{equation}\lb{10.12}
c_n =  \begin{cases} 0, & n=0,1, \\
[n(n-1)]^{-1} \,  \tr \bigl(A^n-B^n - \f{d}{d\theta}\, A(\theta)^n
\big|_{\theta=0}\big), & n\geq 2, \\
\overline{c_{-n}}, & n\leq -1.
\end{cases}
\end{equation}
By Lemma \ref{L10.1}, $c_n=o(1)$ as $n\to\infty$, so there is a distribution
$\eta=\eta(\dott;A,B)$ satisfying
\begin{equation} \lb{10.13}
c_n=\int_0^{2\pi} e^{i (n-2) \theta} \eta(e^{i\theta})\, \f{d\theta}{2\pi}, \quad n\geq 2.
\end{equation}

By \eqref{10.12}, we have \eqref{10.10} for $P(z)=z^n$ for $n\geq 2$ and both sides are
zero for $P(z)=z^m$, $m=0,1$. Thus, \eqref{10.10} holds for all polynomials.
\end{proof}

For any $c_0\in\bbR$, $c_1\in\bbC$, we can add $c_0 + c_1 e^{i\theta} + \bar c_1 e^{-i\theta}$
to $\eta$ without changing the right-hand side of \eqref{10.10}. We wonder if $\eta$ is always
in $L^1(\partial\bbD)$ with $\eta\geq 0$ for some choice of $c_0$ and $c_1$. The condition
$c_n\to 0$ is, of course, consistent with $\eta\in L^1(\partial\bbD)$.

\section{Open Problems and Conjectures} \lb{s11}

While we have found some new aspects of $\eta$ here and summarized much of the prior
literature, there are many open issues. The most important one concerns properties of
$\eta$ and the invariance of the a.c.\ spectrum:

\begin{conjecture}\lb{Con11.1} Suppose $A,B$ are self-adjoint with $(A-B)\in\calI_2$ and
that on some interval $(a,b)\subset\sigma(A)\cap\sigma(B)$, we have $\eta(\dott;A,B)$
and $\eta(\dott;B,A)$ are of bounded variation with distributional derivatives in
$L^p ((a,b);d\lambda)$ on $(a,b)$ for some $p>1$. Then
$\sigma_\ac(A)\cap(a,b)=\sigma_\ac (B) \cap (a,b)$.
\end{conjecture}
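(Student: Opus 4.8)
The plan is to mimic the a.c.\ invariance argument given in the appendix for the KrSSF, where the only genuine input is the a.e.\ existence of finite, nonzero nontangential boundary values of the perturbation determinant, but to run it with $\det_2$ in place of the Fredholm determinant $\det$. By Theorem~\ref{newT7.1},
\[
\log\det_2((A-z)(B-z)^{-1}) = -\int_{\bbR} \frac{\eta(\lambda;A,B)}{(\lambda-z)^2}\, d\lambda, \quad z\in\bbC_+,
\]
so the first task is to show that this function has finite boundary values as $z\to\lambda+i0$ for a.e.\ $\lambda\in(a,b)$. Theorem~\ref{t6.2} shows that this can fail without extra hypotheses, and the bounded-variation/$L^p$ assumption is tailored precisely to rescue it.

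To carry this out I would localize. Writing $\int_{\bbR}=\int_{(a,b)}+\int_{\bbR\setminus(a,b)}$, the second integral is analytic in $z$ across the open interval $(a,b)$ and so poses no problem. On $(a,b)$, using that $\eta(\dott;A,B)$ is of bounded variation, integration by parts gives
\[
\int_a^b \frac{\eta(\lambda;A,B)}{(\lambda-z)^2}\, d\lambda
= \left[\frac{-\eta(\lambda;A,B)}{\lambda-z}\right]_{\lambda=a}^{\lambda=b}
+ \int_a^b \frac{\eta'(\lambda;A,B)}{\lambda-z}\, d\lambda,
\]
where $\eta'(\dott;A,B)$ is the distributional $\lambda$-derivative, the boundary terms are analytic across the interior of $(a,b)$, and the last term is the Cauchy transform of $\eta'(\dott;A,B)\in L^p((a,b))$. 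By the M.\ Riesz theorem ($L^p$-boundedness of the Hilbert transform for $1<p<\infty$) together with the a.e.\ existence of the Hilbert transform of an $L^p$ function, this Cauchy transform has finite boundary values for a.e.\ $\lambda\in(a,b)$. Hence $\log\det_2((A-z)(B-z)^{-1})$, and therefore $\det_2((A-z)(B-z)^{-1})$ itself, has finite boundary values a.e.\ on $(a,b)$; being an exponential, that boundary value is automatically \emph{nonzero}. Since $\det_2$ is not multiplicative, I would apply the same argument to the hypothesis on $\eta(\dott;B,A)$ to obtain finite, nonzero boundary values of its companion $\det_2((B-z)(A-z)^{-1})$ as well.

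The main obstacle is the last implication: passing from good boundary values of the two modified determinants to equality of the a.c.\ spectra on $(a,b)$. What is needed is a ``$\det_2$-version'' of the fact underlying the appendix, namely that finite, nonzero a.e.\ boundary values of the (modified) perturbation determinant and its companion force the a.c.\ parts of $A$ and $B$ to be mutually absolutely continuous on $(a,b)$. Two features of the trace-class argument are unavailable here. First, for the KrSSF $\log\det$ is literally a difference of Herglotz functions and hence of bounded type, so its finite nonzero boundary values come for free and carry the scattering phase $\Ima\log\det(\lambda+i0)=\pi\xi(\lambda)$; here the kernel $(\lambda-z)^{-2}$ is not Herglotz, the $L^p$-regularity above replaces bounded-type theory, and the natural boundary datum $\Ima\log\det_2(\lambda+i0)$ is (a multiple of) the \emph{density} $\eta'(\lambda)$ rather than $\xi$. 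Second, and more seriously, the appendix works with the honest determinant $\det$, whereas in the $\calI_2$ setting only $\det_2$ is defined; the two differ by the factor $e^{-\tr(X(B-z)^{-1})}$ whose trace need not exist when $X\in\calI_2\setminus\calI_1$. One must therefore show that the comparison of a.c.\ parts on $(a,b)$ is governed by $\det_2$ alone, i.e.\ that the possibly singular information absorbed into the regularizing factor does not affect the conclusion. Establishing this intrinsic $\det_2$-criterion for a.c.\ invariance is where I expect the genuine difficulty to lie, and is presumably why the statement is recorded here as a conjecture rather than a theorem.
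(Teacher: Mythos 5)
The statement you are trying to prove is recorded in the paper as Conjecture~\ref{Con11.1}; the paper contains no proof of it, only a sketch (in Section~\ref{s11} and in the Remark following Theorem~\ref{tA.11}) of the strategy one hopes will work. Your proposal follows exactly that envisioned strategy, and its first half is sound: under the bounded-variation/$L^p$ hypothesis, $\eta(\dott;A,B)$ is absolutely continuous on $(a,b)$, your integration by parts is legitimate, and the a.e.\ existence of nontangential boundary values of the Cauchy transform of an $L^p$ ($p>1$) function gives finite, nonzero boundary values of $\det_2((A-z)(B-z)^{-1})$ a.e.\ on $(a,b)$; the same applies to $\eta(\dott;B,A)$ and its companion determinant (correctly treated separately, since $\det_2$ is not multiplicative --- this is precisely why the conjecture hypothesizes control of both $\eta$'s). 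This is the step that Theorem~\ref{t6.2} shows can fail in general, and it is what the regularity hypothesis buys.

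But, as you yourself say, this is where the proof stops, and the gap is genuine rather than technical. The trace class argument of Theorem~\ref{tA.11} rests on Lemma~\ref{lA.9}: off $\sigma_\ac(B)$ the KrSSF is a.e.\ integer-valued, by \eqref{A.37}, because it is an $L^1$-convergent sum of rank-one shift functions each valued in $\{-1,0,1\}$ there; this integrality is what forces the perturbation determinant in \eqref{A.38} to have \emph{real} boundary values off the a.c.\ spectrum, and reality versus nonreality is the engine of the contradiction. In the $\calI_2$ setting the natural boundary datum, as you note, is $\Ima\log\det_2((A-\lambda-i0)(B-\lambda-i0)^{-1})=-\pi\,\eta'(\lambda;A,B)$ a.e., i.e.\ the \emph{density} of the KoSSF, and there is no known quantization or reality statement for it off $\sigma_\ac(A)$: the chain rule \eqref{1.7a} has the error term $\delta\eta$, $\eta$ is nonlinear in $A-B$, and the rank-one decomposition trick has no analogue. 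This is exactly the missing ``input from some other sources'' demanded in \eqref{A.43} of the paper's closing Remark, where formula \eqref{A.42} (the $\det_2$-analogue of Lemma~\ref{lA.10}, with the extra factor $e^{(\varphi,(A-z)^{-1}\varphi)}$ that you flag) is set up so that, \emph{if} real-valuedness of the $\det_2$-ratio could be established a.e.\ on a putative set $\calE\subseteq\sigma_\ac(B)$ with $\calE\cap\sigma_\ac(A)=\emptyset$, the proof of Theorem~\ref{tA.11} would go through verbatim. So your proposal reproduces the intended reduction and correctly locates the open core of the problem, but it does not prove the statement --- nor does the paper, which is why it is a conjecture.
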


In the appendix, we prove the invariance for $\calI_1$-perturbations using boundary
values of $\det((A-z)(B-z)^{-1})$. When $\eta$ has the properties in the conjecture,
$\det_2((A-z)(B-z)^{-1})$ has boundary values and we hope those can be used to get
the invariance of a.c.\ spectrum. While we made the conjecture assuming control of
$\eta(\dott;A,B)$ and $\eta(\dott;B,A)$, we wonder if only one suffices. Similarly,
we wonder if $L^p$, $p>1$, can be replaced by the weaker condition that the
derivative is a sum of an $L^1$-piece and the Hilbert transform of an $L^1$-piece.

\begin{OQ2} Is the $\eta$ we constructed in Section~\ref{s10} for the unitary case
an $L^1(\partial\bbD)$ function?
\end{OQ2}

\begin{OQ3} Is the class $\eta(\calI_2)$ introduced in Section~\ref{s5a} all of
$L^1(\bbR;d\lambda)$ $($of compact support\,$)$, or only the Riemann integrable
functions, or something in between?
\end{OQ3}

\begin{OQ4} Is the class $\eta(\calI_1)$ all functions of bounded variation or a
subset, and if so, what subset?
\end{OQ4}

\section*{Appendix: On the KrSSF $\xi(\dott;A,B)$} \lb{App}
\renewcommand{\theequation}{A.\arabic{equation}}
\renewcommand{\thetheorem}{A.\arabic{theorem}}
\setcounter{theorem}{0}
\setcounter{equation}{0}

Both for comparison and because the Krein spectral shift (KrSSF) is needed in our
construction of the KoSSF, we present the basics of the KrSSF here. Most of the
results in this appendix are known (see, e.g., \cite[Sect.\ 19.1.4]{BW83}), \cite{BS75},
\cite{BY93}, \cite{Kr53}, \cite{Kr62}, \cite{Kr83}, \cite{SM94}, \cite{Vo87},  
\cite[Ch.\ 8]{Ya92}, \cite{Ya07} and the references therein) so this appendix is 
largely pedagogical, but our argument proving the invariance of a.c.\ spectrum 
under trace class perturbations at the end of this appendix is new. Moreover, 
we fill in the details of an approach sketched in \cite[Ch.~11]{STI} exploiting 
the method Gesztesy--Simon \cite{GS95} used to construct the rank-one KrSSF. 
Most approaches define $\xi$ via perturbation determinants.

We will need the following strengthening of Theorem~\ref{T2.2}:

\begin{theorem}\lb{TA.1} Let $f$ be a function of compact support whose Fourier
transform $\widehat f$ satisfies \eqref{2.5} for $n=1$ {\rm{(}}in particular,
$f$ can be $C^{2+\veps}(\bbR)${\rm{)}}. Then,
\begin{SL}
\item[{\rm{(a)}}] For any bounded self-adjoint operators $A,B$ with $(A-B)\in\calI_1$,
$(f(A)-f(B)) \in\calI_1$. Moreover,
\begin{equation}\lb{A.1}
\|f(A)-f(B)\|_{\calI_1} \leq \| k\widehat f\|_1 \|A-B\|_{\calI_1},
\end{equation}
where
\begin{equation}\lb{A.2}
\|k\widehat f\|_1 \leq \int_{\bbR} k\abs{\widehat f(k)}\, dk.
\end{equation}
\item[{\rm{(b)}}] Let $B_n,B$, $n\in\bbN$, be uniformly bounded self-adjoint operators
such that $B_n\underset{n\to\infty}{\longrightarrow}B$ strongly. Let $X_n,X$, $n\in\bbN$,
be a sequence of self-adjoint trace class operators such that $\|X-X_n\|_{\calI_1}
\underset{n\to\infty}{\longrightarrow} 0$. Then,
\begin{equation}\lb{A.3}
\tr (f(B_n+X_n)-f(B_n)) \underset{n\to\infty}{\longrightarrow}  \tr (f(B+X)-f(B)).
\end{equation}
\end{SL}
\end{theorem}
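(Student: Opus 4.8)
The plan is to obtain both parts from a single DuHamel-type integral representation of $f(A)-f(B)$, exactly as in the proof of Theorem~\ref{T2.2}, but keeping track of the stronger $\calI_1$-estimate that is available when $n=1$. First I would write, for $f$ with $\widehat f$ satisfying \eqref{2.5} for $n=1$,
\[
f(A)-f(B)=(2\pi)^{-1/2}\int_{\bbR}\widehat f(t)\,\bigl(e^{itA}-e^{itB}\bigr)\,dt,
\]
and use the $k=1$ case of \eqref{2.1} in Proposition~\ref{P2.1}, namely
\[
e^{itA}-e^{itB}=i\int_0^t e^{isB}(A-B)e^{i(t-s)B}\,ds,
\]
which follows from DuHamel. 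Since $\|e^{isB}\|=\|e^{i(t-s)B}\|=1$, the integrand has $\calI_1$-norm at most $\abs{t}\,\|A-B\|_{\calI_1}$, so $\|e^{itA}-e^{itB}\|_{\calI_1}\leq\abs{t}\,\|A-B\|_{\calI_1}$. Inserting this into the Fourier representation and pulling the norm inside the integral gives \eqref{A.1}, with the constant $\|k\widehat f\|_1$ interpreted as the $L^1$-norm of the multiplier; the bound \eqref{A.2} is then just $\|k\widehat f\|_1\le\int_{\bbR}\abs{k}\,\abs{\widehat f(k)}\,dk$, which is finite precisely by \eqref{2.5} with $n=1$. (One should note that $\abs{t}$ can be replaced by $\abs{t}$ throughout and that replacing $e^{itA}-e^{itB}$ directly by the first-order DuHamel term is what upgrades the $\calI_2$-type estimate to an $\calI_1$ one; this is the only substantive point in part~(a).)

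For part~(b), the strategy is a standard strong-convergence-plus-uniform-bound argument. Write $A_n=B_n+X_n$. Using the same integral representation,
\[
\tr\bigl(f(A_n)-f(B_n)\bigr)=(2\pi)^{-1/2}\int_{\bbR}\widehat f(t)\,\tr\bigl(e^{itA_n}-e^{itB_n}\bigr)\,dt,
\]
and similarly for $B$. By the estimate from part~(a), $\abs{\tr(e^{itA_n}-e^{itB_n})}\le\abs{t}\,\|X_n\|_{\calI_1}$, and since $\|X_n\|_{\calI_1}$ is bounded (as $X_n\to X$ in $\calI_1$), the integrands are dominated uniformly in $n$ by $\abs{t}\,\abs{\widehat f(t)}\,\sup_n\|X_n\|_{\calI_1}\in L^1(\bbR;dt)$. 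So by dominated convergence it suffices to show, for each fixed $t$, that $\tr(e^{itA_n}-e^{itB_n})\to\tr(e^{itA}-e^{itB})$. For this I would split
\[
e^{itA_n}-e^{itB_n}-\bigl(e^{itA}-e^{itB}\bigr)
=\bigl(e^{itA_n}-e^{itA}\bigr)-\bigl(e^{itB_n}-e^{itB}\bigr)
\]
is not quite the right split; instead use the DuHamel term: $\tr(e^{itA_n}-e^{itB_n})=i\int_0^t\tr(e^{isB_n}X_n e^{i(t-s)B_n})\,ds=i\int_0^t\tr(X_n e^{itB_n})\,ds=it\,\tr(X_n e^{itB_n})$ by cyclicity. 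Then
\[
\tr(X_n e^{itB_n})-\tr(X e^{itB})=\tr\bigl(X(e^{itB_n}-e^{itB})\bigr)+\tr\bigl((X_n-X)e^{itB_n}\bigr),
\]
and the second term is bounded by $\|X_n-X\|_{\calI_1}\to0$, while the first tends to $0$ because $e^{itB_n}\to e^{itB}$ strongly (continuity of the functional calculus under strong convergence of uniformly bounded self-adjoint operators) and $X\in\calI_1$, so that $\tr(XC_n)\to\tr(XC)$ whenever $C_n\to C$ strongly with $\sup_n\|C_n\|<\infty$ (approximate $X$ by finite-rank operators in $\calI_1$-norm and use that trace against a fixed finite-rank operator is strongly continuous).

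The main obstacle, such as it is, is purely bookkeeping: making sure the interchange of the trace with the $t$-integral and with the $s$-integral in DuHamel's formula is justified, which it is because everything lives in $\calI_1$ with the stated norm bounds, and the slightly delicate lemma that $\tr(XC_n)\to\tr(XC)$ for $X\in\calI_1$ and $C_n\to C$ strongly and bounded — this is where the finite-rank approximation of $X$ is needed, since strong convergence alone does not control $\calI_1$-pairings without it. Once that lemma is in hand, both (a) and (b) are immediate from the first-order DuHamel identity and Fourier inversion, and no further input beyond Proposition~\ref{P2.1} is required.
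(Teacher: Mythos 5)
Your overall strategy --- the Fourier representation of $f$, a first-order DuHamel identity to get the $\calI_1$-bound, dominated convergence in $t$, and the lemma that $\tr(C_nX_n)\to\tr(CX)$ when $X_n\to X$ in $\calI_1$ and $C_n\to C$ strongly with uniform bounds --- is exactly the paper's. But there is a genuine error in the identity you rely on. DuHamel's formula for the \emph{difference} is
\begin{equation*}
e^{itA}-e^{itB}=i\int_0^t e^{isA}\,(A-B)\,e^{i(t-s)B}\,ds ,
\end{equation*}
with one factor built from $A$ and one from $B$ (this is what underlies \eqref{A.4}). What you wrote, with $e^{isB}$ in both slots, is not the difference but the derivative $\f{d}{d\alpha}\,e^{it(B+\alpha(A-B))}\big|_{\alpha=0}$. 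For part (a) the slip is harmless, since all the exponentials are unitary and the bound $\|e^{itA}-e^{itB}\|_{\calI_1}\le\abs{t}\,\|A-B\|_{\calI_1}$ survives. But in part (b) you use the false identity substantively: the cyclicity step $\tr(e^{itA_n}-e^{itB_n})=i\int_0^t\tr(X_ne^{itB_n})\,ds=it\,\tr(X_ne^{itB_n})$ is valid only for the derivative term, not for the difference. Indeed, if $\tr(e^{itA}-e^{itB})=it\,\tr(Xe^{itB})$ held, then $\tr(f(A)-f(B))=\tr(Xf'(B))$ for all nice $f$, and by \eqref{2.14} the left-hand side of \eqref{2.13} would vanish identically, forcing $\eta(\dott;A,B)=0$ whenever $(A-B)\in\calI_1$ --- contradicting Theorem~\ref{T3.4}, which gives $\int_{\bbR}\abs{\eta}\,d\lambda=\tfrac12\|A-B\|_{\calI_2}^2>0$ for $A\ne B$.

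The repair is routine and lands you on the paper's argument: with the correct DuHamel formula, $\tr(e^{itA_n}-e^{itB_n})=i\int_0^t\tr\big(X_n\,e^{i(t-s)B_n}e^{isA_n}\big)\,ds$; now apply your pairing lemma with $C_n=e^{i(t-s)B_n}e^{isA_n}$, which converges strongly to $e^{i(t-s)B}e^{isA}$ with $\|C_n\|=1$ (note $A_n=B_n+X_n\to A$ strongly because $X_n\to X$ in $\calI_1$, hence in operator norm), and then dominate in $s$ and in $t$. Your proof of the pairing lemma by finite-rank approximation is a perfectly adequate substitute for the paper's version via the canonical decomposition of $X$; the two are interchangeable.
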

\begin{proof} (a) is immediate from Proposition~\ref{P2.1} which implies
\begin{align}  \lb{A.4}
\begin{split}
& f(A)-f(B)  \\
& \quad =(2\pi)^{-1/2} \int_{\bbR} ik\widehat f(k) \biggl[\int_0^1 e^{i\beta kA}(A-B)
e^{i(1-\beta) kB}\, d\beta\biggr]\, dk.
\end{split}
\end{align}
This also implies (b) via the dominated convergence theorem, continuity of the functional
calculus (so $C_n \underset{n\to\infty}{\longrightarrow}  C$ strongly implies
$e^{it C_n} \underset{n\to\infty}{\longrightarrow}  e^{itC}$ strongly),
and the fact that if $X_n \underset{n\to\infty}{\longrightarrow}  X$ in $\calI_1$ and
$C_n \underset{n\to\infty}{\longrightarrow}  C$ strongly (with $C_n, C$ uniformly bounded),
then $\tr(C_nX_n) \underset{n\to\infty}{\longrightarrow} \tr (CX)$. This latter fact comes from
\begin{align*}
\abs{\tr(C_nX_n-CX)} &\leq \abs{\tr (C_n(X_n-X)-(C-C_n)X)} \\
&\leq \|C_n\|\, \|X_n-X_1\|_{\calI_1} + \abs{\tr((C-C_n)X)}
\end{align*}
and if $X=\sum_{m\in\bbN} \mu_m(X)\langle \varphi_m, \dott\rangle\psi_m$, then
$$
\abs{\tr((C_n-C)X)} \leq \sum_{m\in\bbN} \mu_m(X) \abs{\langle\varphi_m, (C_n-C)\psi_m\rangle}
\underset{n\to\infty}{\longrightarrow}  0
$$
by the dominated convergence theorem.
\end{proof}

Part (a) in Theorem~\ref{TA.1}, in a slightly more general form, is stated and 
proved in \cite[p.~141]{Kr83}.

Now let $B$ be a bounded self-adjoint operator and $\varphi$ a unit vector. 
For $\alpha\in \bbR$, define
\begin{equation}\lb{A.5}
A_\alpha =B+\alpha (\varphi,\dott)\varphi
\end{equation}
and for $z\in\bbC\backslash \bbR$,
\begin{align}
F_\alpha(z) &= (\varphi, (A_\alpha -z)^{-1}\varphi),   \lb{A.6} \\
G_\alpha(z) &= 1+\alpha F_0(z).   \lb{A.7}
\end{align}
The resolvent formula implies (see \cite[Sect.\ 11.2]{STI})
\begin{equation}\lb{A.8}
F_\alpha(z) =\f{F_0(z)}{1+\alpha F_0(z)}, \quad z\in\bbC\backslash \bbR,
\end{equation}
and that
\begin{equation} \lb{A.9} 
\begin{split}
 (A_\alpha &-z)^{-1} -(B-z)^{-1} \\ 
\quad & = -\f{\alpha}{1+\alpha F_0(z)}\,
((B-\bar z)^{-1}\varphi, \dott)(B-z)^{-1}\varphi,  
\quad  z\in\bbC\backslash \bbR,    
\end{split} 
\end{equation}
implying
\begin{align}
\begin{split}
\tr ((B-z)^{-1} -(A_\alpha -z)^{-1}) &= \f{\alpha}{1+\alpha F_0(z)}\,
(\varphi, (B-z)^{-2}\varphi)     \\
&= \f{d}{dz}\, \log (G_\alpha(z)), \quad z\in\bbC\backslash \bbR.   \lb{A.10}
\end{split}
\end{align}

\begin{theorem}\lb{TA.2} Let $B$ be a bounded self-adjoint operator and $A_\alpha$ 
given by \eqref{A.5} for $\alpha\in\bbR$ and $\varphi$ with $\|\varphi\|=1$. Then 
for a.e.\ $\lambda\in\bbR$, 
\begin{equation}\lb{A.11}
\xi_\alpha(\lambda) = \f{1}{\pi}\, \lim_{\veps\downarrow 0}\, \arg (G_\alpha (\lambda +
i\veps))
\end{equation}
exists and satisfies
\begin{SL}
\item[{\rm{(i)}}]
\begin{equation}\lb{A.12}
0 \leq \pm\xi_\alpha (\dott) \leq 1 \, \text{ if } \, 0 < \pm\alpha.
\end{equation}

\item[{\rm{(ii)}}] $\xi_\alpha(\lambda)=0$ if $\lambda\leq \min(\sigma(A_\alpha)\cup\sigma(B))$
or $\lambda\geq\max(\sigma(A_\alpha)\cup\sigma(B))$.

\item[{\rm{(iii)}}]
\begin{equation}\lb{A.13}
\int\abs{\xi_\alpha(\lambda)}\, d\lambda =\abs{\alpha}
\end{equation}

\item[{\rm{(iv)}}] For any $z\in\bbC\backslash \bbR$,
\begin{equation}\lb{A.14}
G_\alpha(z) =\exp\bigg( \int_{\bbR} (\lambda -z)^{-1} \xi_\alpha (\lambda) \, d\lambda
\bigg).
\end{equation}

\item[{\rm{(v)}}]
\begin{equation}\lb{A.15}
\det ((A_\alpha -z)(B-z)^{-1}) =G_\alpha (z).
\end{equation}

\item[{\rm{(vi)}}] For any $z\in\bbC\backslash \bbR$,
\begin{equation}\lb{A.16}
\tr((B-z)^{-1} -(A_\alpha -z)^{-1}) =\int_{\bbR} (\lambda -z)^{-2}
\xi_\alpha (\lambda)\, d\lambda.
\end{equation}

\item[{\rm{(vii)}}] For any $f$ satisfying the hypotheses of Theorem~\ref{TA.1},
\begin{equation}\lb{A.17}
\tr(f(A_\alpha)-f(B)) =\int_{\bbR} f'(\lambda) \xi_\alpha (\lambda)\, d\lambda.
\end{equation}
\end{SL}
\end{theorem}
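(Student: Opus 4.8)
The plan is to base everything on the single observation that $F_0(z)=(\varphi,(B-z)^{-1}\varphi)$ is a Herglotz (Nevanlinna) function: since $\Ima F_0(z)=\Ima(z)\,\|(B-z)^{-1}\varphi\|^2$, it maps $\bbC_+$ into itself and, by the spectral theorem, $F_0(z)=\int_{\bbR}(\lambda-z)^{-1}\,d\mu_\varphi(\lambda)$ with $\mu_\varphi$ the probability spectral measure of $B$ at $\varphi$. Consequently $G_\alpha=1+\alpha F_0$ maps $\bbC_+$ into $\bbC_+$ when $\alpha>0$ and into $\bbC_-$ when $\alpha<0$, with $G_\alpha(z)\to1$ as $z\to i\infty$. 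I would fix the branch of $\log G_\alpha$ on $\bbC_+$ by the normalization $\log G_\alpha(z)\to0$ along the imaginary axis; then $\log G_\alpha$ is itself Herglotz (resp.\ anti-Herglotz), with $\Ima\log G_\alpha=\arg G_\alpha$ taking values in $(0,\pi)$ for $\alpha>0$ and in $(-\pi,0)$ for $\alpha<0$. Existence of the boundary value in \eqref{A.11} for a.e.\ $\lambda$ is then the Fatou theorem for Herglotz functions, and the bound in (i) is immediate from the range of $\arg G_\alpha$.

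Two of the identities then come essentially for free. For (v) I would compute $(A_\alpha-z)(B-z)^{-1}=I+\alpha(\varphi,\dott)\varphi\,(B-z)^{-1}$, a rank-one perturbation of $I$, and apply $\det(I+(u,\dott)w)=1+(u,w)$ to read off $\det((A_\alpha-z)(B-z)^{-1})=1+\alpha F_0(z)=G_\alpha(z)$. For (ii), I would note that off $\sigma(A_\alpha)\cup\sigma(B)$ the function $G_\alpha$ extends analytically and is real, that its real zeros lie in $\sigma(A_\alpha)$ (they are the poles of $F_\alpha$ via \eqref{A.8}), and that $G_\alpha\to1$ at $\pm\infty$; hence $G_\alpha$ is positive on each unbounded component of $\bbR\setminus(\sigma(A_\alpha)\cup\sigma(B))$, forcing $\arg G_\alpha\to0$ there, so $\xi_\alpha=0$.

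The analytic heart is (iv), and I expect it to be the main obstacle. Here I would invoke the structure theory of Herglotz functions: because $H:=\log G_\alpha$ (or $-H$, when $\alpha<0$) has imaginary part bounded in $[0,\pi]$, its canonical representation carries \emph{no} linear term and a \emph{purely absolutely continuous} representing measure of density $\tfrac1\pi\Ima H(\lambda+i0)=\xi_\alpha(\lambda)$; the compact support of $\xi_\alpha$ from (ii) lets me discard the $\lambda/(1+\lambda^2)$ counterterm, and the normalization $H(i\infty)=0$ kills the additive constant, yielding \eqref{A.14}. Justifying rigorously that the boundedness of $\Ima H$ excludes any singular part and any linear growth in the representing measure is the delicate point, though it is standard Herglotz-function material. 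As a cross-check, differentiating \eqref{A.14} in $z$ must reproduce the already-established trace identity \eqref{A.10}, which simultaneously delivers (vi) (that is, \eqref{A.16}).

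The remaining properties follow by bookkeeping. For (iii), I would use $F_0(z)\sim-1/z$ (as $\mu_\varphi$ is a probability measure) to get $\log G_\alpha(z)\sim-\alpha/z$, compare with the large-$z$ expansion $-z^{-1}\int_{\bbR}\xi_\alpha\,d\lambda+O(z^{-2})$ of the right-hand side of \eqref{A.14} to obtain $\int_{\bbR}\xi_\alpha\,d\lambda=\alpha$, and combine with the definite sign from (i) to conclude $\int_{\bbR}|\xi_\alpha|\,d\lambda=|\alpha|$. Finally, for (vii) I would first verify \eqref{A.17} for the resolvent functions $f_z(\lambda)=(\lambda-z)^{-1}$, where it reduces directly to (vi); extend by linearity to rational functions with poles off $\bbR$; and then pass to general $f$ meeting the hypotheses of Theorem~\ref{TA.1} by approximation, using the trace-norm estimate \eqref{A.1} to control $f(A_\alpha)-f(B)$ together with uniform control of $f'$ in the limit.
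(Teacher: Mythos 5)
Your proposal is correct, and for everything through part (vi) it is essentially the paper's own proof: the same Herglotz-theoretic setup ($F_0$ Herglotz, $\pm\Ima(G_\alpha)>0$ for $\pm\alpha>0$, the branch of $\log G_\alpha$ normalized at $i\infty$), the same appeal to the Aronszajn--Donoghue exponential representation---boundedness of $\Ima \log G_\alpha$ in $[0,\pi]$ forcing a purely absolutely continuous representing measure with density $\xi_\alpha$ and no linear or constant term---the same rank-one determinant computation for (v), the same positivity-of-$G_\alpha$ argument on the unbounded real components for (ii), the same $z\to\infty$ asymptotics for (iii), and (vi) obtained by combining \eqref{A.14} with \eqref{A.10}. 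The only genuine divergence is part (vii). The paper stays with \eqref{A.16}: it identifies Taylor coefficients of both sides about $z=\infty$ to get the moment identities $\tr(B^n-A_\alpha^n)=\int_{\bbR} n\lambda^{n-1}\xi_\alpha(\lambda)\,d\lambda$, sums these to the exponential identity \eqref{A.27}, and then gets \eqref{A.17} for the whole class of Theorem~\ref{TA.1} in one stroke from the Fourier representation \eqref{A.4}. You instead verify \eqref{A.17} on resolvents (where it is literally (vi)), extend by partial fractions to rational functions, and close by density. That route works, but the closing step should be done in the right norm: both sides of \eqref{A.17} are controlled by $\int_{\bbR}\abs{k}\,\abs{\widehat g(k)}\,dk$ (the left side via \eqref{A.1}, the right side since $\|g'\|_\infty\leq(2\pi)^{-1/2}\int_{\bbR}\abs{k}\,\abs{\widehat g(k)}\,dk$ and $\xi_\alpha\in L^1(\bbR;d\lambda)$), so you need rational approximants of $f$ converging in that Fourier norm---obtainable, e.g., by Poisson regularization plus Riemann sums---rather than merely uniform control of $f$ and $f'$ as stated. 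The paper's route avoids this extra approximation layer, which is what \eqref{A.4} buys; yours buys independence from the semigroup/Fourier machinery, staying entirely with resolvents.
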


\begin{remarks} 1. This theorem and its proof are essentially the same as the
starting point of Krein's construction in
\cite{Kr53} (see also \cite[p.~134--136]{Kr83} or \cite[Sect.\ 3]{BY93}).

\smallskip
2. In \eqref{A.11}, $\arg(G_\alpha(z))$ is defined uniquely for
$\Ima (z)>0$ by demanding continuity in $z$ and
\begin{equation}\lb{A.18x}
\lim_{y\uparrow\infty}\, \arg(G_\alpha(iy)) =0.
\end{equation}
For $\Ima (z)<0$ one has $\overline{G_\alpha(\overline{z})}=G_\alpha(z)$.

\smallskip
3. By \eqref{A.9}, $(A_\alpha-z)(B-z)^{-1}$ is of the form $I+$ rank one, and so lies in
$I+\calI_1$. The $\det(\cdot)$ in \eqref{A.15} is the Fredholm determinant (see 
\cite[Ch.\ 3]{STI}). This is the same as the finite-dimensional determinant $\det(C)$ 
for $I+D$ with $D$ finite rank and $C=(I+D)\restriction \calK$ where $\calK$ is any 
finite-dimensional space containing $\ran (D)$ and $(\ker (D))^\perp$.

\smallskip
4. The exponential Herglotz representation basic to this proof goes back to
Aronszajn and Donoghue \cite{AD56}.

\smallskip
5. Comparing \eqref{A.17} and \eqref{1.5}, one concludes
$$
\xi_\alpha(\dott)=\xi(\dott;A,B).
$$
\end{remarks}

\begin{proof} By the spectral theorem, there is a probability measure $d\mu_\alpha
(\lambda)$ such that
\begin{equation}\lb{A.18}
F_\alpha(z) =\int_{\bbR} \f{d\mu_\alpha (\lambda)}{\lambda -z}.
\end{equation}
In particular,
\begin{equation}\lb{A.19}
\Ima (F_0(z)) >0 \, \text{ if } \, \Ima (z)>0,
\end{equation}
so on $\bbC_+=\{z\in\bbC\,|\, \Ima(z)>0\}$,
\begin{equation}\lb{A.20}
\pm\Ima (G_\alpha(z))>0 \, \text{ if } \, \pm\alpha >0.
\end{equation}
Since $G_\alpha (iy) \to 1$, as $y \uparrow \infty$, we can define
\[
\log(G_\alpha(z)) =H_\alpha (z)
\]
on $\bbC_+$ uniquely if we require
$H_\alpha (iy)\underset{y\uparrow \infty}{\longrightarrow} 0$. By \eqref{A.20},
\begin{equation}\lb{A.21}
0 < \pm\Ima (H_\alpha (\dott)) \leq \pi.
\end{equation}

By the general theory of Herglotz functions (see, e.g., \cite{Ar57}, \cite{AD56}),
the limit in \eqref{A.11}
exists and \eqref{A.12} holds by \eqref{A.21}. \eqref{A.21} also implies that the
limiting measure $\wlim_{\veps\downarrow 0} \pm\f{1}{\pi}
\Ima (H_\alpha (\lambda+i\veps))\, d\lambda$ in the Herglotz representation theorem
is purely absolutely continuous, hence \eqref{A.14} holds.

\eqref{A.16} then follows from \eqref{A.14} and \eqref{A.10}.

Since
\begin{equation}\lb{A.22}
F_\alpha(z) \underset{z\to\infty}{=} -z^{-1} + O(z^{-2}),
\end{equation}
\eqref{A.17} implies
\begin{equation}\lb{A.23}
G_\alpha(z) \underset{z\to\infty}{=} 1-\alpha z^{-1} + O(z^{-2}),
\end{equation}
and thus \eqref{A.14} implies
\begin{equation}\lb{A.24}
\int_{\bbR} \xi_\alpha (\lambda)\, d\lambda =\alpha,
\end{equation}
which, given \eqref{A.12}, implies \eqref{A.13}. This proves everything except
the parts (ii), (v), and (vii).

To prove \eqref{A.15}, we note that with $P_\varphi =(\varphi,\dott)\varphi$, we have
\[
(A_\alpha -z)(B-z)^{-1}=I+\alpha P_\varphi (B-z)^{-1},
\]
which, since $P_\varphi$ is rank one, implies
\begin{align*}
\det((A_\alpha-z)(B-z)^{-1}) &= 1+\tr (\alpha P_\varphi (B-z)) \\
&= 1+\alpha F_0(z) \\
&= G_\alpha (z).
\end{align*}

Let us prove (ii) for $\alpha >0$. The proof of $\alpha <0$ is similar. Let $a=\min
(\sigma(B))$, $b=\max(\sigma(B))$. Then, by \eqref{A.18},
\[
F'_0(x) =\int_{\bbR} \f{d\mu_0 (\lambda)}{(\lambda-x)^2} >0
\]
on $(-\infty, a)\cup(b,\infty)$ and $F_0 \underset{x\to\pm\infty}{\longrightarrow} 0$.
Thus, $F>0$ on $(-\infty,a)$ and $F<0$ on $(b,\infty)$. Let $f=\lim_{x\downarrow b} F(x)$
which may be $-\infty$. If $1+\alpha f<0$, there is a unique $c$ with $1+\alpha F_0(c)=0$,
and then $G_\alpha$ is positive on $(c,\infty)$. By \eqref{A.8}, $F_\alpha(z)$ is analytic
away from $(a,b) \cup\{c\}$. Thus, $\sigma(A_\alpha)\in (a,b)\cup\{c\}$ and $c=\max(\sigma(A_\alpha),
\sigma(B))$, so (ii) says that $\xi_\alpha(\lambda)=0$ on $(-\infty,b)$ and $(c,\infty)$.
Since $G_\alpha(x) >0$ there and $0<\arg( G_\alpha(z+i\veps))<\pi$, we see that
$\xi_\alpha (x) =0$ on these intervals.

Finally, we turn to (vii). Since $B^n-A_\alpha^n$ can be written as a telescoping series,
it is trace class and
\begin{equation}\lb{A.25}
\|B^n-A_\alpha^n\|_{\calI_1} \leq n\,[\sup(\|A_\alpha\|,\|B\|)]^{n-1}\|B-A\|_{\calI_1}.
\end{equation}
Thus, both sides of \eqref{A.16} are analytic about $z=\infty$, so identifying Taylor
coefficients,
\begin{equation}\lb{A.26}
\tr (B^n-A_\alpha^n ) =\int_{\bbR} n\lambda^{n-1} \xi_\alpha(\lambda)\, d\lambda.
\end{equation}
Summing Taylor series for $e^{z\lambda}$, using \eqref{A.25} and \eqref{A.26} proves
$(e^{zB}-e^{zA_\alpha})\in\calI_1$ and
\begin{equation}\lb{A.27}
\tr (e^{zB}-e^{zA_\alpha}) = z \int_{\bbR} e^{z\lambda} \xi_\alpha(\lambda)
\, d \lambda.
\end{equation}
This leads to \eqref{A.17} by using \eqref{A.4}.
\end{proof}

In extending this, the following uniqueness result will be useful:

\begin{proposition}\lb{PA.3} Suppose $A$ and $B$ are bounded self-adjoint operators
and $(A-B)\in\calI_1$. Suppose $\xi_j\in L^1(\bbR;d\lambda)$ for $j=1,2$, and for all
$f\in C_0^\infty (\bbR)$,
\begin{equation}\lb{A.28}
\tr(f(A)-f(B)) =\int_{\bbR} f' (\lambda)\xi_j(\lambda)\, d\lambda.
\end{equation}
Then $\xi_1=\xi_2$. Moreover, if $(a,b)\subset\bbR\backslash \sigma(A)\cup\sigma(B)$,
$\xi_j(\cdot)$ is an integer on $(a,b)$, and if $a=-\infty$ or $b=\infty$, it is zero 
on $(a,b)$, and so $\xi_j$ has compact support. 
\end{proposition}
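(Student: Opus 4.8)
The plan is to split the claim into (i) uniqueness, (ii) constancy of $\xi$ on each spectral gap, (iii) integrality of that constant, and (iv) vanishing on the two unbounded gaps, which gives compact support. Parts (i), (ii), (iv) are soft, while (iii) carries the real content.

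For (i), set $g=\xi_1-\xi_2\in L^1(\bbR;d\lambda)$; by \eqref{A.28}, $\int_\bbR g(\lambda)f'(\lambda)\,d\lambda=0$ for all $f\in C_0^\infty(\bbR)$. A function $\phi\in C_0^\infty(\bbR)$ equals $f'$ for some $f\in C_0^\infty(\bbR)$ iff $\int_\bbR\phi\,d\lambda=0$, so this says the distributional derivative of $g$ vanishes; hence $g$ is a.e.\ constant, and since $g\in L^1(\bbR)$ the constant is $0$. Thus $\xi_1=\xi_2$, and I write $\xi$ for the common function. For (ii), fix a gap $(a,b)$ with $(a,b)\cap(\sigma(A)\cup\sigma(B))=\emptyset$. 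For $f\in C_0^\infty((a,b))$ the spectral theorem gives $f(A)=f(B)=0$, so the left side of \eqref{A.28} vanishes and $\int_a^b f'\xi\,d\lambda=0$; the argument of (i), localized to $(a,b)$, shows $\xi\equiv c$ a.e.\ on $(a,b)$ for some constant $c$.

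The heart of the proof is (iii). I would introduce the perturbation determinant $D(z)=\det((A-z)(B-z)^{-1})$, which is well defined and analytic on $\bbC\setminus\sigma(B)$ (since $(A-B)\in\calI_1$ forces $(A-z)(B-z)^{-1}-I\in\calI_1$) and vanishes exactly on $\sigma(A)$; in particular $D\neq0$ on $\bbC_+$. Using the standard logarithmic-derivative formula $\frac{d}{dz}\log D(z)=\tr((B-z)^{-1}-(A-z)^{-1})$ (the trace-class version of \eqref{A.10}) together with \eqref{A.28} applied to smooth functions agreeing with $(\dott-z)^{-1}$ near $\sigma(A)\cup\sigma(B)$ and a routine cutoff passage to the limit (using $\xi\in L^1$), I get $\frac{d}{dz}\log D(z)=\int_\bbR(\lambda-z)^{-2}\xi(\lambda)\,d\lambda$. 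Matching the normalization $\log D(iy)\to0$ as $y\uparrow\infty$ then yields Krein's formula $\log D(z)=\int_\bbR(\lambda-z)^{-1}\xi(\lambda)\,d\lambda$ on $\bbC_+$, so by Fatou's theorem for the Poisson integral $\xi(\lambda)=\frac1\pi\lim_{\veps\downarrow0}\Ima\log D(\lambda+i\veps)$ for a.e.\ $\lambda$. On the gap $(a,b)$, $D$ is analytic and nonvanishing on a full complex neighborhood and, by the reflection symmetry $\overline{D(\bar z)}=D(z)$, real on $(a,b)$; hence the branch of $\log D$ continued from $\bbC_+$ extends analytically across $(a,b)$, and on $(a,b)$ its imaginary part $\Ima\log D$ lies in $\pi\bbZ$. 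A continuous $\pi\bbZ$-valued function on the interval $(a,b)$ is constant, so $c=\frac1\pi\Ima\log D(\lambda+i0)\in\bbZ$.

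Finally, for (iv): if $a=-\infty$ or $b=\infty$, then $\xi$ is a.e.\ constant on a set of infinite Lebesgue measure, so integrability forces that constant to be $0$. Since $A,B$ are bounded, $\sigma(A)\cup\sigma(B)$ is compact, and $\xi$ therefore vanishes on $(-\infty,\min(\sigma(A)\cup\sigma(B)))$ and on $(\max(\sigma(A)\cup\sigma(B)),\infty)$, so $\xi$ has compact support. I expect the main obstacle to be step (iii): one must justify promoting the $C_0^\infty$ trace formula to Krein's determinant formula (the resolvent is not compactly supported) and then verify that the branch of $\log D$ inherited from $\bbC_+$ really has imaginary part in $\pi\bbZ$ on the gap; the remaining steps are the standard ``distribution with zero derivative is constant'' argument combined with $\xi\in L^1$.
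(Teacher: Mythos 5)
Your proposal is correct, and parts (i), (ii), (iv) — the ``distribution with zero derivative is constant'' argument plus $L^1$-integrability — coincide with the paper's proof. The difference is in the integrality step (iii), where you take a genuinely different route. The paper tests \eqref{A.28} against a single smooth function $f$ that equals $1$ on the part of $\sigma(A)\cup\sigma(B)$ lying below the gap $(a,b)$ and $0$ above it; then $f(A)$ and $f(B)$ are spectral projections, the right-hand side of \eqref{A.28} is the negative of the constant value of $\xi_j$ on the gap (using that $\xi_j$ vanishes to the left of the spectrum), and the left-hand side is the trace of a trace class difference of projections, which is an integer by the index-theoretic result of Avron--Seiler--Simon and Effros. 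You instead promote \eqref{A.28} from $C_0^\infty$ to resolvents by a cutoff argument (which indeed needs only $\xi\in L^1$ and dominated convergence, so there is no circularity with your step (iv)), integrate the trace class logarithmic-derivative formula to recover Krein's formula \eqref{A.31} from the hypothesis, and then use reflection symmetry and nonvanishing analyticity of $\det((A-z)(B-z)^{-1})$ across the gap to conclude $\Ima \log D\in\pi\bbZ$ there. Each approach leans on one external classical fact: the paper on the integer-valuedness of $\tr(P-Q)$ for trace class differences of projections, you on the trace class identity $\f{d}{dz}\log\det((A-z)(B-z)^{-1})=\tr((B-z)^{-1}-(A-z)^{-1})$ (only the rank-one case \eqref{A.10} is proved in the paper) together with Fatou's theorem. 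The paper's proof is shorter and avoids determinants entirely; yours is longer but yields a genuine bonus, namely that Krein's determinant formula \eqref{A.31} is itself a consequence of the trace formula \eqref{A.28} alone — reversing the paper's logic, which derives \eqref{A.28} from \eqref{A.31} via the rank-one construction.
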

\begin{proof} By \eqref{A.28}, the distribution $\xi_1-\xi_2$ has vanishing
distributional derivative, so is constant. Since it lies in $L^1(\bbR;d\lambda)$, 
it must be zero.

If $f\in C_0^\infty((a,b))$, $f(A)=f(B)=0$, so $\xi'_j$ has zero derivative on
$(a,b)$ and so is constant. If $a=-\infty$ or $b=\infty$, the constant must be zero
since $\xi_j\in L^1(\bbR;d\lambda)$. Now pick $f$ which is supported on $(c,(a+b)/2)$
for some $c<d<\min(\sigma(A)\cup \, \sigma(B))$ with $f=1$ on $(d,(3 a + b)/4)$.
Thus, the right-hand side of \eqref{A.28} is the negative of the constant value of $\xi_j$
on $(a,b)$, while the left-hand side is the trace of a trace class difference of projections
which is always an integer (see \cite{AvSS, Eff}).
\end{proof}

\begin{theorem}\lb{TA.4} For any pair of bounded self-adjoint operators $A,B$ with
$(A-B)$ of finite rank, there exists a function, $\xi(\dott;A,B)$ such that the following hold:
\begin{SL}
\item[{\rm{(i)}}] \eqref{A.17} holds for any $f$ satisfying the hypotheses of
Theorem~\ref{TA.1}.

\item[{\rm{(ii)}}]
\begin{equation}\lb{A.29}
\abs{\xi(\dott;A,B)}\leq\rank(A-B).
\end{equation}

\item[{\rm{(iii)}}]
\begin{equation}\lb{A.30}
\int_{\bbR} \abs{\xi (\lambda;A,B)}\, d\lambda \leq \|A-B\|_{\calI_1}.
\end{equation}

\item[{\rm{(iv)}}] For $z\in\bbC \backslash \bbR$, one has
\begin{equation}\lb{A.31}
\det((A-z)(B-z)^{-1}) =\exp\bigg(\int_{\bbR} (\lambda-z)^{-1}
\xi(\lambda) \, d\lambda\bigg).
\end{equation}

\item[{\rm{(v)}}] $\xi(\lambda)=0$ for $\lambda\leq\min(\sigma(A)\cup\sigma(B))$ or
$\lambda\geq \max(\sigma(A)\cup\sigma(B))$.

\item[{\rm{(vi)}}] If $(A-B)$ and $(B-C)$ are both finite rank,
\begin{equation}\lb{A.32}
\xi (\dott;A,C)= \xi(\dott;A,B) + \xi (\dott;B,C).
\end{equation}
\end{SL}
\end{theorem}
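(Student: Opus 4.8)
The plan is to reduce the finite-rank case to the rank-one case of Theorem~\ref{TA.2} by splitting $A-B$ into rank-one pieces and telescoping. Since $(A-B)$ is self-adjoint of finite rank $N=\rank(A-B)$, its spectral decomposition reads $A-B=\sum_{j=1}^N \alpha_j(\varphi_j,\dott)\varphi_j$ with $\alpha_j\in\bbR\setminus\{0\}$ and $\{\varphi_j\}_{j=1}^N$ orthonormal. I would set $B_0=B$ and $B_k=B+\sum_{j=1}^k \alpha_j(\varphi_j,\dott)\varphi_j$ for $1\le k\le N$, so that $B_N=A$ and each consecutive difference $B_k-B_{k-1}=\alpha_k(\varphi_k,\dott)\varphi_k$ is rank one and $B_k$ is bounded self-adjoint. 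Theorem~\ref{TA.2} then supplies a rank-one KrSSF $\xi(\dott;B_k,B_{k-1})$ for each step, and I would define
\[
\xi(\dott;A,B)=\sum_{k=1}^N \xi(\dott;B_k,B_{k-1}).
\]

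First I would establish (i) and (iii), which together render the definition canonical. Property (i) follows by summing the rank-one trace formula \eqref{A.17} over $k$ and telescoping $\sum_{k=1}^N\tr(f(B_k)-f(B_{k-1}))=\tr(f(A)-f(B))$. For (iii), the triangle inequality and \eqref{A.13} give $\int_\bbR|\xi(\lambda;A,B)|\,d\lambda\le\sum_{k=1}^N\int_\bbR|\xi(\lambda;B_k,B_{k-1})|\,d\lambda=\sum_{k=1}^N|\alpha_k|=\|A-B\|_{\calI_1}$, the last equality because the singular values of the self-adjoint finite-rank operator $A-B$ are exactly $\{|\alpha_j|\}$. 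In particular $\xi(\dott;A,B)\in L^1(\bbR;d\lambda)$, so Proposition~\ref{PA.3} applies: the $L^1$ function satisfying \eqref{A.28} is unique, whence $\xi(\dott;A,B)$ is independent of the chosen decomposition of $A-B$ and of the ordering of the $\varphi_j$. The same proposition, through its statement that $\xi$ vanishes on unbounded complementary intervals, immediately yields (v) and the compact support of $\xi$.

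The remaining properties are short. For (ii), Theorem~\ref{TA.2}(i) gives $|\xi(\dott;B_k,B_{k-1})|\le 1$ a.e.\ for each step, so summing $N$ terms yields $|\xi(\dott;A,B)|\le N=\rank(A-B)$. For (iv), I would use the factorization $(A-z)(B-z)^{-1}=\prod_{k=1}^N (B_k-z)(B_{k-1}-z)^{-1}$, observe that each factor lies in $I+\calI_1$ since $(B_k-z)(B_{k-1}-z)^{-1}-I=(B_k-B_{k-1})(B_{k-1}-z)^{-1}$ is rank one, and invoke multiplicativity of the Fredholm determinant together with the rank-one identities \eqref{A.15} and \eqref{A.14}; the product of exponentials collapses to $\exp(\int_\bbR(\lambda-z)^{-1}\xi(\lambda)\,d\lambda)$. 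Finally, (vi) is fastest from uniqueness: when $(A-B)$ and $(B-C)$ are finite rank, $\int_\bbR f'(\lambda)[\xi(\lambda;A,B)+\xi(\lambda;B,C)]\,d\lambda=\tr(f(A)-f(B))+\tr(f(B)-f(C))=\tr(f(A)-f(C))$, and since both $\xi(\dott;A,C)$ and $\xi(\dott;A,B)+\xi(\dott;B,C)$ lie in $L^1$, Proposition~\ref{PA.3} forces \eqref{A.32}.

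The one genuinely delicate point will be well-definedness, namely that the telescoped sum does not depend on how $A-B$ is split into rank-one summands. Rather than verify this directly—which would force me to track how the intermediate spectra, and hence the intermediate supports, move as the decomposition varies—I would let Proposition~\ref{PA.3} carry the argument: once (i) and (iii) are in hand, its $L^1$ uniqueness statement makes the construction canonical and simultaneously delivers the support property (v), which a hands-on telescoping argument would struggle with, since the partial-sum operators $B_k$ can have spectrum lying outside $\sigma(A)\cup\sigma(B)$.
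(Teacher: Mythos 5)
Your proposal is correct and follows essentially the same route as the paper: split $A-B$ into rank-one spectral pieces, define $\xi(\dott;A,B)$ as the telescoping sum of rank-one KrSSFs from Theorem~\ref{TA.2}, get (i)--(iii) from \eqref{A.12}, \eqref{A.13} and the fact that the trace norms of the steps sum to $\|A-B\|_{\calI_1}$, get (iv) from multiplicativity of the Fredholm determinant, and get (v), (vi) from Proposition~\ref{PA.3}. Your explicit treatment of well-definedness via the $L^1$ uniqueness of Proposition~\ref{PA.3} is a point the paper leaves implicit, but it is the same mechanism the paper invokes for (v) and (vi).
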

\begin{proof} If $(A-B)$ has rank $n$, we can find $A_0=A,A_1, \dots, A_n=B$ so
$(A_{j+1}-A_j)$ has rank one, and
\begin{equation}\lb{A.33}
\sum_{j=0}^{n-1}\, \|A_{j+1}-A_j\|_{\calI_1} = \|B-A\|_{\calI_1}.
\end{equation}
We define
\begin{equation}\lb{A.34}
\xi(\dott;A,B)=\sum_{j=0}^{n-1} \xi(\dott;A_j, A_{j+1}),
\end{equation}
where $\xi (\dott;A_j, A_{j+1})$ is constructed via Theorem~\ref{TA.2}. \eqref{A.17}
holds by telescoping and the rank-one case. \eqref{A.29} and \eqref{A.30} follow from
\eqref{A.12}, \eqref{A.13}, and \eqref{A.33}.

\eqref{A.31} follows from
\[
(A-z)(B-z)^{-1} = [(A_0-z)(A_1-z)^{-1}] [(A_1-z)(A_2-z)^{-1}] \dots
\]
using
\[
\det((1+X_1)(1+X_2)) =\det (1+X_1)\det (1+X_2)
\]
for $X_1, X_2\in\calI_1$.

Item (v) is proven in Proposition~\ref{PA.3}. Item (vi) follows from the uniqueness in
Proposition~\ref{PA.3}.
\end{proof}

Theorem \ref{TA.4} is essentially the same as Theorem~3 in \cite{Kr53} (see also
\cite{Kr83} and \cite{BY93}).

\begin{corollary}\lb{CA.5} If $A,A'$ are both finite rank perturbations of $B$ with all
three operators self-adjoint, we have
\begin{equation}\lb{A.35}
\int_{\bbR} \abs{\xi(\lambda;A,B)-\xi(\lambda;A',B)}\, d\lambda \leq \|A-A'\|_{\calI_1}.
\end{equation}
\end{corollary}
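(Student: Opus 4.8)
The plan is to deduce \eqref{A.35} from the chain rule \eqref{A.32} together with the $L^1$-estimate \eqref{A.30}, both already established in Theorem~\ref{TA.4}. First I would note that the hypotheses are set up so that all three pairwise differences among $A$, $A'$, $B$ are finite rank: $(A-B)$ and $(A'-B)$ are finite rank by assumption, and hence $(A-A')=(A-B)-(A'-B)$ is finite rank as well. Consequently $\xi(\dott;A,B)$, $\xi(\dott;A',B)$, and $\xi(\dott;A,A')$ are all well defined via Theorem~\ref{TA.4}, and in particular lie in $L^1(\bbR;d\lambda)$ with compact support.

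Next I would apply the additivity \eqref{A.32} to the triple $(A,A',B)$, which gives
\[
\xi(\dott;A,B)=\xi(\dott;A,A')+\xi(\dott;A',B),
\]
and therefore
\[
\xi(\dott;A,B)-\xi(\dott;A',B)=\xi(\dott;A,A').
\]
(Equivalently, one first specializes \eqref{A.32} to $C=A$ to get $\xi(\dott;A,A)=0$, hence $\xi(\dott;A',B)=-\xi(\dott;B,A')$, and then rearranges; the content is the same.) Integrating the absolute value of this identity and invoking \eqref{A.30} applied to the pair $(A,A')$ yields
\[
\int_{\bbR}\abs{\xi(\lambda;A,B)-\xi(\lambda;A',B)}\,d\lambda
=\int_{\bbR}\abs{\xi(\lambda;A,A')}\,d\lambda
\leq\|A-A'\|_{\calI_1},
\]
which is precisely \eqref{A.35}.

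I do not expect any genuine obstacle here: the corollary is a formal consequence of the chain rule and the $\calI_1$-bound, and the only verification needed is that $(A-A')$ is of finite rank so that Theorem~\ref{TA.4} is applicable to the pair $(A,A')$, which is immediate. If one wished to make the argument self-contained one could instead re-run the telescoping construction of \eqref{A.34} simultaneously through a common refinement of rank-one chains joining $B$ to $A$ and $B$ to $A'$, but routing through \eqref{A.32} is cleaner.
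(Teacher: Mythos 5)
Your argument is correct and is essentially identical to the paper's own proof: both derive $\xi(\dott;A,B)-\xi(\dott;A',B)=\xi(\dott;A,A')$ from the additivity \eqref{A.32} and then apply the bound \eqref{A.30} to the pair $(A,A')$. The only addition is your explicit check that $(A-A')$ is finite rank, which the paper leaves implicit.
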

\begin{proof} By \eqref{A.32},
\[
\xi(\dott;A,B)- \xi (\dott;A',B)=\xi(\dott; A,A').
\]
Thus, \eqref{A.35} follows from \eqref{A.30}.
\end{proof}

This yields the principal result on existence and properties of the KrSSF
(see \cite{Kr53} or \cite{Kr83}).

\begin{theorem}\lb{TA.5} Let $A,B$ be bounded self-adjoint operators with $(A-B) \in\calI_1$.
Then,
\begin{SL}
\item[{\rm{(i)}}] There exists a unique function $\xi(\dott;A,B)\in L^1 (\bbR;d\lambda)$
such that \eqref{A.17} holds for any $f$ satisfying the hypotheses of Theorem~\ref{TA.1}.

\item[{\rm{(ii)}}]
\begin{equation}\lb{A.36}
\int_{\bbR} \abs{\xi (\lambda;A,B)}\, d\lambda \leq \|A-B\|_{\calI_1}.
\end{equation}

\item[{\rm{(iii)}}] \eqref{A.31} holds.

\item[{\rm{(iv)}}] $\xi(\lambda)=0$ if $\lambda\leq\min(\sigma(A)\cup\sigma(B))$ or
$\lambda \geq \max(\sigma(A)\cup\sigma(B))$.

\item[{\rm{(v)}}] If $(A-B)$ and $(B-C)$ are both trace class, \eqref{A.32} holds.

\item[{\rm{(vi)}}] If $(A-B)$ and $(A'-B)$ are trace class, \eqref{A.35} holds.
\end{SL}
\end{theorem}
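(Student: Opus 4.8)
The plan is to deduce Theorem~\ref{TA.5} from the finite-rank case (Theorem~\ref{TA.4}) by an $L^1$-approximation argument, with the uniqueness statement of Proposition~\ref{PA.3} doing all the bookkeeping. Write $X=A-B\in\calI_1$, fix $R\geq\|B\|$, and choose self-adjoint finite-rank operators $X_n$ with $\|X_n-X\|_{\calI_1}\to 0$ and (after a harmless truncation) $\|X_n\|\leq 2R$; put $A_n=B+X_n$, so the $A_n$ are uniformly bounded and $A_n\to A$ in $\calI_1$-norm. By Corollary~\ref{CA.5} applied to the pair $A_n,A_m$, both finite-rank perturbations of $B$,
\[
\int_\bbR \abs{\xi(\lambda;A_n,B)-\xi(\lambda;A_m,B)}\,d\lambda \leq \|A_n-A_m\|_{\calI_1} \underset{n,m\to\infty}{\longrightarrow} 0,
\]
so $\{\xi(\dott;A_n,B)\}_{n\in\bbN}$ is Cauchy in $L^1(\bbR;d\lambda)$; I define $\xi(\dott;A,B)$ to be its limit. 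Since each $\xi(\dott;A_n,B)$ is supported in the fixed bounded interval $[-3R,3R]$ by Theorem~\ref{TA.4}(v), so is the limit.

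Next I would pass each property to the limit. For the trace formula \eqref{A.17} with $f$ as in Theorem~\ref{TA.1}: Theorem~\ref{TA.1}(b) with $B_n\equiv B$ and the above $X_n$ gives $\tr(f(A_n)-f(B))\to\tr(f(A)-f(B))$, while $\int_\bbR f'\,\xi(\dott;A_n,B)\,d\lambda\to\int_\bbR f'\,\xi(\dott;A,B)\,d\lambda$ because $f'$ is bounded and $\xi(\dott;A_n,B)\to\xi(\dott;A,B)$ in $L^1$; hence \eqref{A.17} holds for $(A,B)$ and all such $f$, in particular for all $f\in C_0^\infty(\bbR)$. Uniqueness of $\xi(\dott;A,B)$ in $L^1(\bbR;d\lambda)$ is then exactly Proposition~\ref{PA.3}, which also yields part (iv) (and the integer-valuedness on spectral gaps); this proves (i) and (iv). Estimate (ii) follows by passing \eqref{A.30} to the limit, using $\|\xi(\dott;A_n,B)\|_1\leq\|X_n\|_{\calI_1}\to\|X\|_{\calI_1}$.

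For (iii), observe $(A_n-z)(B-z)^{-1}=I+X_n(B-z)^{-1}$ and $X_n(B-z)^{-1}\to X(B-z)^{-1}$ in $\calI_1$ for fixed $z\in\bbC\backslash\bbR$, so by continuity of the Fredholm determinant on $I+\calI_1$ (see \cite[Ch.~3]{STI}) the left-hand side of \eqref{A.31} for $A_n$ converges to that for $A$; the right-hand side converges because $\xi(\dott;A_n,B)\to\xi(\dott;A,B)$ in $L^1$ with common support in $[-3R,3R]$, on which $(\dott-z)^{-1}$ is bounded. This gives \eqref{A.31}. For the chain rule (v) I would not re-run the approximation but use uniqueness: if $(A-B),(B-C)\in\calI_1$ then $(A-C)\in\calI_1$, and for every $f\in C_0^\infty(\bbR)$ additivity of the trace gives $\tr(f(A)-f(C))=\int_\bbR f'\,[\xi(\dott;A,B)+\xi(\dott;B,C)]\,d\lambda$, so Proposition~\ref{PA.3} applied to $(A,C)$ forces \eqref{A.32}. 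Then (vi) is immediate: by \eqref{A.32}, $\xi(\dott;A,B)-\xi(\dott;A',B)=\xi(\dott;A,A')$, and \eqref{A.35} follows from \eqref{A.36} applied to $(A,A')$.

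The only genuinely delicate point is the Cauchy estimate that launches the construction: it rests on Corollary~\ref{CA.5}, hence ultimately on the rank-one identities \eqref{A.13} and \eqref{A.32} of Theorem~\ref{TA.4}; everything afterwards is a routine passage to the limit, the one mild technicality being to keep the supports of the approximants inside a fixed bounded interval so that \eqref{A.31} survives the limit. This is, of course, the classical argument of Krein \cite{Kr53}, \cite{Kr83}.
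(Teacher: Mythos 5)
Your proposal is correct and follows essentially the same route as the paper: approximate $A-B$ in $\calI_1$ by finite-rank operators, use Corollary~\ref{CA.5} to get a Cauchy sequence in $L^1(\bbR;d\lambda)$, pass each property of Theorem~\ref{TA.4} to the limit, and invoke Proposition~\ref{PA.3} for uniqueness and item (iv). Your derivation of (v) from uniqueness rather than from a limiting argument is a minor (and equally valid) variation on the paper's one-line treatment.
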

\begin{proof} Find $A_n$ so $(A_n -B) \underset{n\to\infty}{\longrightarrow} (A-B)$ in
$\calI_1$ and $(A_n-B)$ is finite rank. By \eqref{A.35}, $\xi(\dott;A_n,B)$ is Cauchy in
$L^1(\bbR)$ so converges to an $L^1(\bbR)$ function by \eqref{A.35}. Thus, items (i), (ii),
(iii), (v), and (vi) hold by taking limits (using $\|\cdot\|_{\calI_1}$-continuity of the
mapping $C\to\det(I+C$). Uniqueness and (iv) follow from Proposition~\ref{PA.3}.
\end{proof}

We refer to \cite{Pe85} (see also \cite{Pe90}) for a description of a class of functions $f$
for which this theorem holds.

We note that there are interesting extensions of the trace formula \eqref{A.17} to classes of 
operators $A, B$ different from self-adjoint or unitary operators. While we cannot possibly 
list all such extensions here, we refer, for instance, to Adamjan and Neidhardt 
\cite{AN90}, Adamjan and Pavlov \cite{AP80}, Jonas \cite{Jo88}, \cite{Jo99}, 
Krein \cite{Kr89},  Langer \cite{La65}, Neidhardt \cite{Ne87}, \cite{Ne88a},
Rybkin \cite{Ry96}, Sakhnovich \cite{Sa68}, and the literature cited therein.

\begin{theorem}\lb{TA.6} Let $B_n,B$, $n\in\bbN$, be uniformly bounded self-adjoint
operators such that $B_n\underset{n\to\infty}{\longrightarrow}B$ strongly.
Let $X_n,X$, $n\in\bbN$, be a sequence of self-adjoint trace class operators
such that $\|X-X_n\|_{\calI_1}\underset{n\to\infty}{\longrightarrow} 0$. Then for any
continuous function, $g$,
\begin{equation}\lb{A.37a}
\int_{\bbR} g(\lambda)\xi(\lambda;B_n+X_n,B_n)\, d\lambda
\underset{n\to\infty}{\longrightarrow}  \int_{\bbR} g(\lambda) \xi (\lambda; B+X,B)\, d\lambda.
\end{equation}
\end{theorem}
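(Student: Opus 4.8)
The plan is to reduce the claim to what we already know about convergence of traces, together with an integration-by-parts identity that converts $\int g\,\xi\,d\lambda$ into a trace of an operator difference. First I would use the fact (Theorem~\ref{TA.5}(iv)) that each $\xi(\dott;B_n+X_n,B_n)$ is supported in a fixed compact interval $[-R,R]$, where $R$ can be chosen uniformly because the $B_n$ are uniformly bounded and $\|X_n\|_{\calI_1}$ is bounded (as $X_n\to X$ in $\calI_1$). Hence it suffices to prove \eqref{A.37a} for $g\in C^\infty(\bbR)$: if $g$ is merely continuous, approximate it uniformly on $[-R,R]$ by smooth functions and use the uniform bound $\int|\xi(\lambda;B_n+X_n,B_n)|\,d\lambda\le\|X_n\|_{\calI_1}\le C$ from \eqref{A.36}.

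For smooth $g$, write $g(\lambda)=g(-R)+\int_{-R}^\lambda g'(\mu)\,d\mu$; since $\xi$ integrates to $\tr(X_n)$ and is supported in $[-R,R]$, the constant term contributes $g(-R)\tr(X_n)\to g(-R)\tr(X)$ by $\calI_1$-convergence. For the remaining piece, Fubini gives
\[
\int_{\bbR} g(\lambda)\,\xi(\lambda;B_n+X_n,B_n)\,d\lambda
= g(-R)\tr(X_n) + \int_{\bbR} g'(\mu)\Big(\int_\mu^\infty \xi(\lambda;B_n+X_n,B_n)\,d\lambda\Big)d\mu,
\]
and one can instead integrate by parts the other way to land on the trace formula: choosing any $f\in C^\infty(\bbR)$ with $f'=g$ (which exists on a neighborhood of the common support), Theorem~\ref{TA.5}(i), i.e.\ \eqref{A.17}, gives
\[
\int_{\bbR} g(\lambda)\,\xi(\lambda;B_n+X_n,B_n)\,d\lambda = \tr\big(f(B_n+X_n)-f(B_n)\big).
\]
Now Theorem~\ref{TA.1}(b) applies directly: $B_n\to B$ strongly, $B_n$ uniformly bounded, and $X_n\to X$ in $\calI_1$, so the right-hand side converges to $\tr(f(B+X)-f(B))=\int_{\bbR} g(\lambda)\,\xi(\lambda;B+X,B)\,d\lambda$, using \eqref{A.17} once more for the pair $(B+X,B)$.

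The one point requiring care — and I expect it to be the main technical obstacle — is the passage from $C^\infty$ test functions back to arbitrary continuous $g$. The subtlety is that $f$ with $f'=g$ need not satisfy the Fourier hypothesis of Theorem~\ref{TA.1} globally, so one must cut off: fix $\chi\in C_0^\infty(\bbR)$ with $\chi\equiv 1$ on $[-R-1,R+1]$, replace $g$ by $\chi g$ (which does not change either integral since all the $\xi$'s are supported in $[-R,R]$), approximate $\chi g$ uniformly by smooth compactly supported functions $g_k$, note $\int|g_k-\chi g|\,d\mu$ times the uniform bound $C$ on $\|\xi\|_1$ controls the error uniformly in $n$, and pass to the limit in $k$ after passing to the limit in $n$. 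The double limit is legitimate because of the uniform-in-$n$ estimate $\big|\int (g_k-g)\xi\big|\le\|g_k-g\|_{\infty,[-R,R]}\,C$, which is exactly the content of \eqref{A.36}. Everything else is a routine assembly of the cited results.
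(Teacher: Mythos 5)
Your proposal is correct and follows essentially the same route as the paper: establish \eqref{A.37a} for smooth $g$ by choosing an antiderivative $f$ with $f'=g$, applying the trace formula \eqref{A.17} and the convergence statement of Theorem~\ref{TA.1}(b), and then extending to general continuous $g$ via the uniform $L^1$-bound \eqref{A.36} together with the uniform compact support of the $\xi$'s. The paper's proof is terser but relies on exactly the same ingredients; your explicit attention to the cutoff needed for $f$ to satisfy the Fourier hypothesis of Theorem~\ref{TA.1} is a reasonable elaboration of a point the paper leaves implicit.
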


\begin{proof} By Theorem~\ref{TA.1}, we have \eqref{A.37a} for $g\in C_0^\infty (\bbR)$. 
Note that the ${\|\cdot\|_\infty}$-norm closure of $C_0^\infty$ includes the continuous 
functions of compact support. Thus, by an approximation argument using uniform $L^1(\bbR; d\lambda)
$-bounds on $\xi$, we get \eqref{A.37a} for continuous functions of compact support.
Since $\xi (\lambda;A,B)=0$ for $\lambda\in [-\max (\|A\|,\|B\|), \max(\|A\|,\|B\|)]$, 
the result for continuous functions of compact support extends to any continuous function.
\end{proof}

We want to note the following. Define
\[
\xi(\calI_1) =\{\xi (\dott; A,B) \, | \, A, B \text{ bounded and self-adjoint,} \, (A-B)\in\calI_1\}.
\]

\begin{proposition}\lb{PA.7A} $\xi(\calI_1)$ is the set of $L^1(\bbR;d\lambda)$-elements of compact support.
\end{proposition}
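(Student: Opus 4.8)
The plan is to prove the two inclusions separately, with all the content in the reverse one. The inclusion $\xi(\calI_1)\subseteq\{$compactly supported $L^1$ functions$\}$ is immediate from Theorem~\ref{TA.5}: for bounded self-adjoint $A,B$ with $(A-B)\in\calI_1$, estimate \eqref{A.36} gives $\xi(\dott;A,B)\in L^1(\bbR;d\lambda)$, while item (iv) shows it vanishes outside the bounded interval $[\min(\sigma(A)\cup\sigma(B)),\max(\sigma(A)\cup\sigma(B))]$. For the reverse inclusion I would first make two reductions. Writing $g=g_+-g_-$ with $g_\pm\geq0$ the positive and negative parts, it suffices to realize nonnegative functions: if $g_+=\xi(\dott;A_+,B_+)$ and $g_-=\xi(\dott;A_-,B_-)$, then $A=A_+\oplus B_-$, $B=B_+\oplus A_-$ satisfy $(A-B)\in\calI_1$, and additivity of the trace formula \eqref{A.17} over direct sums, together with antisymmetry $\xi(\dott;B_-,A_-)=-\xi(\dott;A_-,B_-)$ (a special case of the chain rule \eqref{A.32}), gives $\xi(\dott;A,B)=g_+-g_-=g$. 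Second, a layer-cake decomposition $g=\sum_{k\geq1}\Xi_k$ with $\Xi_k(\lambda)=\min(\max(g(\lambda)-(k-1),0),1)$ writes any $g\geq0$ as a sum of $[0,1]$-valued functions $\Xi_k$, supported in a common compact interval, with $\sum_k\int\Xi_k\,d\lambda=\int g\,d\lambda<\infty$. So it suffices to realize each such layer and take a direct sum.

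The heart of the matter is realizing a measurable $\Xi$ with $0\leq\Xi\leq1$ and compact support as a rank-one KrSSF. Set $\alpha=\int_\bbR\Xi(\lambda)\,d\lambda$ (if $\alpha=0$ then $\Xi=0$ a.e. and we take $A=B$). Guided by the exponential Herglotz representation \eqref{A.14} (Aronszajn--Donoghue; Remark~4 after Theorem~\ref{TA.2}), I would define
\[
G(z)=\exp\!\Big(\int_\bbR(\lambda-z)^{-1}\Xi(\lambda)\,d\lambda\Big),\qquad z\in\bbC_+ .
\]
Since $0\leq\Xi\leq1$ has compact support, $\arg G(z)=\int_\bbR\Ima(\lambda-z)^{-1}\,\Xi(\lambda)\,d\lambda$ lies strictly between $0$ and $\pi$, so $G$ is Herglotz with $G(iy)\to1$ and $\tfrac1\pi\arg G(\lambda+i0)=\Xi(\lambda)$ a.e. The crucial step is that $F_0:=(G-1)/\alpha$ is the Cauchy transform of a probability measure: $\Ima(G-1)=\Ima G>0$ shows $G-1$, hence $F_0$, is Herglotz; the expansion $\log G(z)=-\alpha z^{-1}+O(z^{-2})$ yields $F_0(z)=-z^{-1}+O(z^{-2})$ at infinity, ruling out constant and linear terms and forcing the representing measure $\mu_0$ to be positive of total mass $1$; finally $G$, and so $F_0$, extends analytically across $\bbR\setminus\supp\Xi$, so $\mu_0$ is supported in the compact set $\supp\Xi$. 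Thus $F_0(z)=\int_\bbR(\lambda-z)^{-1}\,d\mu_0(\lambda)$. Taking $B$ to be multiplication by $\lambda$ on $L^2(\bbR;d\mu_0)$ with cyclic unit vector $\varphi\equiv1$ and $A_\alpha=B+\alpha(\varphi,\dott)\varphi$ as in \eqref{A.5}, the functions $F_\alpha,G_\alpha$ of \eqref{A.6}--\eqref{A.7} coincide with $F_0,G$, whence $\xi(\dott;A_\alpha,B)=\tfrac1\pi\arg G(\dott+i0)=\Xi$ by \eqref{A.11}. Note that $\sigma(A_\alpha)\subseteq\supp\Xi$ automatically, as a real zero of $G$ off $\supp\Xi$ would force a jump of $\arg G$ there, contradicting $\Xi=0$ off its support.

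To assemble, realize each layer $\Xi_k$ as $\xi(\dott;A_k,B_k)$ with $\sigma(A_k),\sigma(B_k)\subseteq\supp g$, so that $A=\oplus_kA_k$ and $B=\oplus_kB_k$ are uniformly bounded and $\|A-B\|_{\calI_1}=\sum_k\|A_k-B_k\|_{\calI_1}=\sum_k\int\Xi_k\,d\lambda=\int g\,d\lambda<\infty$, giving $(A-B)\in\calI_1$. Additivity of \eqref{A.17} over this countable direct sum, justified by the $L^1$-convergence of $\sum_k\Xi_k$, yields $\xi(\dott;A,B)=\sum_k\Xi_k=g$; combined with the first reduction this realizes any compactly supported $g\in L^1(\bbR;d\lambda)$.

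The main obstacle, and the step I would write out most carefully, is the middle paragraph: proving that the function $F_0=(G-1)/\alpha$ delivered by the exponential representation really is the Cauchy transform $\int(\lambda-z)^{-1}\,d\mu_0$ of a positive measure of mass exactly $1$ supported in $\supp\Xi$ — i.e.\ that it is Herglotz with the correct $O(z^{-1})$ decay and no linear or constant part — so that a genuine rank-one model $(A_\alpha,B)$ exists with the prescribed $\xi$. Everything else (the two reductions, direct-sum additivity of the trace formula, and the uniform-boundedness bookkeeping) is routine once this realization lemma is established.
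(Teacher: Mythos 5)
Your proposal is correct and takes essentially the same route as the paper: the heart of both arguments is the exponential Herglotz representation, producing $G(z)=\exp\bigl(\int_{\bbR}(\lambda-z)^{-1}\Xi(\lambda)\,d\lambda\bigr)=1+\alpha F_0(z)$ with $F_0$ the Cauchy transform of a probability measure supported in $\supp\,(\Xi)$, which is then realized by a rank-one perturbation of multiplication by $\lambda$ on $L^2(\bbR;d\mu_0)$ exactly as in \eqref{A.37c}--\eqref{A.37b}. Your assembly step (positive/negative parts via a direct-sum swap, layer-cake decomposition into $[0,1]$-valued pieces, and countable direct sums) simply makes explicit the summation that the paper compresses into its closing density-of-simple-functions remark, and your normalization realizes $\Xi$ itself rather than the paper's $\pi^{-1}g$.
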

\begin{proof} Since $A, B$ are bounded and self-adjoint, any $\xi (\dott; A,B)  \in \xi(\calI_1)$
necessarily lies in $L^1(\bbR;d\lambda)$ and has compact support (cf.\ Theorem \ref{TA.5}\,(i) and (iv)).

Next, let $g\in L^1 (\bbR;d\lambda)$ satisfy $0\leq g(\lambda)\leq 1$ and
$\supp(g)\subset (a,b)$ for some $-\infty<a<b<\infty$. Define
\begin{equation} \lb{A.37c}
G(z) = \exp\biggl( \f{1}{\pi} \int_{a}^b \f{g(\lambda)\, d\lambda}{\lambda-z} \biggr),
\quad \Ima (z)>0.
\end{equation}
Then $G$ satisfies the following items (i)--(iii):
\begin{SL}
\item[(i)] $\Ima (G(z))>0$ (for $\Ima (z)>0$) since
\[
0\leq \Ima \biggl(\int_{a}^b \f{g(\lambda)\, d\lambda}{\lambda-z}\biggr)
\leq \Ima \bigg(\int_{a}^b \f{d\lambda}{\lambda-z}\bigg) \leq \pi
\]
on account of $0\leq g \leq 1$.

\item[(ii)] $\Ima (G(\lambda+i0))=0$ if $\lambda<a$ or $\lambda>b$.

\item[(iii)] $G(z)\to 1$ as $\Ima (z)\to \infty$ since $g\in L^1(\bbR;d\lambda)$. It follows that
there is $\alpha >0$ and a probability measure $d\mu$ on $[a,b]$ with
\begin{equation}\lb{A.37b}
G(z)= 1+\alpha \int_a^b \f{d\mu(\lambda)}{\lambda-z}.
\end{equation}
\end{SL}
Let $B$ be multiplication by $\lambda$ on $L^2 ((a,b);d\mu)$, $\varphi$ is the function $1$ in
$L^2 ((a,b);d\mu)$ and $A=B+\alpha (\varphi, \dott)\varphi$. Then, by \eqref{A.5},
\eqref{A.6},
\eqref{A.7}, and \eqref{A.14}, $\xi(\lambda;A,B)=\pi^{-1} g(\lambda)$ for a.e.\
$\lambda\in(a,b)$, and
$\alpha =\pi^{-1} \int_{a}^b g(\lambda)\, d\lambda$. Thus, we have the theorem if
$0\leq g \leq 1$ or (by interchanging $A$
and $B$) if $0\geq g \geq -1$. Since any $L^1(\bbR;d\lambda)$-function is a sum of such
$g$'s converging in $L^1(\bbR;d\lambda)$ (simple functions are dense in
$L^1(\bbR;d\lambda)$), we obtain the general result.
\end{proof}
We note that a similar result for the finite rank case can be found in \cite{KrYa}.

Finally, we prove invariance of the absolutely continuous spectrum under trace class
perturbations using the KrSSF and perturbation determinants, that is, without directly
relying on elements from scattering theory.

We start with the following observations:

\begin{lemma} \lb{lA.8}
Let $A,B$ be bounded self-adjoint operators with $X=(A-B)$ of rank one. Then,
$$
\sigma_{\rm ac}(A) = \sigma_{\rm ac}(B)
$$
and
$$
\xi(\lambda;A,B)\in\{-1, 0, 1\}
$$
for a.e.\ $\lambda\in\bbR\backslash \sigma_{\rm ac}(B)$.
\end{lemma}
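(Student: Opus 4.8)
The plan is to reduce everything to the rank-one Aronszajn--Donoghue formalism already set up in Theorem~\ref{TA.2}. Write $X=A-B=\alpha(\varphi,\dott)\varphi$ with $\|\varphi\|=1$ and $\alpha\in\bbR$; the case $\alpha=0$ is trivial, so assume $\alpha\neq 0$, and put $A=A_\alpha$. Both $B$ and $A_\alpha$ leave invariant the cyclic subspace $\calH_\varphi=\overline{\spann}\{(B-z)^{-1}\varphi : z\in\bbC\backslash\bbR\}$ (note $\varphi\in\calH_\varphi$, so the rank-one term maps into $\calH_\varphi$), and on $\calH_\varphi^\perp$ they coincide since $(\varphi,\dott)=0$ there. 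Moreover, applying \eqref{A.9} to $\varphi$ gives $(A_\alpha-z)^{-1}\varphi=(1+\alpha F_0(z))^{-1}(B-z)^{-1}\varphi$, so $\spann\{(A_\alpha-z)^{-1}\varphi\}\subseteq\calH_\varphi$, and by the symmetry $A_\alpha\leftrightarrow B$, $\alpha\leftrightarrow-\alpha$ the reverse inclusion holds; hence $\varphi$ is cyclic for $A_\alpha\restriction\calH_\varphi$ as well. Therefore $\sigma_{\rm ac}(A)=\sigma_{\rm ac}(A\restriction\calH_\varphi)\cup\sigma_{\rm ac}(B\restriction\calH_\varphi^\perp)$ and likewise for $B$, so it suffices to compare the ac spectra inside $\calH_\varphi$.

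On $\calH_\varphi$, $B\restriction\calH_\varphi$ and $A_\alpha\restriction\calH_\varphi$ are unitarily equivalent to multiplication by $\lambda$ on $L^2(\bbR;d\mu_0)$, resp.\ $L^2(\bbR;d\mu_\alpha)$, with $\mu_0,\mu_\alpha$ as in \eqref{A.18}. The absolutely continuous part of $\mu_\beta$ has Lebesgue density $\pi^{-1}\Ima F_\beta(\lambda+i0)$ for a.e.\ $\lambda$ (standard boundary-value theory for Borel/Cauchy transforms of measures, as in \cite{AD56}, \cite{Ar57}). From \eqref{A.8} one computes the pointwise identity
\[
\Ima F_\alpha(\lambda+i0)=\f{\Ima F_0(\lambda+i0)}{\abs{1+\alpha F_0(\lambda+i0)}^2}
\quad\text{for a.e.\ }\lambda\in\bbR,
\]
using that $G_\alpha(\lambda+i0)=1+\alpha F_0(\lambda+i0)$ exists, is finite, and is nonzero for a.e.\ $\lambda$ (since $G_\alpha$ or $-G_\alpha$ is Herglotz and so is its reciprocal). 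Consequently $\{\lambda:\Ima F_\alpha(\lambda+i0)>0\}$ and $\{\lambda:\Ima F_0(\lambda+i0)>0\}$ agree up to a Lebesgue-null set, so the two operators have the same ac part on $\calH_\varphi$, whence $\sigma_{\rm ac}(A\restriction\calH_\varphi)=\sigma_{\rm ac}(B\restriction\calH_\varphi)$ and therefore $\sigma_{\rm ac}(A)=\sigma_{\rm ac}(B)$.

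For the statement about $\xi$, recall $\xi(\lambda;A,B)=\xi_\alpha(\lambda)=\pi^{-1}\lim_{\veps\downarrow 0}\arg G_\alpha(\lambda+i\veps)$ by \eqref{A.11}, and $0\leq\pm\xi_\alpha\leq 1$ when $0<\pm\alpha$ by \eqref{A.12}. Since $\sigma_{\rm ac}(B)$ is closed, its complement is a countable union of open intervals on each of which $\mu_{0,\rm ac}$ vanishes, so $\Ima F_0(\lambda+i0)=0$ for a.e.\ $\lambda\in\bbR\backslash\sigma_{\rm ac}(B)$; hence $\Ima G_\alpha(\lambda+i0)=\alpha\,\Ima F_0(\lambda+i0)=0$ there. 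As $G_\alpha(\lambda+i0)$ is a finite nonzero real number for such a.e.\ $\lambda$, its argument is $0$ or $\pi$, so $\xi_\alpha(\lambda)\in\{-1,0,1\}$ for a.e.\ $\lambda\in\bbR\backslash\sigma_{\rm ac}(B)$, as claimed.

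The main obstacle is really the only nontrivial analytic point: the a.e.\ existence, finiteness, and non-vanishing of the boundary value $G_\alpha(\lambda+i0)$ that is needed to justify both the displayed density formula and the $\arg$-dichotomy. This is classical Herglotz/Fatou theory (the function $\mp 1/G_\alpha$ is again Herglotz, and Herglotz functions have finite boundary values a.e.), and it is implicitly already used in \eqref{A.11}; the remaining steps are just bookkeeping with the cyclic-subspace decomposition and the resolvent identity \eqref{A.8}--\eqref{A.9}.
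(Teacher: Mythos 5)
Your proof is correct and takes essentially the same route as the paper's (very terse) argument: the a.c.\ invariance via the identity $\Ima F_\alpha(\lambda+i0)=\Ima F_0(\lambda+i0)/\abs{1+\alpha F_0(\lambda+i0)}^2$ coming from \eqref{A.8}, and the integrality of $\xi$ off $\sigma_\ac(B)$ from the reality a.e.\ of $G_\alpha(\lambda+i0)$ there together with \eqref{A.11}--\eqref{A.12}. You merely make explicit the cyclic-subspace reduction and the a.e.\ non-vanishing of the boundary values, which the paper leaves as "the usual manner."
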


\begin{proof}
$\xi(\lambda;A,B)\in\{-1, 0, 1\}$ follows from \eqref{A.5}--\eqref{A.7}, \eqref{A.11}, and
\eqref{A.12}. $\sigma_{\rm ac}(A) = \sigma_{\rm ac}(B)$ follows in the usual manner by
computing the normal boundary values to the real axis of the imaginary part of $F_\alpha$
in terms of that of $F_0$ using \eqref{A.8}.
\end{proof}

\begin{lemma} \lb{lA.9}
Let $A,B$ be bounded self-adjoint operators with $X=(A-B)\in\calI_1$. Then
for  a.e.\ $\lambda\in\bbR\backslash \sigma_{\rm ac}(B)$ one has
$$
\lim_{\varepsilon\downarrow 0}
\det (I + X(B-\lambda-i \varepsilon)^{-1}) \in \bbR.
$$
\end{lemma}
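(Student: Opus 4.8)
The plan is to read the boundary behaviour of the perturbation determinant straight off Krein's trace formula. Since $(A-z)(B-z)^{-1}=I+X(B-z)^{-1}$ for $X=(A-B)\in\calI_1$, Theorem~\ref{TA.5}\,(iii) gives, for $z\in\bbC_+$,
\[
\det\bigl(I+X(B-z)^{-1}\bigr)=\exp\Bigl(\int_{\bbR}(\mu-z)^{-1}\xi(\mu;A,B)\,d\mu\Bigr).
\]
Decomposing $\xi=\xi_+-\xi_-$ into positive and negative parts (both in $L^1(\bbR)$), the integral on the right is a difference of two Herglotz functions, so it has finite nontangential---in particular vertical---boundary values for a.e.\ $\lambda\in\bbR$; hence so does the determinant. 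For $z=\lambda+i\veps$ the imaginary part of the integral is $\pi$ times the Poisson integral of $\xi(\dott;A,B)$, which converges to $\pi\,\xi(\lambda;A,B)$ at every Lebesgue point of $\xi(\dott;A,B)$. Thus for a.e.\ $\lambda$ the boundary value of the determinant equals $e^{h(\lambda)}e^{i\pi\xi(\lambda;A,B)}$ with $h(\lambda)\in\bbR$, and it is real precisely when $\xi(\lambda;A,B)\in\bbZ$.

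So the real content to establish is that $\xi(\lambda;A,B)\in\bbZ$ for a.e.\ $\lambda\in\bbR\setminus\sigma_{\rm ac}(B)$. First I would treat the finite-rank case: writing $A=A_0,A_1,\dots,A_n=B$ with each $A_{j+1}-A_j$ of rank one as in the proof of Theorem~\ref{TA.4}, Lemma~\ref{lA.8} gives $\sigma_{\rm ac}(A_j)=\sigma_{\rm ac}(B)$ for all $j$ and $\xi(\dott;A_j,A_{j+1})\in\{-1,0,1\}$ a.e.\ on $\bbR\setminus\sigma_{\rm ac}(B)$, so by the additivity \eqref{A.34} the sum $\xi(\dott;A,B)$ is a.e.\ integer-valued there. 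For general $X\in\calI_1$ I would approximate by finite-rank $X_k$ with $\|X_k-X\|_{\calI_1}\to0$; by \eqref{A.35}, $\xi(\dott;B+X_k,B)\to\xi(\dott;A,B)$ in $L^1(\bbR;d\lambda)$, hence pointwise a.e.\ along a subsequence. Since each $\xi(\dott;B+X_k,B)$ is a.e.\ integer-valued on $\bbR\setminus\sigma_{\rm ac}(B)$ (the finite-rank case applied to the pair $(B+X_k,B)$) and a convergent sequence of integers is eventually constant, discarding a countable union of null sets yields $\xi(\lambda;A,B)\in\bbZ$ for a.e.\ $\lambda\in\bbR\setminus\sigma_{\rm ac}(B)$, which finishes the argument.

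The routine inputs---existence of boundary values of Herglotz functions and of Poisson integrals of $L^1$-functions---are exactly the tools already invoked around \eqref{A.14} and \eqref{1.9}, so I do not expect difficulty there. The one point that needs care, and which is really the crux, is the integrality of the Krein SSF off $\sigma_{\rm ac}(B)$: it hinges on Lemma~\ref{lA.8} for rank-one perturbations, on the chain rule \eqref{A.34} to pass to finite rank, and on the elementary observation that an $L^1$-limit of functions that are a.e.\ integer-valued on a fixed measurable set is again a.e.\ integer-valued there. I would make sure the subsequence and null-set bookkeeping in that last step is written out cleanly.
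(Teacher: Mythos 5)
Your proposal is correct and follows essentially the same route as the paper: reduce via \eqref{A.31} to showing $\xi(\dott;A,B)\in\bbZ$ a.e.\ off $\sigma_{\rm ac}(B)$, then obtain this from Lemma~\ref{lA.8} by decomposing $X$ into rank-one increments and passing to the $L^1$-limit (the paper writes this as the $L^1$-convergent series \eqref{A.36a} of rank-one terms, whose partial sums are exactly your finite-rank approximants). Your extra care about the a.e.-convergent subsequence and the Poisson-integral boundary values only makes explicit what the paper leaves implicit.
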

\begin{proof}
By \eqref{A.31}, it suffices to prove that
\begin{equation}
\xi(\dott;A,B)\in\bbZ \, \text{ a.e.\ on $\bbR\backslash \sigma_{\rm ac}(B)$.}  \lb{A.37}
\end{equation}
Introducing
$$
X= \sum_{n=1}^\infty x_n (\phi_n, \dott)\phi_n, \quad  X_0=0, \;
X_N = \sum_{n=1}^N x_n (\phi_n, \dott)\phi_n, \; N\in\bbN,
$$
the rank-by-rank construction of $\xi(\dott;A,B)$ alluded to in the proof of
Theorem~\ref{TA.5} yields the $L^1(\bbR; d\lambda)$-convergent series
\begin{equation}
\xi(\dott;A,B) = \sum_{n=1}^\infty \xi(\dott;B+X_n,B+X_{n-1}).  \lb{A.36a}
\end{equation}
By Lemma~\ref{lA.8}, each term in the above series is
integer-valued a.e.\ on $\bbR\backslash \sigma_{\rm ac}(B)$ and hence so is the
left-hand side of \eqref{A.36a}, which yields \eqref{A.37}.
\end{proof}

\begin{lemma} \lb{lA.10}
Let $A,B$ be bounded self-adjoint operators in the Hilbert space $\calH$ with
$X=(A-B)\in\calI_1$ and $\varphi\in\calH$, $\|\varphi\|=1$. Denote
$P_{\varphi}=(\varphi, \dott)\varphi$. Then,
\begin{equation}
1 - (\varphi, (B-z)^{-1} \varphi) = \frac{\det (I - (X + P_{\varphi}) (A-z)^{-1})}
{\det (I - X(A-z)^{-1})}, \quad z\in\bbC_+.  \lb{A.38}
\end{equation}
\end{lemma}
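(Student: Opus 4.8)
The plan is to turn the quotient of Fredholm determinants on the right-hand side of \eqref{A.38} into a single rank-one determinant, using the multiplicativity $\det((I+X_1)(I+X_2))=\det(I+X_1)\det(I+X_2)$ for $X_1,X_2\in\calI_1$ (the same device used in the proof of Theorem~\ref{TA.4}), and then to evaluate that determinant explicitly because $P_\varphi$ is rank one.

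First I would record, for $z\in\bbC_+$, the elementary algebraic identities that follow from $B=A-X$:
\begin{gather*}
(B-P_\varphi-z)(A-z)^{-1}=I-(X+P_\varphi)(A-z)^{-1},\\
(B-z)(A-z)^{-1}=I-X(A-z)^{-1},\\
(B-P_\varphi-z)(B-z)^{-1}=I-P_\varphi(B-z)^{-1}.
\end{gather*}
Since $X\in\calI_1$ and $P_\varphi$ is rank one, the operators $X(A-z)^{-1}$, $(X+P_\varphi)(A-z)^{-1}$, and $P_\varphi(B-z)^{-1}$ all lie in $\calI_1$, so every determinant appearing below is well-defined; moreover $\det(I-X(A-z)^{-1})=\det((B-z)(A-z)^{-1})\neq0$ because $(B-z)(A-z)^{-1}$ is boundedly invertible (here we use $z\in\bbC_+$, so $z\notin\sigma(A)\cup\sigma(B)$), so the quotient in \eqref{A.38} makes sense.

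Next I would factor $(B-P_\varphi-z)(A-z)^{-1}=\bigl[(B-P_\varphi-z)(B-z)^{-1}\bigr]\bigl[(B-z)(A-z)^{-1}\bigr]$, a product of two operators in $I+\calI_1$, and apply multiplicativity of the Fredholm determinant together with the three identities above to get
\[
\det\bigl(I-(X+P_\varphi)(A-z)^{-1}\bigr)=\det\bigl(I-P_\varphi(B-z)^{-1}\bigr)\,\det\bigl(I-X(A-z)^{-1}\bigr).
\]
Dividing, the right-hand side of \eqref{A.38} equals $\det(I-P_\varphi(B-z)^{-1})$; and since $P_\varphi(B-z)^{-1}$ is rank one, $\det(I-P_\varphi(B-z)^{-1})=1-\tr(P_\varphi(B-z)^{-1})=1-(\varphi,(B-z)^{-1}\varphi)$, which is the left-hand side of \eqref{A.38}. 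The only steps needing any care are the resolvent bookkeeping in the displayed identities and the non-vanishing of the denominator; the rank-one evaluation $\det(I+K)=1+\tr K$ and the multiplicativity of $\det(\dott)$ on $I+\calI_1$ are standard (see \cite[Ch.\ 3]{STI}), so there is no real obstacle here beyond keeping the algebra straight.
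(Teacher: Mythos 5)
Your proposal is correct and follows essentially the same route as the paper: the paper's proof is precisely the factorization $I-P_\varphi(B-z)^{-1}=(B-P_\varphi-z)(A-z)^{-1}\,[(B-z)(A-z)^{-1}]^{-1}=[I-(X+P_\varphi)(A-z)^{-1}][I-X(A-z)^{-1}]^{-1}$ followed by taking determinants and evaluating the rank-one determinant. Your version merely writes the same identity multiplicatively rather than with an inverse, and adds the (correct, and worth noting) remarks that all perturbations are trace class and that the denominator is nonzero since $z\in\bbC_+$ lies off $\sigma(A)\cup\sigma(B)$.
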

\begin{proof}
One computes
\begin{align}
I - P_{\varphi} (B-z)^{-1} & = (B - P_{\varphi} -z)(A-z)^{-1} (A-z) (B-z)^{-1}  \no \\
& = (B - P_{\varphi} -z)(A-z)^{-1} [(B-z)(A-z)^{-1}]^{-1}   \no \\
& = [I - (X+P_{\varphi})(A-z)^{-1}] [I - X (A-z)^{-1}]^{-1}.    \lb{A.41}
\end{align}
Taking determinants in \eqref{A.41} then yields
\begin{align*}
 \frac{\det (I - (X + P_{\varphi})(A-z)^{-1})}{\det (I - X (A-z)^{-1})}
 & = \det(I - P_{\varphi}(B-z)^{-1} )  \\
 & = 1 - (\varphi, (B-z)^{-1} \varphi). \qedhere
\end{align*}
\end{proof}

\begin{theorem} \lb{tA.11}
Let $A,B$ be bounded self-adjoint operators in the Hilbert space $\calH$ with
$(A-B)\in\calI_1$. Then,
$$
\sigma_{\rm ac}(A) = \sigma_{\rm ac}(B).
$$
\end{theorem}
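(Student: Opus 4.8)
The plan is to reduce the claim to a statement about nontangential boundary values of the diagonal resolvent elements $F_{B,\varphi}(z)=(\varphi,(B-z)^{-1}\varphi)$, $z\in\bbC_+$, and then feed the determinant identity of Lemma~\ref{lA.10} together with the integrality of the KrSSF off the a.c.\ spectrum (Lemma~\ref{lA.9}). Recall that, up to Lebesgue-null sets, the a.c.\ part of the spectral measure $\mu_{B,\varphi}$ has density $\tfrac1\pi\Ima F_{B,\varphi}(\lambda+i0)$, and that $\sigma_{\rm ac}(B)$ meets an open set $U$ if and only if $\mu_{B,\varphi}(U)>0$ for some unit vector $\varphi$. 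Since $\bbR\setminus\sigma_{\rm ac}(A)$ is open, it therefore suffices to show that for \emph{every} unit vector $\varphi$ one has $\Ima F_{B,\varphi}(\lambda+i0)=0$ for a.e.\ $\lambda\notin\sigma_{\rm ac}(A)$; this yields $\sigma_{\rm ac}(B)\subseteq\sigma_{\rm ac}(A)$, and interchanging the roles of $A$ and $B$ (the hypothesis $(A-B)\in\calI_1$ is symmetric) gives equality.

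Fix a unit vector $\varphi$, put $X=A-B$ and $P_\varphi=(\varphi,\dott)\varphi$. Using $I-X(A-z)^{-1}=(B-z)(A-z)^{-1}$ and $I-(X+P_\varphi)(A-z)^{-1}=(B-P_\varphi-z)(A-z)^{-1}$, the identity \eqref{A.38} of Lemma~\ref{lA.10} becomes
\[
1-F_{B,\varphi}(z)=\frac{\det\big((B-P_\varphi-z)(A-z)^{-1}\big)}{\det\big((B-z)(A-z)^{-1}\big)},\qquad z\in\bbC_+ .
\]
The numerator and denominator are perturbation determinants of pairs with base operator $A$ and trace-class difference, namely $(B-P_\varphi)-A=-(X+P_\varphi)\in\calI_1$ and $B-A=-X\in\calI_1$, respectively. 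Hence Lemma~\ref{lA.9}, applied with the operator pairs $(B-P_\varphi,A)$ and $(B,A)$, shows that each of them has a \emph{real} nontangential boundary value as $z\to\lambda$ for a.e.\ $\lambda\notin\sigma_{\rm ac}(A)$.

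Next I would check that the denominator's boundary value is nonzero a.e.\ on $\bbR$, so that the ratio is genuinely real. Writing $\xi=\xi(\dott;B,A)\in L^1(\bbR;d\lambda)$ and using \eqref{A.31},
\[
\log\det\big((B-z)(A-z)^{-1}\big)=\int_{\bbR}\frac{\xi(\mu)}{\mu-z}\,d\mu=h_+(z)-h_-(z),
\]
where $h_\pm(z)=\int_{\bbR}(\mu-z)^{-1}\xi_\pm(\mu)\,d\mu$ and $\xi_\pm$ are the positive and negative parts of $\xi$. Each $h_\pm$ is a Herglotz function (its representing measure $\xi_\pm(\mu)\,d\mu$ is finite and nonnegative), hence has a finite nontangential boundary value a.e.; thus $\log\det((B-\lambda-i0)(A-\lambda-i0)^{-1})$ is finite a.e., so the denominator is nonzero a.e.\ on $\bbR$. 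Combining, for a.e.\ $\lambda\notin\sigma_{\rm ac}(A)$ the displayed ratio has a finite real limit, so $F_{B,\varphi}(\lambda+i0)\in\bbR$ there, i.e.\ $\Ima F_{B,\varphi}(\lambda+i0)=0$ for a.e.\ $\lambda\notin\sigma_{\rm ac}(A)$, which is what was needed; running the same argument with $A$ and $B$ swapped completes the proof.

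The genuinely substantive input is already packaged into the preceding lemmas: the integrality $\xi(\dott;A,B)\in\bbZ$ a.e.\ off $\sigma_{\rm ac}(B)$ from Lemma~\ref{lA.9}, which rests on the rank-one analysis of Lemma~\ref{lA.8} and the $\calI_1$-convergent rank-by-rank decomposition \eqref{A.36a}. Within the proof of Theorem~\ref{tA.11} itself the only points requiring care are bookkeeping ones: verifying that Lemma~\ref{lA.9} is invoked with the correct base operator ($A$, not $B$) in both places so that the common exceptional set is $\sigma_{\rm ac}(A)$, and supplying the short Herglotz-splitting argument above to rule out the boundary value of the denominator vanishing on a set of positive measure. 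I expect that last point to be the main (small) obstacle, since Lemma~\ref{lA.9} by itself only gives reality, not nonvanishing, of the boundary determinants.
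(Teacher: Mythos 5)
Your proof is correct and follows essentially the same route as the paper's: it combines the determinant identity of Lemma~\ref{lA.10} with the reality of boundary values of perturbation determinants off the a.c.\ spectrum (Lemma~\ref{lA.9}, applied with base operator $A$ to the pairs $(B,A)$ and $(B-P_\varphi,A)$) to conclude that $\Ima\,(\varphi,(B-\lambda-i0)^{-1}\varphi)=0$ for a.e.\ $\lambda\notin\sigma_{\rm ac}(A)$, hence $\sigma_{\rm ac}(B)\subseteq\sigma_{\rm ac}(A)$, with equality by symmetry. The only differences are organizational (you argue directly rather than by contradiction on a set $\calE\subseteq\sigma_{\rm ac}(B)$ of positive measure) and your added verification, via the Herglotz splitting of $\int_{\bbR}(\mu-z)^{-1}\xi(\mu)\,d\mu$, that the denominator's boundary value is finite and nonzero a.e.---a point the paper leaves implicit.
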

\begin{proof}
By symmetry between $A$ and $B$, it suffices to prove $\sigma_\ac(B) \subseteq \sigma_\ac(A)$. 
Suppose to the contrary that there exists a set $\calE \subseteq \sigma_\ac(B)$ such that 
$\abs{\calE}>0$ and $\calE \cap \sigma_\ac(A) = \emptyset$. Choose an element 
$\varphi\in\calH$ such that $\lim_{\varepsilon\downarrow 0} \Ima ( (\varphi, (B-\lambda - 
i \varepsilon)^{-1})\varphi)>0$ for a.e.\ $\lambda\in\calE$. Thus, for a.e.\ $\lambda\in\calE$, 
the imaginary part of the limit $z\to \lambda+i0$ of the left-hand side of \eqref{A.38} is 
nonzero. On the other hand, by  Lemma~\ref{lA.9}, the right-hand side of \eqref{A.38} is 
real for a.e.\ $\lambda\in\calE$, a contradiction.
\end{proof}

\begin{remark}
Employing $\det(I-A)={\det}_2(I-A)e^{\tr(A)}$ for $A\in\calI_1$, and using an approximation 
of Hilbert--Schmidt operators by trace class operators in the norm $\|\cdot\|_{\calI_2}$, 
one rewrites \eqref{A.38} in the case where $X=(A-B)\in\calI_2$ as
\begin{align}
& 1 - (\varphi, (B-z)^{-1} \varphi) = \frac{{\det}_2 (I - (X + P_{\varphi}) (A-z)^{-1})}
{{\det}_2 (I - X(A-z)^{-1})} e^{(\varphi, (A-z)^{-1} \varphi)},   \no \\
&  \hspace*{9.3cm}  z\in\bbC_+.  \lb{A.42}
\end{align}

Since in the proof of Theorem~\ref{tA.11} one assumes $\calE\subseteq\sigma_\ac(B)$, 
$\abs{\calE}>0$, and $\calE\cap\sigma_\ac(A)=\emptyset$, one concludes that
$$
(\varphi, (A-\lambda - i 0)^{-1} \varphi) \, \text{ is real-valued for
a.e.\ $\lambda\in\calE$}. 
$$
Moreover, if the boundary values of ${\det}_2 (I - X(A-\lambda -i 0)^{-1})$ exist 
for a.e.\ $\lambda\in \sigma_\ac(B)$, by \eqref{A.42}, so do those of ${\det}_2 
(I - (X + P_{\varphi}) (A-\lambda - i 0)^{-1})$. Hence, if one can assert real-valuedness of
\begin{equation}
\frac{{\det}_2 (I - (X + P_{\varphi}) (A-\lambda - i 0)^{-1})}
{{\det}_2 (I - X(A- \lambda - i 0)^{-1})} \,
\text{ for a.e.\ $\lambda\in\sigma_\ac(B)$},    \lb{A.43}
\end{equation}
using input from some other sources, one can follow the proof of Theorem~\ref{tA.11}
step by step to obtain invariance of the a.c.\ spectrum.

In the special case of Schr\"odinger (and similarly for Jacobi) operators with real-valued
potentials $V\in L^p([0,\infty))$, $p\in [1,2]$, the existence of the boundary values of
${\det}_2 (I - X(A-\lambda -i 0)^{-1})$ is indeed known due to Christ--Kiselev
\cite{CK} (for $p\in[1,2)$ using some heavy machinery) and Killip--Simon \cite{KS} 
(for $p=2$). We will return to this circle of ideas in \cite{GPS}.
\end{remark}

\bigskip

\end{document}